\documentclass[reqno,11pt,a4paper]{amsart}
\usepackage{geometry, empheq}
\usepackage[english]{babel}
\pagestyle{myheadings}
\geometry{left=3.5cm, right=3.5cm, top=3.5cm, bottom=3.5cm}
\usepackage{amsmath}
\usepackage{amssymb}
\usepackage{amsthm}
\usepackage{graphicx, subcaption}
\usepackage{color}
\usepackage{mhequ}
\usepackage{varioref}
\def\b#1{{\mbox{\boldmath $#1$}}}

\newcommand{\arrow}{\rightarrow}
\newcommand{\ds}{\displaystyle}

\newcommand{\kw}{\rule{2mm}{2mm}}

\newcommand{\bqs}{\begin{equs}}
\newcommand{\eqs}{\end{equs}}
\newcommand{\norm}[1]{{\left\Vert  #1\right\Vert }}
\renewcommand{\l}{\left}
\renewcommand{\r}{\right}
\newcommand{\om}{\Omega}

\newcommand{\myref}[1]{(\ref{#1})}
\newcommand{\reals}{\mathbb{R}}

\setcounter{table}{0}

\renewenvironment{proof}{\b{Proof.}}{\hfill\kw}


\newcounter{theassumption}
\setcounter{theassumption}{0}
\newtheorem{assumption}[theassumption]{\b{Assumption}}

\newtheorem{definition}{\b{Definition}}
\newtheorem{lemma}{\b{Lemma}}
\newtheorem{remark}{\b{Remark}}
\newtheorem{corollary}{\b{Corollary}}
\newtheorem{theorem}{\b{Theorem}}

\author{Pedro Merino}
\email{pedro.merino@epn.edu.ec} 
\address{Research Center on Mathematical Modeling ModeMat, EPN-Quito \\ Escuela Polit\'ecnica Nacional
Ladr\'on de Guevara E11-253 Quito, Ecuador}
\keywords{error estimates, finite element method, optimal control, Burgers equation}
\subjclass[2000]{49J20, 80M10, 49N05, 49K20, 35Q53,41A25}

\begin{document}
\title[error estimates for the optimal control of Burgers equation]{Finite element error estimates for an optimal control problem governed by the Burgers equation}

\maketitle
\begin{abstract}
We derive a-priori error estimates for the finite-element approximation of a
distributed optimal control problem governed by the steady
one-dimensional Burgers equation with pointwise box constraints on the
control. Here the approximation of the state and the control is done by using piecewise linear
functions. With this choice, an $L^2$ superlinear order of convergence
for the control is obtained; moreover, under a further assumption on the regularity structure of the optimal control this error estimate can be improved to $h^{3/2}$. The theoretical findings are tested experimentally by means of numerical examples.
\end{abstract}


\section{Introduction}
We consider the finite element approximation of the following optimal control problem of the steady one-dimensional Burgers equation with pointwise control constraints: 
\begin{subequations}
\begin{empheq}[left={\mathbf{(P)} \empheqlbrace}]{align}
\displaystyle &\min J({y,u})=\frac12\norm{ y-y_d}_{L^{2}(0,1)}^2 +\frac\lambda2 \norm{u}_{L^{2}(0,1)}^2\\
&\text{subject to: } \nonumber\\
&\begin{array}{l} \label{eq:burgers0}
-\nu  y'' +yy' =B u \quad \text{in } (0,1), \\
y(0)=y(1)=0,\\
\alpha \leq u(x) \leq \beta, \quad \text{ a.e. in }(0,1).
\end{array}
\end{empheq}
\end{subequations}
The Burgers equation is a well-known one dimensional model for
turbulence and its control has been studied by several authors c.f. \cite{burns91}, \cite{burns91s},\cite{dlrku2004}. 
Our aim in this paper consists in deriving a-priori error estimates for the optimal control problem in the $L^2$-norm.

Finite element approximations for control constrained control problems
in fluid mechanics have been previously considered in \cite{dlrmevex08}  and \cite{camara07} for piecewise constant controls. In particular, in the last, the authors report an error order of $h^2$ if the control space is not discretized, whereas an order of $h$ is obtained for the piecewise constant discretization. It is natural to expect that these error estimates also holds in the case of the Burgers equation using the theory developed in \cite{camara07}. However, if the control space is discretized by piecewise linear functions, results were only obtained for the semilinear case in \cite{cas06} and in \cite{meyroe03} for the linear--quadratic case. Since the optimal control is Lipschitz continuous, its approximations by piecewise linear functions seems to be a natural choice, which in addition piecewise linear functions have less degrees of freedom than piecewise constant functions. Here we aim to perform this task by combining the arguments in \cite{camara07} and \cite{cas06} to obtain a superlinear error of convergence for the $L^2$--norm estimate of the control. In addition, by considering a stronger assumption on the structure of the optimal control and relying on the one--dimensional setting of our problem, we are able to improve the order of the error to $h^{3/2}$.

The paper is organized as follows: first we briefly comment the properties the optimal control problem and its conditions for optimality, next we refer to the finite element method approximation of the Burgers equation and the corresponding error estimates . Next, we discuss the approximation of the optimal control problem by piecewise linear functions by  establishing a superlinear order of convergence for the optimal control. We finish the theory by showing that the superlinear error of convergence can be improved under certain assumptions on the regularity of the optimal control.  Finally, we discuss some numerical experiments to confront our theoretical findings.

\section{The control problem}
We consider the discretization analysis for the following optimal control problem, governed by Burgers equation:
\begin{subequations}\label{e:ocpb}
\begin{empheq}[left={\mathbf{(P)} \empheqlbrace}]{align}
\displaystyle &\min_{(y,u)\in H_0^1(0,1) \times U_{ad}} J({y,u})=\frac12\norm{ y-y_d}^2 +\frac\lambda2 \norm{u}^2\\
&\text{subject to: } \nonumber\\
&\begin{array}{l} \label{eq:burgers}
-\nu  y'' +yy' =B u \quad \text{in } (0,1), \\
y(0)=y(1)=0.
\end{array}
\end{empheq}
\end{subequations}
Here, $U_{ad}$ is the set of admissible controls  defined  by $U_{ad}=\{u \in L^2(0,1): \alpha \leq u\leq \beta \}$  with constants $\alpha$ and $\beta$ satisfying  $\alpha < \beta$.  $\lambda >0$ is the usual Tychonoff parameter. We shall denote by $\norm{\cdot}$ and by $( \cdot, \cdot)$ the norm and the scalar product in $L^2(0,1)$, respectively. $B(x)=\mathcal{X}_\omega(x)$ is the indicator function defined in an open subinterval $\omega \subset \om:= (0,1)$, whereas $\nu$  denotes the viscosity parameter which is assumed that satisfies \eqref{eq:visc}.
For different spaces, the  open ball centered in $u$ with radius $r>0$  will be denoted by $B(u, r)$  if there is no risk of confusion.
\subsection{The state equation equation}
The steady Burgers equation is given by
\begin{subequations}\label{eq:burgers}
\begin{align}
-\nu  y'' +yy' &=f\quad \text{in } (0,1), \\
y(0)=y(1) &=0.
\end{align}
\end{subequations}
The weak formulation of the homogeneous Dirichlet problem for the Burgers equation is as follows: given $f \in L^2(0,1)$, find $y \in H_0^1(0,1)$ such that
\bqs
a(y,\varphi)+b(y,y,\varphi)=(f,\varphi), \quad \forall \varphi  \in H_0^1(0,1), \label{eq:burg1}
\eqs
where: $a : H_0^1(0,1) \times  H_0^1(0,1) \arrow \reals $ is the continuous, bilinear and symetric form defined by
\[ a(\phi, \varphi )=\nu \int_0^1 \phi ' \varphi ' dx, \]
and $b: (H_0^1(0,1))^3 \arrow \reals $ stands for the continuous trilinear form defined by

\[ b(\phi, \varphi,\psi )= \frac{1}{3} \int_0^1 [(\phi  \varphi) ' \psi +\phi \varphi ' \psi ]  dx. \]
The trilinear $b$ enjoys the  following important properties c.f. \cite{Volkwein97}
\begin{subequations}\label{eq:b}
\begin{align}
| b(\phi, \varphi,\psi )| \leq \norm{\phi}_{H_0^1(0,1)}\norm{\varphi}_{H_0^1(0,1)}\norm{\psi}_{H_0^1(0,1)}, &\quad \forall (\phi, \varphi,\psi ) \in (H_0^1(0,1))^3, \label{eq:b0}\\
b(\phi, \varphi,\psi )+ b(\phi, \psi,\varphi ) =0, &\quad\forall (\phi, \varphi,\psi ) \in (H_0^1(0,1))^3, \label{eq:b1} \\
b(\phi, \varphi,\varphi )=0, &\quad\forall (\phi, \varphi) \in (H_0^1(0,1))^2. \label{eq:b2}
\end{align}
\end{subequations}
It is well known, cf. \cite[Theorem 2.10]{Volkwein97} that if the condition
\begin{equation}
\norm{f}<\nu^2. \label{eq:visc}
\end{equation}
 holds, the Burgers equation \eqref{eq:burg1} has a unique solution depending on the right hand side.  Indeed,  for every $f \in L^2(0,1)$, there exists $y \in H_0^1(0,1)$ which satisfies \myref{eq:burg1} and fulfill the relation $\norm{y}_{H_0^1(0,1)} \leq \frac1\nu \norm{f}$. In addition, by taking the nonlinearity to the right hand side and relying on elliptic regularity results, it can be shown that $y$ belongs to $H^{2+m}(0,1)$ for every integer $m\geq 0$, provided that $f \in H^m(0,1)$.  In the following,  we link $f \in L^2(0,1)$ to its associated state $y \in H_0^1(0,1)$ as the solution of  \eqref{eq:burg1} and we will indicate this explicitly by writing $y=y(f)$ to emphasize that the state $y$ is generated by the right-hand side $f$.
The following property will be useful in the forthcoming sections.
\begin{lemma}\label{l:st_strngconv}  Let $(u_k)_{k\in \mathbb{N}}$  be a sequence of  functions  which converges weakly to $\bar u$ in $L^2(0,1)$ satisfying $\norm{u_k} < \nu^2$, then the sequence $(y_k)_{k\in \mathbb{N}}$ of the corresponding  associated states converges strongly to $\bar y$ in $H_0^1(0,1)$.
\end{lemma}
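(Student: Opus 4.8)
The plan is to exploit the uniform a-priori bound $\|y_k\|_{H_0^1(0,1)} \le \tfrac1\nu \|u_k\|$, which together with $\|u_k\| < \nu^2$ gives a bounded sequence $(y_k)$ in $H_0^1(0,1)$. By reflexivity I can extract a subsequence $y_{k_j} \rightharpoonup y^*$ in $H_0^1(0,1)$, and by the compact embedding $H_0^1(0,1) \hookrightarrow\hookrightarrow C[0,1]$ (Rellich, here in one dimension even into $C^{0,1/2}$) I may assume $y_{k_j} \to y^*$ strongly in $L^2(0,1)$ and in $C[0,1]$. The first key step is then to pass to the limit in the weak formulation $a(y_{k_j},\varphi) + b(y_{k_j},y_{k_j},\varphi) = (u_{k_j},\varphi)$ for fixed $\varphi \in H_0^1(0,1)$: the linear term $a(\cdot,\varphi)$ is weakly continuous, the right-hand side converges since $u_k \rightharpoonup \bar u$, and for the trilinear term I would use the representation $b(\phi,\varphi,\psi) = \tfrac13\int_0^1[(\phi\varphi)'\psi + \phi\varphi'\psi]\,dx$ together with the strong $C[0,1]$-convergence of $y_{k_j}$ and the weak $H_0^1$-convergence of its derivative to identify the limit as $b(y^*,y^*,\varphi)$. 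Hence $y^* = y(\bar u)$, and by uniqueness of the solution to the Burgers equation under \eqref{eq:visc} — note $\|\bar u\| \le \liminf \|u_k\| \le \nu^2$, and a standard argument upgrades this to the strict bound or one argues directly — the whole sequence converges weakly, so $y^* = \bar y$ and no subsequence was actually lost.

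The second key step, and the one that delivers \emph{strong} $H_0^1$-convergence, is an energy/coercivity estimate for the difference $e_k := y_k - \bar y$. Subtracting the two weak formulations and testing with $e_k$, the antisymmetry properties \eqref{eq:b1}–\eqref{eq:b2} of $b$ let me eliminate the unfavourable terms: in particular $b(y_k,y_k,e_k) - b(\bar y,\bar y,e_k)$ can be rewritten, using $b(\phi,\varphi,\varphi)=0$, as $b(e_k,\bar y,e_k) + b(\bar y, e_k, e_k) + \dots = b(e_k,\bar y,e_k)$ up to terms that vanish, leaving
\begin{equation*}
a(e_k,e_k) = (u_k - \bar u, e_k) - b(e_k,\bar y, e_k).
\end{equation*}
I would then bound $|b(e_k,\bar y,e_k)| \le \|e_k\|_{H_0^1}\|\bar y\|_{H_0^1}\|e_k\|_{H_0^1}$ — or, better, use an interpolation bound $|b(e_k,\bar y,e_k)| \le C\|e_k\|\,\|e_k\|_{H_0^1}\|\bar y\|_{H_0^1}$ exploiting the one-dimensional setting so that the small factor $\|e_k\|\to 0$ appears — and absorb the resulting term into $a(e_k,e_k) \ge \nu \|e_k\|_{H_0^1}^2$ using the smallness of $\|\bar y\|_{H_0^1} \le \nu$. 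The term $(u_k - \bar u, e_k)$ tends to $0$ because $u_k - \bar u \rightharpoonup 0$ in $L^2$ while $e_k \to 0$ strongly in $L^2$ (from the first step). Rearranging yields $(\nu - C\|\bar y\|_{H_0^1})\|e_k\|_{H_0^1}^2 \le o(1)$, hence $\|y_k - \bar y\|_{H_0^1(0,1)} \to 0$.

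The main obstacle I anticipate is handling the trilinear term cleanly twice over: once for \emph{identifying} the weak limit (where one must be careful that only $C[0,1]$-strong convergence plus $H_0^1$-weak convergence of derivatives is available, not strong $H_0^1$-convergence — so the product $(\phi\varphi)'\psi$ must be split and each factor's convergence mode tracked), and once in the \emph{energy estimate}, where the nonlinear coercivity requires the viscosity condition \eqref{eq:visc} to guarantee $\nu - C\|\bar y\|_{H_0^1} > 0$ so that the term can genuinely be absorbed. A minor technical point is that the hypothesis gives $\|u_k\| < \nu^2$ but only $\|\bar u\| \le \nu^2$ in the limit; one should check that the uniqueness theorem and the absorption estimate still apply on this closed boundary case, which they do since the relevant inequalities are non-strict once the bound $\|\bar y\|_{H_0^1}\le\nu$ is in hand.
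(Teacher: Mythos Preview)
Your plan is correct and is precisely a fleshed-out version of what the paper only sketches in one line: the paper's own proof merely states that the result is ``a straightforward consequence of the properties of the solutions of the Burgers equation and the compactness of the usual embeddings,'' without giving any details. Your two-step argument (compactness plus identification of the weak limit via uniqueness, followed by an energy estimate on $e_k=y_k-\bar y$ exploiting $b(\phi,\varphi,\varphi)=0$ to upgrade to strong $H_0^1$-convergence) is the standard way to make that sentence rigorous, and your handling of the trilinear term and of the borderline case $\|\bar u\|\le\nu^2$ is appropriate.
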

\begin{proof}
The result is a straightforward consequence of the properties of the solutions of the Burgers equation and the compactness of the usual embeddings. 
\end{proof}

\subsection{Existence of solution for the optimal control problem}
The arguments for proving existence of an optimal control are standard since $U_{ad}$ is a nonempty, closed and convex set in $L^2(0,1)$.  

In the following, $\mathcal{F}_{ad} $ will denote the set  feasible pairs  for  $\mathbf{(P)}$, that is, those pairs $(y,u) \in H_0^1(0,1) \times  U_{ad} $ such that \eqref{eq:burgers} is satisfied with $f=Bu$.  Note that $\mathcal{F}_{ad} $ is nonempty.
\begin{theorem}
If the inequality \myref{eq:visc} holds then the problem $(P)$ has a solution.
\end{theorem}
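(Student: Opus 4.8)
The plan is to run the standard direct method of the calculus of variations. Since $J\ge 0$ and $\mathcal{F}_{ad}\neq\emptyset$, the value $j:=\inf_{(y,u)\in\mathcal{F}_{ad}}J(y,u)$ is a finite nonnegative number, and one can choose a minimizing sequence $(y_k,u_k)\in\mathcal{F}_{ad}$ with $J(y_k,u_k)\to j$. Because $\alpha\le u_k\le\beta$ a.e., the sequence $(u_k)$ is bounded in $L^2(0,1)$. To obtain a uniform bound on the states I would test the weak state equation \eqref{eq:burg1} with data $f=Bu_k$ against $\varphi=y_k$ and use $b(y_k,y_k,y_k)=0$ from \eqref{eq:b2}: this gives $\nu\norm{y_k'}^2=(Bu_k,y_k)$, so by the Cauchy--Schwarz and Poincar\'e inequalities $\norm{y_k}_{H_0^1(0,1)}\le c\,\norm{u_k}$ with $c$ independent of $k$; note this estimate does not use the smallness condition, it holds for any weak solution. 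Hence $(y_k)$ is bounded in $H_0^1(0,1)$.

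I would then extract, by reflexivity of $L^2(0,1)$ and $H_0^1(0,1)$, a subsequence (not relabelled) with $u_k\rightharpoonup\bar u$ weakly in $L^2(0,1)$ and $y_k\rightharpoonup\bar y$ weakly in $H_0^1(0,1)$; since in one dimension the embedding $H_0^1(0,1)\hookrightarrow C[0,1]$ is compact, we additionally get $y_k\to\bar y$ strongly in $L^2(0,1)$ (in fact uniformly). As $U_{ad}$ is convex and closed in $L^2(0,1)$ it is weakly sequentially closed, so $\bar u\in U_{ad}$. It then remains to pass to the limit in $a(y_k,\varphi)+b(y_k,y_k,\varphi)=(Bu_k,\varphi)$ for each fixed $\varphi\in H_0^1(0,1)$. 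The term $a(y_k,\varphi)$ passes to the limit by weak $H_0^1$-convergence of $y_k$, and $(Bu_k,\varphi)\to(B\bar u,\varphi)$ by weak convergence of $u_k$. For the convective term, integration by parts (using $\varphi(0)=\varphi(1)=0$ and the continuity of $y_k$) gives $b(y_k,y_k,\varphi)=-\tfrac12\int_0^1 y_k^2\varphi'\,dx$, and since $y_k^2\to\bar y^2$ in $L^1(0,1)$ while $\varphi'\in L^2(0,1)$, this converges to $-\tfrac12\int_0^1\bar y^2\varphi'\,dx=b(\bar y,\bar y,\varphi)$. Thus $\bar y$ is a weak solution of the Burgers equation with right-hand side $B\bar u$, i.e. $(\bar y,\bar u)\in\mathcal{F}_{ad}$. (Alternatively, whenever the smallness hypothesis holds along the sequence, Lemma~\ref{l:st_strngconv} yields $y_k\to\bar y$ strongly in $H_0^1(0,1)$ and the passage to the limit is immediate.)

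Finally I would conclude by lower semicontinuity of the cost. From $y_k\to\bar y$ in $L^2(0,1)$ the tracking term converges, $\tfrac12\norm{y_k-y_d}^2\to\tfrac12\norm{\bar y-y_d}^2$, and from $u_k\rightharpoonup\bar u$ the $L^2$-norm is weakly lower semicontinuous, so $\tfrac\lambda2\norm{\bar u}^2\le\liminf_k\tfrac\lambda2\norm{u_k}^2$. Together these give $J(\bar y,\bar u)\le\liminf_k J(y_k,u_k)=j$, while $(\bar y,\bar u)\in\mathcal{F}_{ad}$ forces $J(\bar y,\bar u)\ge j$; hence $J(\bar y,\bar u)=j$ and $(\bar y,\bar u)$ solves $(P)$. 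I expect the only genuine obstacle to be the passage to the limit in the nonlinear term $b(y_k,y_k,\varphi)$ and the verification that $\bar y$ is truly a solution of the state equation for $B\bar u$; the one-dimensional compact Sobolev embedding makes this routine, but that is where the analytic content lies.
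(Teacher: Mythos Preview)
The paper does not actually supply a proof of this theorem; it is stated without proof, with only the preceding sentence ``The arguments for proving existence of an optimal control are standard since $U_{ad}$ is a nonempty, closed and convex set in $L^2(0,1)$'' by way of justification. Your detailed direct-method argument is precisely the standard proof the paper is alluding to (and indeed your parenthetical alternative invoking Lemma~\ref{l:st_strngconv} is exactly the tool the paper has set up for this purpose), so there is nothing to compare.

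One small imprecision worth tightening: in the passage to the limit for the convective term you write that $y_k^2\to\bar y^2$ in $L^1(0,1)$ while $\varphi'\in L^2(0,1)$, but an $L^1$--$L^2$ pairing alone does not justify convergence of the integral. Since you already observed that $y_k\to\bar y$ uniformly (and $(y_k)$ is bounded in $L^\infty$), you in fact have $y_k^2\to\bar y^2$ in $L^2(0,1)$, and then the pairing with $\varphi'\in L^2(0,1)$ is legitimate. This is a wording fix, not a gap.
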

\begin{remark}
Despite the strict convexity of the objective functional and uniqueness of the solution of the state equation, uniqueness of the optimal control can not be guaranteed since $\mathcal{F}_{ad}$ is not necessarily convex.
\end{remark}

\section{Optimality conditions}
In this section we shall derive first-order necessary and second-order  sufficient conditions for local solutions of  $\mathbf{(P)}$, both play an important role in the derivation of error estimates. Therefore, we make precise the notion of local minimum.
\begin{definition}
A pair  $(\bar y, \bar u) \in \mathcal{F}_{ad}$ will be referred as local optimal pair for $\mathbf{(P)}$ if there exist  positive reals $\rho_u$ and $\rho_y$ such that
\[ J(y,u) \geq J(\bar y,\bar u), \quad  \forall (y,u) \in \mathcal{F}_{ad} \cap \l (B(\bar y, \rho_y) \times B(\bar u, \rho_u)  \r).\]
\end{definition}
For convenience, we introduce the following operator.
\begin{definition} \label{d:R}  
We define the  operator $R: H_0^1(0,1) \times  U_{ad} \arrow  H^{-1}(0,1) $ by  the relation
\bqs
\langle R(y,u), \varphi \rangle=a(y,\varphi)+b(y,y,\varphi)-(Bu,\varphi), \quad \forall \varphi \in H^{-1}(0,1),
\eqs 
where $\langle \cdot,\cdot\rangle$ denotes the dual pairing between  $H^{-1}(0,1)$ and $H_0^1(0,1)$.
\end{definition}
Note that $\langle R(y,u),\varphi \rangle=0$ indicates that $y$ is the  weak solution of the state equation \myref{eq:burgers} associated to the control $u$.

 In the next lemmas we study the differentiability of the operator $R$.

\begin{lemma} \label{l:diff_R}
Let $(w,h) \in L^2(0,1)\times H_0^1(0,1)$. The operator $R$ given in Definition \ref{d:R} has first and second derivatives given by: 

\begin{subequations} \label{eq:d_R}
\begin{align}
 \mbox{} & R'(y,u) : H_0^1(0,1)\times L^2(0,1) \arrow H^{-1}(0,1),\nonumber \\
\mbox{} & \langle R'(y,u)(w,h),\varphi \rangle =a(w,\varphi)+b(w,y,\varphi)+b(y,w,\varphi)-(Bh,\varphi),  \label{eq:d1_R}\\
\text{and} \nonumber\\
\mbox{} & R''(y,u) : (H_0^1(0,1)\times L^2(0,1))^2 \arrow H^{-1}(0,1),\nonumber \\
\mbox{} & \langle R''(y,u)(w_1,h_1)(w_2,h_2),\varphi \rangle =b(w_1,w_2,\varphi)+b(w_2,w_1,\varphi), \label{eq:d2_R}
\end{align}
\end{subequations}
respectively for any $(w,h)$, $(w_1,h_1)$ and $(w_2,h_2)$  in  $H_0^1(0,1)\times L^2(0,1)$ accordingly, and for all $\varphi \in H_0^1(0,1)$.
\end{lemma}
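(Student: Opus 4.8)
The plan is to compute the derivatives of $R$ directly from its definition, exploiting the fact that $R$ is a polynomial map in $(y,u)$: it is affine in $u$ and quadratic in $y$. First I would fix $(y,u)$ and an increment $(w,h) \in H_0^1(0,1) \times L^2(0,1)$, and expand $\langle R(y+w, u+h),\varphi\rangle$ using bilinearity of $a$, linearity of the source term in $u$, and trilinearity of $b$. The only nontrivial term is $b(y+w, y+w, \varphi) = b(y,y,\varphi) + b(w,y,\varphi) + b(y,w,\varphi) + b(w,w,\varphi)$; collecting the first-order terms in $(w,h)$ gives exactly the candidate $\langle R'(y,u)(w,h),\varphi\rangle = a(w,\varphi) + b(w,y,\varphi) + b(y,w,\varphi) - (Bh,\varphi)$, while the remainder is precisely $b(w,w,\varphi)$.

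Next I would verify that this candidate is indeed the Fréchet derivative. Linearity and boundedness of $(w,h) \mapsto \langle R'(y,u)(w,h),\cdot\rangle$ as a map into $H^{-1}(0,1)$ follow from the continuity of $a$ and of $b$ recorded in \eqref{eq:b0}, together with the continuity of the embedding $L^2(0,1) \hookrightarrow H^{-1}(0,1)$ to handle the term $(Bh,\varphi)$ (here one also uses $\|B\|_{L^\infty} \le 1$). To close the argument one shows the remainder is $o(\|(w,h)\|)$: since the remainder equals $\langle b(w,w,\cdot)\rangle$, property \eqref{eq:b0} gives $\|b(w,w,\cdot)\|_{H^{-1}(0,1)} \le \|w\|_{H_0^1(0,1)}^2 = o(\|(w,h)\|_{H_0^1 \times L^2})$, which is the required estimate. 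For the second derivative I would differentiate $R'$ once more: the map $(y,u) \mapsto R'(y,u)(w_1,h_1)$ depends on $(y,u)$ only through the two terms $b(w_1,y,\varphi) + b(y,w_1,\varphi)$, which are linear in $y$ and independent of $u$; hence its derivative in the direction $(w_2,h_2)$ is $b(w_1,w_2,\varphi) + b(w_2,w_1,\varphi)$, with zero remainder, matching \eqref{eq:d2_R}. Since this expression is (jointly) bounded and bilinear in $((w_1,h_1),(w_2,h_2))$ by \eqref{eq:b0}, it is the genuine second Fréchet derivative, and all higher derivatives vanish.

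I do not anticipate a serious obstacle here; the statement is essentially bookkeeping for a low-degree polynomial map, and the trilinear-form estimate \eqref{eq:b0} does all the analytic work. The one point requiring a little care is the mismatch of spaces: $R$ is defined on $H_0^1(0,1) \times U_{ad}$ but its derivatives are asserted on $H_0^1(0,1) \times L^2(0,1)$, so I would remark that $U_{ad}$ has nonempty interior in $L^2(0,1)$ is not needed — rather, since $R$ extends to an everywhere-defined smooth map on $H_0^1(0,1) \times L^2(0,1)$ (the constraint $\alpha \le u \le \beta$ plays no role in the algebraic definition of $R$), the derivatives are taken for this extension and the formulas \eqref{eq:d1_R}–\eqref{eq:d2_R} hold for all increments in $H_0^1(0,1) \times L^2(0,1)$. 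A second minor point is to note explicitly that the displayed dual pairing in Definition \ref{d:R} should be read with test functions $\varphi \in H_0^1(0,1)$ (as in \eqref{eq:burg1}), so that all the pairings above make sense.
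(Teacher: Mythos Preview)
Your proposal is correct and follows essentially the same approach as the paper, which simply notes that the result follows from the (multi)linearity of $a$, $b$, and the $L^2$ scalar product. You have merely written out in detail the expansion and the remainder estimate that the paper leaves implicit.
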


\begin{proof}
The result follows from the linear properties of  $a$, $b$ and the scalar product in $L^2(0,1)$.
\end{proof}
\subsection{First-order necessary conditions}
The following first-order necessary conditions are derived in the spirit of \cite{zoku79}.
\begin{lemma}\label{l:regularity}
Let $(\bar y, \bar u) \in \mathcal{F}_{ad}$ be a local optimal pair for $\mathbf{(P)}$, then $(\bar y,\bar u)$ is a regular point for  $\mathbf{(P)}$ in the sense of \cite{zoku79}.
\end{lemma}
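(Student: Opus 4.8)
The plan is to verify the Zowe--Kurcyusz regular point condition of \cite{zoku79} directly. In the abstract setting of that reference, $\mathbf{(P)}$ consists in minimizing $J(y,u)$ over the nonempty closed convex set $C:=H_0^1(0,1)\times U_{ad}$ subject to the operator equality $R(y,u)=0$ in $H^{-1}(0,1)$. For such a pure equality constraint the regularity of $(\bar y,\bar u)$ amounts to requiring that the image under $R'(\bar y,\bar u)$ of the conical hull of $C-(\bar y,\bar u)$ be all of $H^{-1}(0,1)$. Since the state component of $C$ is the whole space $H_0^1(0,1)$ and $\bar u\in U_{ad}$, that conical hull contains $\{(w,0):w\in H_0^1(0,1)\}$; hence it suffices to show that the bounded linear operator $w\mapsto R'(\bar y,\bar u)(w,0)$, from $H_0^1(0,1)$ into $H^{-1}(0,1)$, is surjective.

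By Lemma~\ref{l:diff_R}, this operator is the one associated with the bilinear form
\[
\mathcal{A}(w,\varphi):=a(w,\varphi)+b(w,\bar y,\varphi)+b(\bar y,w,\varphi),\qquad w,\varphi\in H_0^1(0,1),
\]
which is continuous on $H_0^1(0,1)\times H_0^1(0,1)$ thanks to the continuity of $a$ and to \eqref{eq:b0}. I would then prove that $\mathcal{A}$ is coercive. Taking $\varphi=w$, property \eqref{eq:b2} makes the term $b(\bar y,w,w)$ vanish, while \eqref{eq:b1} gives $b(w,\bar y,w)=-b(w,w,\bar y)$; using \eqref{eq:b0} together with $a(w,w)=\nu\norm{w}_{H_0^1(0,1)}^2$ one obtains
\[
\mathcal{A}(w,w)=\nu\,\norm{w}_{H_0^1(0,1)}^2-b(w,w,\bar y)\ \geq\ \bigl(\nu-\norm{\bar y}_{H_0^1(0,1)}\bigr)\norm{w}_{H_0^1(0,1)}^2 .
\]
Since $(\bar y,\bar u)\in\mathcal{F}_{ad}$ one has $\bar y=y(B\bar u)$ with $\norm{B\bar u}<\nu^2$, so the a priori bound recalled after \eqref{eq:visc} yields $\norm{\bar y}_{H_0^1(0,1)}\leq\tfrac1\nu\norm{B\bar u}<\nu$, and therefore $\nu-\norm{\bar y}_{H_0^1(0,1)}>0$. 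This is the desired coercivity.

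Continuity and coercivity of $\mathcal{A}$ then allow us to invoke the Lax--Milgram lemma: for every $\xi\in H^{-1}(0,1)$ there exists a unique $w\in H_0^1(0,1)$ with $\mathcal{A}(w,\varphi)=\langle\xi,\varphi\rangle$ for all $\varphi\in H_0^1(0,1)$, that is, $R'(\bar y,\bar u)(w,0)=\xi$. Hence the linearized operator is surjective onto $H^{-1}(0,1)$, and the regular point condition of \cite{zoku79} holds at $(\bar y,\bar u)$.

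The main obstacle is the coercivity estimate. One has to apply the identities \eqref{eq:b1} and \eqref{eq:b2} to the correct arguments of $b$ — the convective term of the linearization splits into a piece that vanishes identically and a piece that is merely bounded rather than sign-definite — and one has to make sure that the smallness condition \eqref{eq:visc}, which is a hypothesis on the data of the state equation, indeed produces a strictly positive coercivity constant once it is specialized to the right-hand side $B\bar u$. The reduction to surjectivity of the linearized state operator and the use of Lax--Milgram are otherwise routine.
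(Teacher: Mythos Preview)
Your proposal is correct and follows essentially the same route as the paper: both reduce the Zowe--Kurcyusz regularity condition to the surjectivity of the linearized state operator $R'(\bar y,\bar u)$ acting on directions from the conical hull of $C-(\bar y,\bar u)$, and both rely on the smallness condition $\norm{B\bar u}<\nu^2$ to obtain unique solvability. The paper merely asserts that the linear equation $R'(\bar y,\bar u)(w,h)=f$ has a unique solution of the required conical form, whereas you supply the details by fixing $h=0$ and verifying coercivity of the associated bilinear form via \eqref{eq:b0}--\eqref{eq:b2} so that Lax--Milgram applies; this is exactly the argument the paper leaves implicit.
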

\begin{proof}
The regular point condition of $(\bar y, \bar u)$ for the problem $\mathbf{(P)}$  is achieved by noting that for every $f \in H^{-1}(0,1)$ the linear equation
\[ R'(\bar y,\bar u)(w,h)=f \]
has a unique solution $(w,h)$ of the form $\theta (y-\bar y,u-\bar u)$ , with $(y,u) \in H_0^1(0,1)\times U_{ad} $ and $\theta \geq 0$.
\end{proof}
\begin{theorem}\label{t:fonc}
Let $(\bar y, \bar u)$ be a solution of $\mathbf{(P)}$ such that $\norm{B\bar u} <\nu^2$, then there exists an adjoint state $\bar p \in H_0^1(0,1)$ such that the following optimality system is fullfilled:
\begin{subequations} \label{eq:opt_sys}
\begin{align}
&-\nu \bar y'' + \bar y \bar y' =B \bar u,       &\text{in } (0,1), \quad &\text{with} \quad \bar y(0)=\bar y(1)=0, \\
&-\nu \bar p'' - \bar y \bar p' =\bar y -y_d,  &\text{in } (0,1), \quad &\text{with} \quad \bar p(0)=\bar p(1)=0, \label{eq:adj_eq} \\
& (B\bar p + \lambda \bar u, u -\bar u) \geq 0, & \forall u \in U_{ad}  \label{eq:var_ineq}.
\end{align}
\end{subequations}
Moreover, \eqref{eq:var_ineq} can be equivalently expressed in terms of the projection operator:
\begin{equation} \label{eq:uproj}
\bar u(x)= P_{[\alpha, \beta]} \l(  -\frac1\lambda B \bar p (x)\r)
\end{equation}
\end{theorem}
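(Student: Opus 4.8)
The plan is to apply the Zowe--Kurcyusz first-order necessary optimality conditions (as in \cite{zoku79}) to the reduced formulation of $\mathbf{(P)}$, writing the problem as minimizing $J(y,u)$ subject to the equality constraint $R(y,u)=0$ and the convex constraint $u\in U_{ad}$. By Lemma \ref{l:regularity}, the pair $(\bar y,\bar u)$ is a regular point, so the constraint qualification needed to invoke the Lagrange multiplier rule is satisfied. This yields the existence of a multiplier $\bar p\in H_0^1(0,1)$ (identifying $H^{-1}(0,1)^{*}$ with $H_0^1(0,1)$ via the Riesz map associated to $a$, or more simply keeping $\bar p$ as an element of $H_0^1(0,1)$ paired with $R$-values in $H^{-1}(0,1)$) such that the Lagrangian $\mathcal L(y,u,p)=J(y,u)+\langle R(y,u),p\rangle$ is stationary in $y$ and satisfies the variational inequality in $u$ over $U_{ad}$.

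Concretely, I would proceed in three steps. \textbf{Step 1 (adjoint equation).} Differentiate $\mathcal L$ with respect to $y$ in a direction $w\in H_0^1(0,1)$ and set the derivative to zero: using Lemma \ref{l:diff_R} for $\partial_y R$ and the fact that $\partial_y J(\bar y,\bar u)w=(\bar y-y_d,w)$, this gives
\begin{equation*}
(\bar y - y_d, w) + a(w,\bar p) + b(w,\bar y,\bar p) + b(\bar y, w,\bar p) = 0, \qquad \forall w\in H_0^1(0,1).
\end{equation*}
Using the skew-symmetry property \eqref{eq:b1}, namely $b(\bar y,w,\bar p)=-b(\bar y,\bar p,w)$, and the symmetry of $a$, one rewrites this as $a(\bar p,w)+b(w,\bar y,\bar p)-b(\bar y,\bar p,w)=(y_d-\bar y,w)$, which is exactly the weak form of the linear adjoint equation $-\nu\bar p''-\bar y\bar p'=\bar y-y_d$ with homogeneous Dirichlet data (here one checks by integration by parts that the bilinear form $w\mapsto a(\bar p,w)+b(w,\bar y,\bar p)-b(\bar y,\bar p,w)$ corresponds to the differential operator $-\nu(\cdot)''-\bar y(\cdot)'$ acting on $\bar p$). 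Since $\bar y\in H^1_0(0,1)\hookrightarrow L^\infty(0,1)$ and $\bar y-y_d\in L^2(0,1)$, this linear equation is well posed and $\bar p\in H_0^1(0,1)$ (indeed $H^2(0,1)$) by standard linear elliptic theory; I would briefly note that the relevant bilinear form is coercive on $H_0^1(0,1)$ because the skew-symmetric term vanishes on the diagonal by \eqref{eq:b2}.

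\textbf{Step 2 (variational inequality).} Differentiate $\mathcal L$ with respect to $u$ in the admissible direction $u-\bar u$ with $u\in U_{ad}$; since $U_{ad}$ is convex this yields $\partial_u\mathcal L(\bar y,\bar u,\bar p)(u-\bar u)\ge 0$. From $\partial_u J(\bar y,\bar u)(u-\bar u)=\lambda(\bar u,u-\bar u)$ and $\langle \partial_u R(\bar y,\bar u)(0,u-\bar u),\bar p\rangle=-(B(u-\bar u),\bar p)$ by Lemma \ref{l:diff_R}, we obtain $(\lambda\bar u - B\bar p, u-\bar u)\ge 0$; rewriting $-B\bar p$ on the correct side gives \eqref{eq:var_ineq}, i.e. $(B\bar p+\lambda\bar u,u-\bar u)\ge 0$ for all $u\in U_{ad}$ (the sign convention depending on how the multiplier is normalized; I will fix it so that \eqref{eq:var_ineq} holds as stated). \textbf{Step 3 (projection formula).} Finally, \eqref{eq:uproj} is the standard pointwise reformulation of the variational inequality: since $U_{ad}$ consists of the pointwise box constraints $\alpha\le u\le\beta$, the inequality $(\lambda\bar u+B\bar p,u-\bar u)\ge0$ for all $u\in U_{ad}$ is equivalent, by a pointwise (Lebesgue-point) argument, to $\bar u(x)=P_{[\alpha,\beta]}(-\tfrac1\lambda B\bar p(x))$ for a.e. $x$, where $P_{[\alpha,\beta]}$ is the metric projection onto $[\alpha,\beta]$; this is classical and I would just cite it.

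The main obstacle is Step 1: one must verify carefully that the constraint qualification of \cite{zoku79} genuinely applies here — this is handled by Lemma \ref{l:regularity}, which requires the condition $\|B\bar u\|<\nu^2$ so that the linearized operator $R'(\bar y,\bar u)$ is onto $H^{-1}(0,1)$ — and then correctly identify the formal adjoint of the linearized Burgers operator, taking care with the transpose of the convection terms via the identities in \eqref{eq:b}. Everything else is routine once the multiplier exists and the algebraic manipulations with $a$ and $b$ are carried out.
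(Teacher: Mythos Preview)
Your proposal is correct and follows exactly the approach the paper takes: invoke Lemma~\ref{l:regularity} to verify the regularity condition and then apply the Zowe--Kurcyusz theory from \cite{zoku79}. The paper's own proof is in fact just that one sentence, so your Steps~1--3 are a detailed (and accurate) unpacking of what the paper leaves implicit; the only thing to tidy is the sign convention you already flag---the paper's Lagrangian carries $-\langle R(y,u),p\rangle$ rather than $+\langle R(y,u),p\rangle$, which is what makes both \eqref{eq:adj_eq} and \eqref{eq:var_ineq} come out with the stated signs simultaneously.
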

\begin{proof}
This system obtained by using Lemma \ref{l:regularity} and applying the theory in \cite{zoku79} .
\end{proof}
\begin{remark}
It is worth to point out that extra regularity of the optimal quantities can be deduced using standard elliptic regularity results from \cite{giltru98}. Indeed, by taking $\omega=(0,1)$ and since $y\in H^2(0,1)$ and $y_d \in L^2(0,1)$ we have that $\bar p \in H^2(0,1)  \hookrightarrow C^{1,\frac12}([0,1])$.  From the characterization  of $\bar u$ given by \eqref{eq:uproj} and properties of the projection operator  $P_{[\alpha, \beta]}$, it follows that $\bar u \in C^{0,1}([0,1])$. By bootstrapping arguments on the state equation,  we have that $ \bar y $ solves Poisson's equation with a right hand side in $C^{0,\frac12}([0,1])$ therefore, by elliptic regularity results, we conclude that $\bar y \in C^{2,\frac12}([0,1])$. Furthermore,  if $y_d$ is assumed in to be in $C^{2,\frac12}([0,1])$ then $\bar p$ is also in $C^{2,\frac12}([0,1])$. This high regularity of $\bar p$, however, can not be transferred to the optimal control because the projection operator.
\end{remark}
For our forthcoming analysis, we introduce the \emph{Lagrangian} $\mathcal{L}: H_0^1(0,1) \times L^2(0,1) \times H_0^1(0,1) \arrow \reals$ defined by:
\[
\mathcal{L}(y,u,p)=\frac{1}{2} \norm{y-y_d}+ \frac{\lambda}{2} \norm{Bu}-\langle R(y,u) ,p\rangle,
\]
whose corresponding first and second derivatives  (with respect to the first and second variable)  at $(y,u,p) \in H_0^1(0,1)\times L^2(0,1) \times H_0^1(0,1)$, are given by:

\begin{subequations} \label{eq:d_Lagr}
\begin{align}
\mathcal{L}'(y,u, p)(w,h)=&(y-y_d,w)+\lambda ( u, h)-\langle R'(y,u)(w,h), p\rangle,  \label{eq:d1_Lagr} \\
&\text{for all }  (w,h) \in H_0^1(0,1)\times L^2(0,1) , \nonumber\\
\mathcal{L}''(y,u, p)(w_1,h_1)(w_2,h_2)=&(w_1,w_2)+\lambda (h_1, h_2)-\langle R''(y,u)(w_1,h_1)(w_2,h_2), p\rangle,  \label{eq:d2_Lagr} \\
&\text{for all }  (w_i,h_i) \in H_0^1(0,1)\times L^2(0,1) , i=1,2. \nonumber
\end{align}
\end{subequations}
These expressions allow us to write down optimality system \eqref{eq:opt_sys} in terms of the derivatives of $\mathcal L$ in the following usual way:
\begin{subequations} \label{eq:opt_sys_lag}
\begin{align}
\text{\myref{eq:adj_eq} is equivalent to}&  \quad  \frac{\partial\mathcal{L}}{\partial y}(\bar y,\bar u,\bar p)=0, \quad \text{and}  \\
\text{\myref{eq:var_ineq} is equivalent to}& \quad \frac{\partial\mathcal{L}}{\partial u}(\bar y,\bar u,\bar p) (u-\bar u) \geq 0 \quad \forall u \in U_{ad}.
\end{align}
\end{subequations}
\subsection{Second-order sufficient optimality conditions}
The forthcoming analysis of Section \ref{s:errorest}  concerning the approximation of the optimal control problem by the finite element method, requires the formulation of  second-order sufficient optimality conditions. By the nature of the nonlinearity of the Burgers equation, it shall be notice that the two norm-discrepancy does not occur in our formulation. In order to establish second-order sufficient conditions we introduce the \emph{critical cone}.  For $\tau>0$, we define the set
\[
\om_\tau:=\{x\in (0,1): |\bar p(x)+\lambda \bar u(x)| >\tau\}.
\]
The critical cone $C^\tau_{\bar u}$ consists of those directions $v\in L^2(0,1)$, such that 
\begin{subequations} \label{eq:crit_cone}
\begin{align}
v(x)=0 &\ \text{if}\ x \in \om_\tau  \label{eq:crit_cone_a} ,\\
v(x)\geq 0&\ \text{if}\ x \in \om \backslash \om_\tau \text{ and } \bar u(x)=u_a(x), \, \text{and} \label{eq:crit_cone_b}\\
v(x)\leq 0&\ \text{if}\ x \in \om \backslash \om_\tau \text{ and } \bar u(x)=u_b(x).\label{eq:crit_cone_c}
\end{align}
\end{subequations}
The next theorem states second-order sufficient conditions for $\mathbf{(P)}$. For a better presentation we will use the notation  $\mathcal{L}''(\bar y, \bar u, \bar p)[y,u]^2 =\mathcal{L}''(\bar y, \bar u, \bar p)(y,u)(y,u)$.
\begin{theorem} \label{t:ssc}
Let $(\bar u, \bar y) \in \mathcal{F}_{ad} $ be a feasible pair for $(\mathbf{P})$ satisfying first-order necessary conditions formulated in Theorem \ref{t:fonc}, with adjoint state $\bar p \in H_0^1(0,1)$. In addition, suppose there are $\tau >0$ and $\delta >0$,  such that the coercivity property
\begin{equation}\label{eq:ssc}
\delta \norm{h}^2 \leq (w,w)+\lambda ( h, h)-2(ww',\bar p) ,
\end{equation}
is satisfied for all  $h \in C^\tau_{\bar u}$   and all $w \in H_0^1(0,1)$ such that  $R'(\bar y, \bar u)(w,h)=0$, then there exist constants $\sigma>0$ and $\varepsilon >0$ such that 
\begin{equation} \label{eq:quad_grow}
J(\bar y,\bar u)+\sigma \norm{u -\bar u}^2 \leq J(y,u)
\end{equation}
holds for every $u \in U_{ad} \cap \overline{ B(\bar u , \varepsilon)}$  and $y\in H_0^1(0,1)$ obeying  $R(y,u)=0$.
\end{theorem}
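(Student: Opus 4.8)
The plan is to argue by contradiction, exploiting that for fixed $\bar p$ the Lagrangian $\mathcal{L}(\cdot,\cdot,\bar p)$ is a \emph{quadratic} functional of $(y,u)$, so its Taylor expansion at $(\bar y,\bar u)$ is exact and $\mathcal{L}''(\bar y,\bar u,\bar p)$ is a fixed bounded bilinear form. Assume \eqref{eq:quad_grow} fails for every $\sigma,\varepsilon>0$; then there exist $u_k\in U_{ad}$ with $\rho_k:=\norm{u_k-\bar u}\to 0$ and states $y_k$ satisfying $R(y_k,u_k)=0$ and $J(y_k,u_k)<J(\bar y,\bar u)+\tfrac1k\rho_k^2$, with necessarily $\rho_k>0$ for $k$ large. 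I would normalise, setting $v_k:=(u_k-\bar u)/\rho_k$ (so $\norm{v_k}=1$) and $w_k:=(y_k-\bar y)/\rho_k$, and pass to a subsequence with $v_k\rightharpoonup v$ in $L^2(0,1)$. Subtracting the weak state equations for $y_k$ and $\bar y$ and dividing by $\rho_k$ shows that $w_k$ solves the linearised Burgers equation $a(w_k,\varphi)+b(w_k,y_k,\varphi)+b(\bar y,w_k,\varphi)=(Bv_k,\varphi)$ for all $\varphi\in H_0^1(0,1)$; testing with $\varphi=w_k$, using \eqref{eq:b2} and the bound $\norm{y_k}_{H_0^1(0,1)}\le\tfrac1\nu\norm{Bu_k}<\nu$ (valid for $k$ large since $\norm{B\bar u}<\nu^2$ by hypothesis), yields a uniform bound for $\norm{w_k}_{H_0^1(0,1)}$. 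Hence $w_k\rightharpoonup w$ in $H_0^1(0,1)$ and, by the compact embedding $H_0^1(0,1)\hookrightarrow C([0,1])$, $w_k\to w$ uniformly; together with $y_k\to\bar y$ in $H_0^1(0,1)$ (Lemma~\ref{l:st_strngconv}) one may pass to the limit in the linearised equation to obtain $R'(\bar y,\bar u)(w,v)=0$.

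Next I would use the exact Taylor expansion of the Lagrangian. Since $R(y_k,u_k)=R(\bar y,\bar u)=0$ we have $J(y_k,u_k)=\mathcal{L}(y_k,u_k,\bar p)$ and $J(\bar y,\bar u)=\mathcal{L}(\bar y,\bar u,\bar p)$, while $\frac{\partial\mathcal{L}}{\partial y}(\bar y,\bar u,\bar p)=0$ by \eqref{eq:opt_sys_lag} and $\frac{\partial\mathcal{L}}{\partial u}(\bar y,\bar u,\bar p)(u_k-\bar u)=(B\bar p+\lambda\bar u,\,u_k-\bar u)\ge 0$ by \eqref{eq:var_ineq}. Combining the expansion with the failed growth inequality and dividing by $\rho_k^2$ gives
\[
\frac{1}{\rho_k^2}\,(B\bar p+\lambda\bar u,\,u_k-\bar u)+\tfrac12\,\mathcal{L}''(\bar y,\bar u,\bar p)[w_k,v_k]^2<\frac1k .
\]
Since $\norm{v_k}=1$, $\norm{w_k}_{H_0^1(0,1)}$ is bounded and $b(w_k,w_k,\bar p)=-\tfrac12\int_0^1 w_k^2\,\bar p'\,dx$ is bounded, the Hessian term is bounded; hence the first (nonnegative) term is bounded by some $C$, and $\limsup_k\mathcal{L}''(\bar y,\bar u,\bar p)[w_k,v_k]^2\le 0$.

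The main obstacle is to show $v\in C^\tau_{\bar u}$. The sign conditions \eqref{eq:crit_cone_b} and \eqref{eq:crit_cone_c} are immediate, since $v_k\ge 0$ on $\{\bar u=\alpha\}$ and $v_k\le 0$ on $\{\bar u=\beta\}$, properties preserved under weak $L^2$ limits. For \eqref{eq:crit_cone_a} I would use the pointwise form of the variational inequality (equivalently \eqref{eq:uproj}): a.e.\ one has $(B\bar p+\lambda\bar u)(x)\,(u_k-\bar u)(x)\ge 0$, and moreover this product is $\ge\tau\,|u_k-\bar u|(x)$ a.e.\ on $\om_\tau$. Hence $\tau\,\norm{u_k-\bar u}_{L^1(\om_\tau)}\le(B\bar p+\lambda\bar u,\,u_k-\bar u)\le C\rho_k^2$, so $\norm{v_k}_{L^1(\om_\tau)}\le (C/\tau)\rho_k\to 0$ and $v=0$ a.e.\ on $\om_\tau$. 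Thus $v\in C^\tau_{\bar u}$, and since $R'(\bar y,\bar u)(w,v)=0$, the coercivity hypothesis \eqref{eq:ssc} yields $\delta\norm{v}^2\le (w,w)+\lambda(v,v)-2(ww',\bar p)=\mathcal{L}''(\bar y,\bar u,\bar p)[w,v]^2$.

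Finally I would close the argument. By weak lower semicontinuity of $\norm{\cdot}_{L^2}$ and the uniform convergence $w_k\to w$ (which gives $b(w_k,w_k,\bar p)\to b(w,w,\bar p)$ via the integrated-by-parts form above), the $\limsup$ estimate yields $\norm{w}^2-2(ww',\bar p)\le\limsup_k\big(\norm{w_k}^2-2(w_kw_k',\bar p)\big)\le -\lambda$, recalling $\norm{v_k}=1$. Substituting into the coercivity inequality gives $(\delta-\lambda)\norm{v}^2\le -\lambda$: if $\delta\ge\lambda$ the left-hand side is nonnegative, contradicting $-\lambda<0$; if $\delta<\lambda$ it forces $\norm{v}^2\ge\lambda/(\lambda-\delta)>1$, contradicting $\norm{v}\le\liminf_k\norm{v_k}=1$. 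In either case we reach a contradiction, so \eqref{eq:quad_grow} holds. The decisive ingredients are the strict positivity of $\lambda$ and the fact that the same $L^2$-norm enters the quadratic growth, the normalisation and the coercivity alike---that is, the absence of any two-norm discrepancy---which is exactly what makes the passage to the limit in the Hessian term legitimate.
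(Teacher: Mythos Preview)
Your argument is correct and follows the same overall contradiction strategy as the paper: assume quadratic growth fails along a sequence, normalise, extract weak limits, verify that the limit direction lies in the critical cone, and combine the second-order condition with the $\limsup$ of the Hessian term to reach a contradiction. Two points of execution differ from the paper and are worth noting. First, your verification that $v=0$ on $\Omega_\tau$ is sharper: you use the pointwise inequality $(B\bar p+\lambda\bar u)(x)(u_k-\bar u)(x)\ge\tau\,|u_k-\bar u|(x)$ on $\Omega_\tau$ together with the bound $(B\bar p+\lambda\bar u,u_k-\bar u)\le C\rho_k^2$ extracted from the Taylor identity, whereas the paper simply asserts that the weak limit stays in $C^\tau_{\bar u}$ by convexity without explaining why the approximating directions $h_k$ themselves vanish on $\Omega_\tau$. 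Second, you exploit that $\mathcal{L}(\cdot,\cdot,\bar p)$ is exactly quadratic, which dispenses with the mean-value step the paper inserts before its Taylor expansion; and your closing case split on $\delta\gtrless\lambda$ replaces the paper's two-stage argument (first $h=0$, then $\lambda\le 0$). Both routes arrive at the same contradiction; yours is somewhat more self-contained on the critical-cone step.
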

\begin{proof} 
We argue by contradiction by adapting the the proof of Theorem 4.1 in \cite{casretro07}. Therefore, we assume the existence of a sequence $(u_k)_{k\in \mathbb{N} }$ in $U_{ad}$ converging to $\bar u$, such that the sequence $(y_k)_{k\in \mathbb{N}}$ of their associated states converge to $\bar y$. Therefore, $(y_k, u_k)$ is an admissible pair which fulfills the relation
\begin{equation}
J(\bar y,\bar u)+\frac1k \norm{u_k -\bar u}^2 > J(y_k,u_k). \label{eq:no_qgc}
\end{equation}

Let us define the sequence of directions $h_{k}:=\ds \frac{u_k-u}{\rho_k}$, and the sequence $w_k:=\ds\frac{y_k-\bar y}{\rho_k}$, with $\rho_k:=\norm{u_k-\bar u}$, for every  $k\in \mathbb{N}$. Clearly $(h_{k})_{k\in \mathbb{N}}$ is bounded, with $\norm{h_{k}}=1$ and so $(w_k)_{k\in \mathbb{N}}$ is also bounded in $H_0^1(0,1)$; this implies the existence of subsequences denoted again by  $(h_{k})_{k\in \mathbb{N}}$  and $(w_k)_{k\in \mathbb{N}}$ respectively, such that $h_{k} \rightharpoonup  h $  in $L^2(0,1)$ and $w_k \rightharpoonup  w $ in $H_0^1(0,1)$.  Moreover, since $h$ belongs to the closed and convex set (and therefore weakly closed) $C^\tau_{\bar u} \cap \overline{B(\bar u, 1)}$, it follows that  $h$ also belongs to $C^\tau_{\bar u}\cap \overline{B(\bar u, 1)}$ .
 Let us check that $(w,h)$ satisfies $R'(\bar y,\bar u)(w,h)=0$. From the definition of $R$ and Lemma \ref{l:diff_R}, the pair $(w_k,h)$ satisfies:
\begin{empheq}{align} \label{eq:lineq_wk}
a(w_k,\varphi)+b(w_k,y_k,\varphi)+b(\bar y, w_k,\varphi)-(B h,\varphi)=0 \quad \forall \varphi \in H_0^1(0,1).
\end{empheq}
Taking the  limit $k \arrow \infty$ in \eqref{eq:lineq_wk}, by the convergence properties of $(h_{k})_{k\in \mathbb{N}}$ and $(w_k)_{k\in \mathbb{N}}$ we see that the pair $(w,h)$ satisfies the linearized equation:
\begin{equation}\label{eq:lineq_w}
R'(\bar y,\bar u)(w,h)=a(w,\varphi)+b(w, \bar y,\varphi)+b(\bar y, w,\varphi)-(B h,\varphi)=0 \quad \forall \varphi \in H_0^1(0,1). 
\end{equation}

%
%
By applying the mean value theorem to the Lagrangian in \eqref{eq:no_qgc}, we obtain 
\begin{equation}\label{eq:Lagr_est1}
\rho_k\mathcal{L}'(\xi_k, \zeta_k,\bar p)(w_k,h)= \mathcal{L}(y_k,u_k,\bar p)-\mathcal{L}(\bar y,\bar u,\bar p)=J(\bar y,\bar u)-J(y_k, u_k)< \frac1k \norm{u_k -\bar u}^2, 
\end{equation}
where $\xi_k$ is between $y_k$ and $\bar y$, and  $\zeta_k$ is between $w_k$ and $w$. Since $h \rightharpoonup h$ in $L^2(0,1)$, Lemma \ref{l:st_strngconv} implies that $w_k \arrow w $ in $L^2(0,1)$ therefore, from \eqref{eq:Lagr_est1} we arrive to 
\begin{equation}
\mathcal{L}'(\bar y,\bar u,\bar p)(w,h) \leq 0.
\end{equation}
On the other hand, we find that $\mathcal{L}'(\bar y, \bar u, \bar p)(w_k,h) \geq 0$ holds for every $k\in \mathbb{N}$  in view of  the first-order necessary conditions expressed in \eqref{eq:opt_sys_lag}. After passing to the limit $k \arrow \infty$ and using the same convergence arguments it follows that
\begin{equation}
\mathcal{L}'(\bar y, \bar u, \bar p)(w,h) \geq 0,
\end{equation}
thus we have that $\mathcal{L}'(\bar y, \bar u, \bar p)(w,h)=0$.

Now, we show that $h=0$.
We recall that if $h=0$ then $w=0$ because $w$ is the unique solution of the linearized equation \eqref{eq:lineq_w}. By using the second-order Taylor expansion of the Lagrangian and having in mind \eqref{eq:d2_R}  and \eqref{eq:d2_Lagr} we get
\begin{equation}
\rho_k \mathcal{L}'(\bar y,\bar u,\bar p)(w_k,h)+\frac{\rho_k^2}{2}\mathcal{L}''(\bar y, \bar u,\bar p)[w_k,h]^2=\mathcal{L}(y_k,u_k,\bar p)-\mathcal{L}(\bar y,\bar u,\bar p), \label{eq:Lagr_est2}
\end{equation}
hence, by using  \eqref{eq:Lagr_est2}  in \eqref{eq:no_qgc} we estimate
\begin{align}
\rho_k \mathcal{L}'(\bar y,\bar u,\bar p)(w_k,h)+
&\frac{\rho_k^2}{2} ( (w_k,w_k)+\lambda( h_{k},  h_{k})-2(w_kw_k',\bar p))
< \frac{\rho_k^2}{k}.
\end{align}
From the definition of $w_k$ and $h_{k}$ and \eqref{eq:opt_sys_lag} the first term in the last inequality is nonnegative, therefore we have
\begin{equation} \label{eq:Lagr_est3}
(w_k,w_k)+\lambda ( h_{k}, h_{k})-2(w_kw_k',\bar p)<\frac{2}{k}.
\end{equation}
Once again, since $h_{k} \rightharpoonup  h $  in $L^2(0,1)$ and $w_k \rightharpoonup  w$ in $H_0^1(0,1)$ then
\begin{equation}
 (w,w)+\lambda ( h,h)-2(w w',\bar p) \leq \liminf_{k\arrow \infty} \, (w_k,w_k)+\lambda ( h_{k}, h_{k})-2(w_kw_k',\bar p)  \leq 0,
\end{equation}
which together with second-order condition \eqref{eq:ssc} implies that $h=0$. Finally, by observing that $\norm{h}=1$,
 we can infer from \eqref{eq:Lagr_est3}  the final contradiction:
\begin{align}
\lambda &=\liminf_{k \arrow 0}(w_k,w_k)+\lambda-2(w_kw_k',\bar p) \nonumber \\
&  \leq \liminf_{k \arrow 0} (w_k,w_k)+\lambda ( h_{k}, h_{k})-2(w_kw_k',\bar p) \leq 0. \nonumber
\end{align}
\end{proof}
\section{Finite element approximation of the Burgers equation}
This section is devoted to the approximation of  Burgers equation by using the finite element method and the derivation of the corresponding error of convergence.  Let $n$ be a positive integer, we define $h:=1/n$  and  a uniform mesh on the interval $[0,1]$ denoted by  $\mathcal{I}_h$, which consists of $n$ subintervals: $I_i=[x_{i-1},x_{i}]$ of $[0,1]$ for $i=1,\ldots,n$, such that $0=x_0 <x_1 \ldots <x_n=1$ and  $[0,1]=\cup_{i=1}^nI_i$. We also introduce the finite dimensional space $V_h \subset H_0^1(0,1)$ defined by
\begin{align}
V_h&=\{y_h\in C([0,1]): y_{h|I_i} \in \mathcal{P}_1 , \text{ for }i=1,\ldots,n, \, \, \text{with} \,\, y_h(0)=y_h(1)=0\}, \nonumber
\end{align}
where $\mathcal{P}_1$ is the space of polynomials of degree less or equal than one.  Therefore, we define  the discrete Burgers equation in $V_h$  as follows: given $f\in L^2(0,1)$ find $y_h \in V_h$ such that 
\begin{equation}\label{eq:discrete_Burgers}
a(y_h, \varphi_h)+b(y_h,y_h, \varphi_{h})=(f, \varphi_h), \quad \forall \varphi_h \in V_h.
\end{equation}
\begin{theorem}\label{t:y_h} If $f \in U_h$ is such that $\norm{f}<\nu^2$ then, equation \eqref{eq:discrete_Burgers} has a unique solution $y_h \in V_h$ such that
\begin{equation} \label{eq: y_h_bound}
\norm{y_h}_{H_0^1(0,1)} \leq \frac{1}{\nu} \norm{f}.
\end{equation}
\end{theorem}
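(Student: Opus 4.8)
The plan is to reproduce at the discrete level the fixed-point proof of the continuous statement \cite[Theorem 2.10]{Volkwein97}, the simplification being that $V_h$ is finite-dimensional, so Brouwer's theorem replaces the Galerkin/limit-passage argument. \emph{Existence.} For fixed $z_h\in V_h$ I consider the linearized problem: find $w_h\in V_h$ with $a(w_h,\varphi_h)+b(z_h,w_h,\varphi_h)=(f,\varphi_h)$ for all $\varphi_h\in V_h$. The bilinear form $(w_h,\varphi_h)\mapsto a(w_h,\varphi_h)+b(z_h,w_h,\varphi_h)$ on $V_h\times V_h$ is bounded by \eqref{eq:b0} and, since \eqref{eq:b2} gives $b(z_h,w_h,w_h)=0$, it is coercive with constant $\nu$; hence Lax--Milgram provides a unique $w_h=:T z_h\in V_h$. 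Testing the defining relation with $\varphi_h=w_h$ and using the Poincar\'e inequality $\norm{v}\le\norm{v}_{H_0^1(0,1)}$ on $(0,1)$ yields $\nu\norm{T z_h}_{H_0^1(0,1)}^2\le\norm{f}\,\norm{T z_h}_{H_0^1(0,1)}$, i.e. $\norm{T z_h}_{H_0^1(0,1)}\le\tfrac1\nu\norm{f}$. Consequently $T$ maps the closed ball $K:=\{z_h\in V_h:\norm{z_h}_{H_0^1(0,1)}\le\tfrac1\nu\norm{f}\}$ into itself; $K$ is convex and compact, and $T$ is continuous on $V_h$ because the linear system defining $T z_h$ has coefficients depending continuously on $z_h$ and an inverse bounded uniformly by the coercivity constant. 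Brouwer's fixed-point theorem then yields $y_h\in K$ with $T y_h=y_h$, which is exactly a solution of \eqref{eq:discrete_Burgers}, and membership in $K$ is the bound \eqref{eq: y_h_bound}.

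\emph{Uniqueness.} Let $y_h^1,y_h^2\in V_h$ both solve \eqref{eq:discrete_Burgers} and put $e:=y_h^1-y_h^2$. Subtracting the two equations and using the identity $b(y_h^1,y_h^1,\varphi_h)-b(y_h^2,y_h^2,\varphi_h)=b(e,y_h^1,\varphi_h)+b(y_h^2,e,\varphi_h)$, then choosing $\varphi_h=e$, discarding $b(y_h^2,e,e)=0$ via \eqref{eq:b2} and replacing $b(e,y_h^1,e)=-b(e,e,y_h^1)$ via \eqref{eq:b1}, one obtains $\nu\norm{e}_{H_0^1(0,1)}^2=b(e,e,y_h^1)$. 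Bounding the right-hand side with \eqref{eq:b0} and inserting the estimate just proved, $\norm{y_h^1}_{H_0^1(0,1)}\le\tfrac1\nu\norm{f}<\tfrac1\nu\nu^2=\nu$, gives $\nu\norm{e}_{H_0^1(0,1)}^2\le\tfrac{\norm{f}}{\nu}\norm{e}_{H_0^1(0,1)}^2<\nu\norm{e}_{H_0^1(0,1)}^2$, which forces $e=0$.

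The argument is routine; the one point carrying the proof is the \textbf{self-mapping property} of $T$, and there---exactly as in the continuous case---the decisive fact is that the convective term drops out when tested against the solution itself, so the a priori estimate $\tfrac1\nu\norm{f}$, and hence the invariant ball $K$, is obtained independently of $h$. (Only $f\in L^2(0,1)$ with $\norm{f}<\nu^2$ is actually used.) The smallness hypothesis $\norm{f}<\nu^2$ enters only in the uniqueness step, to make the contraction-type estimate strict; no inverse estimates or mesh conditions are needed, so the statement holds for every $h>0$.
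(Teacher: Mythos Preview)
Your proof is correct and is precisely the discrete analogue of \cite[Theorem 2.10]{Volkwein97} that the paper invokes without detail: the fixed-point argument, the vanishing of $b(z_h,w_h,w_h)$ for the a priori bound, and the smallness condition $\norm{f}<\nu^2$ for uniqueness all match the intended proof. The only cosmetic point is that in the uniqueness step the bound $\norm{y_h^1}_{H_0^1(0,1)}\le\tfrac1\nu\norm{f}$ should be justified by testing \eqref{eq:discrete_Burgers} directly with $\varphi_h=y_h^1$ (any solution satisfies it), rather than appealing to the fixed-point construction, but this is immediate.
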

\begin{proof}
The proof is completely analogous to the proof  in  \cite[Theorem 2.10]{Volkwein97}.
\end{proof}
\\

Let us denote by $\Pi _h : C([0,1]) \arrow V_h$ the usual Lagrange interpolation operator such that for every $z \in H_0^1(0,1)$,
the element $\Pi_h z$ is the unique element in $V_h$ which satisfies $\Pi_h z(x_i)=z(x_i)$ for $i=0,1,\ldots, n$.  

For convenience, we recall a well known result which establishes  an estimate for the interpolation error cf. \cite{cialio91}.
\begin{lemma} \label{l:proj_err}
Let nonnegative integers $m$ and $k$ and $p,q \in [1,\infty]$. If the embeddings 
\begin{empheq}{align}
W^{k+1,p}(T)& \hookrightarrow C^{0}(T), \,\text{ and} \nonumber \\
W^{k+1,p}(T)& \hookrightarrow W^{m,q}(T)  \nonumber
\end{empheq}
hold, then there exists  a constant $C>0$ independent of $h$ such that the following interpolation error is satisfied
\begin{equation} \label{eq:intep_error}
\norm{y-\Pi_T y}_{W^{m,q}(T)} \leq C h^{n(\frac1q-\frac1p)+k+1-m}\norm{y}_{W^{k+1,p}(T)} ,
\end{equation}
where $\Pi_T y$ is the restriction of $\Pi_h y$ to an element $T$ of the discretization of the domain with dimension $n$.
\end{lemma}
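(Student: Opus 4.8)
The plan is to obtain the estimate on a single reference element and then transport it to a generic element $T$ by an affine change of variables — the standard Bramble--Hilbert route to such interpolation bounds. Fix a reference element $\hat T$ of dimension $n$ (the unit interval when $n=1$) and let $F_T(\hat x)=B_T\hat x+b_T$ be the invertible affine map with $F_T(\hat T)=T$; for $v$ defined on $T$ write $\hat v:=v\circ F_T$. I would first record the classical scaling estimates: for every integer $\ell\ge 0$ and every $r\in[1,\infty]$ there is $C>0$, depending only on $\ell$, $r$, $n$ and $\hat T$, such that
\[
|\hat v|_{W^{\ell,r}(\hat T)}\le C\,\norm{B_T}^{\ell}\,|\det B_T|^{-1/r}\,|v|_{W^{\ell,r}(T)},
\]
together with the reciprocal inequality obtained by interchanging the roles of $T$ and $\hat T$ and replacing $B_T$ by $B_T^{-1}$. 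Because the mesh $\mathcal I_h$ is uniform, $\norm{B_T}\sim h$, $\norm{B_T^{-1}}\sim h^{-1}$ and $|\det B_T|\sim h^{n}$, with constants independent of $h$.

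Next I would use that Lagrange interpolation commutes with affine maps, namely $\widehat{\Pi_T v}=\hat\Pi\hat v$, where $\hat\Pi$ is the interpolation operator on $\hat T$ associated with the same local polynomial space (here $\mathcal P_k$ with $k=1$ for $V_h$, but the argument is carried out for general $k$). The first embedding hypothesis $W^{k+1,p}(\hat T)\hookrightarrow C^0(\hat T)$ guarantees that the nodal values, and hence $\hat\Pi$, are well defined and continuous on $W^{k+1,p}(\hat T)$; combining this with $C^0(\hat T)\hookrightarrow W^{m,q}(\hat T)$ and with the second embedding hypothesis $W^{k+1,p}(\hat T)\hookrightarrow W^{m,q}(\hat T)$ shows that $I-\hat\Pi$ is a bounded linear operator from $W^{k+1,p}(\hat T)$ into $W^{m,q}(\hat T)$. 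Moreover $\hat\Pi$ reproduces polynomials of degree at most $k$, so $(I-\hat\Pi)\,\pi=0$ for every $\pi\in\mathcal P_k$.

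At this point the Bramble--Hilbert lemma applies: a bounded linear operator from $W^{k+1,p}(\hat T)$ to $W^{m,q}(\hat T)$ that annihilates $\mathcal P_k$ satisfies $\norm{(I-\hat\Pi)\hat v}_{W^{m,q}(\hat T)}\le C\,|\hat v|_{W^{k+1,p}(\hat T)}$, with $C$ independent of $\hat v$. Chaining the scaling estimate (to pass from $T$ to $\hat T$ on the right and back from $\hat T$ to $T$ on the left), the identity $\widehat{y-\Pi_T y}=(I-\hat\Pi)\hat y$, and this reference bound gives
\[
\norm{y-\Pi_T y}_{W^{m,q}(T)}\le C\,|\det B_T|^{1/q}\,\norm{B_T^{-1}}^{m}\,\norm{B_T}^{k+1}\,|\det B_T|^{-1/p}\,|y|_{W^{k+1,p}(T)},
\]
and substituting $\norm{B_T}\sim h$, $\norm{B_T^{-1}}\sim h^{-1}$, $|\det B_T|\sim h^{n}$ collapses the exponent of $h$ to $n(\tfrac1q-\tfrac1p)+k+1-m$, which is the claim (after bounding the $W^{k+1,p}$ seminorm by the full norm).

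The step that needs the most care is verifying that the two embedding hypotheses indeed make $I-\hat\Pi$ a bounded operator between the relevant Sobolev spaces on $\hat T$, since that is exactly what licenses the use of the Bramble--Hilbert lemma; the rest is the bookkeeping of the powers of $h$ in the affine scaling, which is routine, especially in the one-dimensional setting relevant here.
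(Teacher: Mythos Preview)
Your argument is correct: it is precisely the standard Bramble--Hilbert plus affine-scaling proof of the local interpolation estimate. Note, however, that the paper does not prove this lemma at all; it simply recalls the statement and cites the handbook \cite{cialio91} for a proof, so there is no ``paper's own proof'' to compare against --- your write-up is essentially the argument one finds in that reference.
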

Moreover, Lemma \ref{l:proj_err}  implies that
\begin{equation}\label{eq:superlinear1}
\lim_{h\arrow 0} \frac{1}{h} \norm{z-\Pi_h z}=0, \quad \forall z \in W^{1,p}(0,1), \, \text{ and } 1<p. 
\end{equation}
The proof for this result can be found in \cite[Lemma 7]{casmat01}.
We are interested in the error estimate for the approximation of the solution of  the Burgers equation using linear finite elements, to this purpose we convent that $C$ denotes a generic constant which is positive and independent of $h$.
\begin{lemma}  \label{l:proj_err_y}
Let $f \in L^2(0,1)$ be such that $\norm{f}<\nu^2$ and $y \in H_0^1(0,1)$ such that $R(y,f)=0$. If $y_h$ denotes the corresponding solution of the discrete equation \eqref{eq:discrete_Burgers} with right-hand side $f$; then, the estimate
\begin{equation} \label{eq:y_interp}
\norm{y-y_h}_{H_0^1(0,1)} \leq  C \norm{y- \Pi_h y}_{H_0^1(0,1)},
\end{equation}
is satisfied. 
\end{lemma}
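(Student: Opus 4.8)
The strategy is the classical Céa-type argument for nonlinear elliptic problems, but one must exploit the smallness condition $\norm{f}<\nu^2$ to absorb the nonlinear terms into the coercive principal part. First I would write $y - y_h = (y - \Pi_h y) + (\Pi_h y - y_h)$ and set $e_h := \Pi_h y - y_h \in V_h$, so that it suffices to estimate $\norm{e_h}_{H_0^1(0,1)}$ in terms of $\norm{y - \Pi_h y}_{H_0^1(0,1)}$, after which the triangle inequality closes the proof. To get an equation for $e_h$, I would subtract the discrete equation \eqref{eq:discrete_Burgers} from the continuous one \eqref{eq:burg1} tested against $\varphi_h \in V_h$ (Galerkin orthogonality), obtaining $a(y - y_h, \varphi_h) + b(y,y,\varphi_h) - b(y_h,y_h,\varphi_h) = 0$ for all $\varphi_h \in V_h$.

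The next step is to linearize the trilinear difference: write $b(y,y,\varphi_h) - b(y_h,y_h,\varphi_h) = b(y - y_h, y, \varphi_h) + b(y_h, y - y_h, \varphi_h)$. Then choosing $\varphi_h = e_h$ and splitting $y - y_h = (y - \Pi_h y) + e_h$ everywhere, one term is $a(e_h, e_h) = \nu \norm{e_h}_{H_0^1(0,1)}^2$ plus $b(y_h, e_h, e_h)$, which vanishes by the antisymmetry property \eqref{eq:b2}. The remaining contributions all contain at least one factor of $y - \Pi_h y$ or are of the form $b(e_h, y, e_h)$; using the continuity bound \eqref{eq:b0} together with the a-priori bounds $\norm{y}_{H_0^1(0,1)} \le \frac1\nu \norm{f}$ and $\norm{y_h}_{H_0^1(0,1)} \le \frac1\nu \norm{f}$ from Theorem \ref{t:y_h}, the term $|b(e_h, y, e_h)| \le \norm{y}_{H_0^1(0,1)} \norm{e_h}_{H_0^1(0,1)}^2 \le \frac1\nu \norm{f} \norm{e_h}_{H_0^1(0,1)}^2$.

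Collecting terms yields, schematically,
\[
\nu \norm{e_h}_{H_0^1(0,1)}^2 \le \frac{1}{\nu}\norm{f}\,\norm{e_h}_{H_0^1(0,1)}^2 + C\,\norm{y - \Pi_h y}_{H_0^1(0,1)}\,\norm{e_h}_{H_0^1(0,1)},
\]
where the constant $C$ depends only on $\nu$ and $\norm{f}$ through the a-priori bounds. The crucial point — and the main obstacle — is that the first term on the right has coefficient $\frac1\nu\norm{f} < \nu$ strictly, by \eqref{eq:visc}; hence $\nu - \frac1\nu\norm{f} > 0$ and we may move it to the left-hand side, divide by $\norm{e_h}_{H_0^1(0,1)}$ (trivial if $e_h = 0$), and obtain $\norm{e_h}_{H_0^1(0,1)} \le \frac{C}{\nu - \nu^{-1}\norm{f}}\,\norm{y - \Pi_h y}_{H_0^1(0,1)}$. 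One must be slightly careful that all the cross terms are genuinely controlled the same way — in particular $b(e_h, \Pi_h y - y, e_h)$-type remainders should be bounded using $\norm{\Pi_h y}_{H_0^1(0,1)} \le \norm{y}_{H_0^1(0,1)} + \norm{y - \Pi_h y}_{H_0^1(0,1)}$, which for $h$ small enough is again bounded by a constant; alternatively one argues with $\norm{y_h}$ directly and avoids $\Pi_h y$ in the nonlinear factor. Finally, $\norm{y - y_h}_{H_0^1(0,1)} \le \norm{y - \Pi_h y}_{H_0^1(0,1)} + \norm{e_h}_{H_0^1(0,1)} \le C\,\norm{y - \Pi_h y}_{H_0^1(0,1)}$, which is \eqref{eq:y_interp}.
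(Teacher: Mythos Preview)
Your proposal is correct and follows essentially the same C\'ea-type argument as the paper: Galerkin orthogonality, a linearization of the trilinear difference, the test function $\varphi_h = \Pi_h y - y_h$, and absorption of the quadratic remainder using $\norm{y}_{H_0^1}, \norm{y_h}_{H_0^1} \le \nu^{-1}\norm{f} < \nu$. The only cosmetic difference is that the paper keeps $z_h \in V_h$ arbitrary until the last step and manipulates $b(y,y,\cdot)-b(y_h,y_h,\cdot)$ into $b(y,y-z_h,\cdot)+b(y-z_h,y_h,\cdot)-b(y_h-z_h,y_h,\cdot)$, whereas you set $z_h=\Pi_h y$ from the outset and use the equivalent splitting $b(y-y_h,y,\cdot)+b(y_h,y-y_h,\cdot)$; both routes lead to the same inequality $(\nu-\nu^{-1}\norm{f})\norm{e_h}_{H_0^1}\le C\norm{y-\Pi_h y}_{H_0^1}$.
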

\begin{proof}
Since $y \in H_0^1(0,1)$  and $y_h \in V_h$ satisfy equations \eqref{eq:burg1} and \eqref{eq:discrete_Burgers} respectively, after subtracting both equations we get
\begin{equation} \label{eq:yest_1}
a(y-y_h, \varphi_h)+b(y,y,\varphi_h)-b(y_h,y_h,\varphi_h)=0, \, \forall \varphi_h \in V_h.
\end{equation}
In particular, if $z_h$ is an arbitrary element in $ V_h$, we choose $\varphi_h=y_h-z_h$ in \eqref{eq:yest_1}, resulting in
\begin{align} \label{eq:yest_2}
a(y_h-z_h, y_h-z_h)&=a(y-z_h,y_h-z_h)+b(y,y,y_h-z_h)-b(y_h,y_h,y_h-z_h)
\end{align}
Let us estimate the right-hand side of  \eqref{eq:yest_2}. In view of \eqref{eq:b1}  and  \eqref{eq:b2} we find that
\begin{align}
b(y,y&,y_h-z_h)-b(y_h,y_h,y_h-z_h) \nonumber\\
& =b(y,y-z_h,y_h-z_h)+b(y,z_h,y_h-z_h) -b(y_h,y_h,y_h-z_h)\nonumber \\
& =b(y,y-z_h,y_h-z_h)+b(y-z_h,y_h,y_h-z_h)-b(y_h-z_h,y_h,y_h-z_h)\nonumber
\end{align}
using  \cite[Lemma 3.4, p.9]{Volkwein97} and inequality  \eqref{eq: y_h_bound} we find out that
\begin{align} \label{eq:yest_3}
b(y,&y,y_h-z_h)-b(y_h,y_h,y_h-z_h) \nonumber\\
&\leq (\norm{y}_{H_0^1(0,1)}+\norm{y_h}_{H_0^1(0,1)}) \norm{y-z_h}_{H_0^1(0,1)}\norm{y_h-z_h}_{H_0^1(0,1)}+\norm{y_h}_{H_0^1(0,1)}\norm{y_h-z_h}^2_{H_0^1(0,1)}  \nonumber\\
& \leq \frac2\nu \norm{f} \norm{y-z_h}_{H_0^1(0,1)}\norm{y_h-z_h}_{H_0^1(0,1)} + \frac1\nu \norm{f}\norm{y_h-z_h}^2_{H_0^1(0,1)}\nonumber\\
& \leq 2\nu\norm{y-z_h}_{H_0^1(0,1)}\norm{y_h-z_h}_{H_0^1(0,1)} + \frac1\nu \norm{f}\norm{y_h-z_h}^2_{H_0^1(0,1)}.
\end{align}
Using \eqref{eq:yest_3} in  identity \eqref{eq:yest_2}, the continuity of $a$ and $b$ implies that
\begin{align}
\nu \norm{y_h-z_h}^2_{H_0^1(0,1)}
							& \leq 3\nu\norm{y-z_h}_{H_0^1(0,1)}\norm{y_h-z_h}_{H_0^1(0,1)}+ \frac1\nu \norm{f}\norm{y_h-z_h}^2_{H_0^1(0,1)},\nonumber
\end{align}
from which, we conclude that
\begin{equation}\label{eq:yest_4}
\nu\l(1 -\frac{\norm{f}}{\nu^2} \r)  \norm{y_h-z_h}^2_{H_0^1(0,1)} \leq 3\nu \norm{y-z_h}_{H_0^1(0,1)}\norm{y_h-z_h}_{H_0^1(0,1)} .
\end{equation}
Observe that the coefficient on the left-hand side is a positive number.
Taking $z_h= \Pi_h y$ in  \eqref{eq:yest_4} and using the fact that $\Pi_h$ is a continuous operator, it follows that 
\begin{align}
\norm{y_h-\Pi_h y}_{H_0^1(0,1)}& \leq C  \norm{y-\Pi_h y}_{H_0^1(0,1)}.
\end{align}
Finally,  the last inequality implies the desired estimate as follows
\begin{align}
\norm{y-y_h}_{H_0^1(0,1)}&  \leq \norm{y-\Pi_h y}_{H_0^1(0,1)}+\norm{y_h-\Pi_h y}_{H_0^1(0,1)} \leq C \norm{y-\Pi_h y}_{H_0^1(0,1)} .
\end{align}
\end{proof}

Combining Lemmas \ref{l:proj_err}  and \ref{l:proj_err_y} we arrive to the following result.

\begin{theorem} \label{t:burg_est_h1}
Let $f \in L^2(0,1)$ be such that $\norm{f}<\nu^ 2$ and let $y \in H_0^1(0,1)$ and $y_h \in V_h$ be the solutions of the equations \eqref{eq:burg1} and \eqref{eq:discrete_Burgers} respectively. Then the estimate
\begin{equation}\label{eq:burg_est_h1}
\norm{y-y_h}_{H_0^1(0,1)} \leq C h \norm{y}_{H^2(0,1)}
\end{equation}
is fulfilled. 
\end{theorem}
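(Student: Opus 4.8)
The plan is to chain the two preceding lemmas. Lemma \ref{l:proj_err_y} already reduces the finite element error to the Lagrange interpolation error, and Lemma \ref{l:proj_err} bounds the latter by a power of $h$ times a Sobolev norm of $y$. The only preliminary point to settle is that $y$ genuinely enjoys the $H^2$ regularity appearing on the right-hand side of \eqref{eq:burg_est_h1}: since $f\in L^2(0,1)$ and $y\in H_0^1(0,1)\hookrightarrow C([0,1])$, we have $yy'\in L^2(0,1)$, so rewriting \eqref{eq:burg1} as a Poisson problem with right-hand side $f-yy'\in L^2(0,1)$ and invoking elliptic regularity gives $y\in H^2(0,1)$, exactly as already remarked after \eqref{eq:visc}. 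In particular $y\in C([0,1])$, so $\Pi_h y$ is well defined.

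First I would apply Lemma \ref{l:proj_err_y} to obtain
\begin{equation}\nonumber
\norm{y-y_h}_{H_0^1(0,1)} \leq C \norm{y-\Pi_h y}_{H_0^1(0,1)}.
\end{equation}
Next, on each subinterval $I_i=[x_{i-1},x_i]$ of the mesh I would apply Lemma \ref{l:proj_err} with space dimension $n=1$, polynomial degree $k=1$, differentiation order $m=1$ and $p=q=2$; the required embeddings $H^2(I_i)\hookrightarrow C^0(I_i)$ and $H^2(I_i)\hookrightarrow H^1(I_i)$ hold in one dimension, so that $\norm{y-\Pi_{I_i} y}_{H^1(I_i)} \leq C h \norm{y}_{H^2(I_i)}$ with $C$ independent of $i$ and $h$ (the mesh being uniform, $h_i=h$). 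Squaring, summing over $i=1,\dots,n$ and using $\norm{v}_{H^1(0,1)}^2=\sum_{i}\norm{v}_{H^1(I_i)}^2$ yields $\norm{y-\Pi_h y}_{H^1(0,1)} \leq C h \norm{y}_{H^2(0,1)}$; since the $H^1$ and $H_0^1$ norms are equivalent on $H_0^1(0,1)$, combining with the first step gives \eqref{eq:burg_est_h1}.

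The argument is essentially bookkeeping; the one genuinely delicate point is that the constant produced by Lemma \ref{l:proj_err_y} be uniform in $h$, which is already guaranteed there by the coefficient $\nu\bigl(1-\norm{f}/\nu^2\bigr)$ in \eqref{eq:yest_4} being a fixed positive number under the standing hypothesis $\norm{f}<\nu^2$, together with the $h$-independent bound \eqref{eq: y_h_bound} for $y_h$. A minor caveat is that Lemma \ref{l:proj_err_y} was invoked above with $f\in L^2(0,1)$, whereas Theorem \ref{t:y_h} is phrased for $f\in U_h$; here only the well-posedness of \eqref{eq:discrete_Burgers} and the estimate \eqref{eq: y_h_bound} for the discrete solution are used, and these hold for every $f\in L^2(0,1)$ with $\norm{f}<\nu^2$ by the same fixed-point argument, so this causes no difficulty.
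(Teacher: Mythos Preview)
Your proposal is correct and follows exactly the route indicated in the paper, which simply states that the result is obtained by combining Lemma~\ref{l:proj_err} and Lemma~\ref{l:proj_err_y}. You have merely fleshed out the details (the $H^2$ regularity of $y$, the elementwise application of the interpolation estimate and the summation, and the $h$-independence of the constants), all of which are implicit in the paper's one-line justification.
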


In the process of deriving error estimates for the finite element approximation of the optimal control problem $\mathbf{ (P)}$, we will need the following estimate in the $L^2$--norm.

\begin{theorem} \label{t:burg_est_l2}
Let $f \in L^2(0,1)$ be such that $\nu^ 2 > \norm{f}$ and let $y \in H_0^1(0,1)$ and $y_h \in V_h$ the solutions of the state equations \eqref{eq:burg1} and \eqref{eq:discrete_Burgers} respectively. Then, the estimate
\begin{equation}\label{eq:burg_est_L2}
\norm{y-y_h}_{L^2(0,1)} \leq C h^2 \norm{y}_{H^2(0,1)}
\end{equation}
is fulfilled. 
\end{theorem}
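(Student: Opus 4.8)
The plan is to use a standard Aubin--Nitsche duality argument adapted to the quadratic nonlinearity of the Burgers equation, leveraging the $H^1$-estimate already obtained in Theorem \ref{t:burg_est_h1}. Set $e := y - y_h \in H_0^1(0,1)$. I would introduce the auxiliary adjoint/dual problem: find $\phi \in H_0^1(0,1)$ solving, in weak form,
\begin{equation*}
a(\varphi,\phi) + b(y,\varphi,\phi) + b(\varphi,y_h,\phi) = (e,\varphi) \quad \forall \varphi \in H_0^1(0,1),
\end{equation*}
that is, the linearized Burgers operator (evaluated at the pair $(y,y_h)$ so that the error enters linearly) tested against $\phi$, with right-hand side $e$. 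One must first check this linear problem is well posed: the associated bilinear form is coercive on $H_0^1(0,1)$ because, using \eqref{eq:b2}, the troublesome trilinear terms evaluated on the diagonal vanish or are controlled by $\tfrac{1}{\nu}\norm{f} < \nu$ via \eqref{eq: y_h_bound} and Theorem \ref{t:y_h}, exactly as in the proof of Lemma \ref{l:proj_err_y}; hence $\phi$ exists, is unique, and by elliptic regularity (taking the nonlinear terms to the right-hand side, which lie in $L^2$) satisfies $\norm{\phi}_{H^2(0,1)} \leq C\norm{e}$.

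Next I would take $\varphi = e$ in the dual problem to get $\norm{e}^2 = a(e,\phi) + b(y,e,\phi) + b(e,y_h,\phi)$. Now I use Galerkin orthogonality: subtracting \eqref{eq:discrete_Burgers} from \eqref{eq:burg1} gives $a(e,\varphi_h) + b(y,y,\varphi_h) - b(y_h,y_h,\varphi_h) = 0$ for all $\varphi_h \in V_h$, and rewriting the trilinear difference via \eqref{eq:b1}--\eqref{eq:b2} as $b(y,e,\varphi_h) + b(e,y_h,\varphi_h)$ (the same algebraic manipulation used in Lemma \ref{l:proj_err_y}), one obtains
\begin{equation*}
a(e,\varphi_h) + b(y,e,\varphi_h) + b(e,y_h,\varphi_h) = 0 \quad \forall \varphi_h \in V_h.
\end{equation*}
Subtracting this with $\varphi_h = \Pi_h \phi$ from the expression for $\norm{e}^2$ yields $\norm{e}^2 = a(e,\phi-\Pi_h\phi) + b(y,e,\phi-\Pi_h\phi) + b(e,y_h,\phi-\Pi_h\phi)$. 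Each term I bound by the continuity of $a$ and $b$ (estimate \eqref{eq:b0}, plus the sharper bound from \cite[Lemma 3.4]{Volkwein97} if needed) times $\norm{e}_{H_0^1(0,1)}\,\norm{\phi-\Pi_h\phi}_{H_0^1(0,1)}$, using $\norm{y}_{H_0^1(0,1)}, \norm{y_h}_{H_0^1(0,1)} \leq \tfrac{1}{\nu}\norm{f}$. Then Lemma \ref{l:proj_err} gives $\norm{\phi - \Pi_h\phi}_{H_0^1(0,1)} \leq Ch\,\norm{\phi}_{H^2(0,1)} \leq Ch\,\norm{e}$, and Theorem \ref{t:burg_est_h1} gives $\norm{e}_{H_0^1(0,1)} \leq Ch\,\norm{y}_{H^2(0,1)}$. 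Combining, $\norm{e}^2 \leq C h^2 \norm{y}_{H^2(0,1)}\,\norm{e}$, and dividing by $\norm{e}$ (trivial if $e=0$) yields \eqref{eq:burg_est_L2}.

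The main obstacle is the treatment of the nonlinear trilinear terms: one must be careful that the dual problem is linearized about the \emph{right} point so that $e$ appears linearly in exactly the combination that Galerkin orthogonality produces, and one must verify coercivity of the dual bilinear form and the $H^2$-regularity of its solution under the smallness condition $\norm{f} < \nu^2$. All of these are handled by reusing the algebraic identities \eqref{eq:b1}--\eqref{eq:b2} and the bound \eqref{eq: y_h_bound} exactly as in Lemma \ref{l:proj_err_y}, so the argument, while requiring care, introduces no genuinely new difficulty beyond that lemma and Theorem \ref{t:burg_est_h1}.
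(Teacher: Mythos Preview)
Your proof is correct and follows the same Aubin--Nitsche duality strategy as the paper, but with two technical differences worth noting. The paper linearizes the dual problem about the continuous solution $y$ alone, writing $a(z,\varphi)+b(y,\varphi,z)+b(\varphi,y,z)=(r,\varphi)$, and consequently must (i) introduce and estimate a discrete dual $z_h\in V_h$, and (ii) absorb a quadratic remainder $b(y-y_h,y-y_h,z)$, which is harmless since it is controlled by $\norm{e}_{H_0^1(0,1)}^2\le Ch^2$. Your choice to linearize about the mixed pair $(y,y_h)$ makes $e$ appear exactly linearly in the Galerkin orthogonality relation, so no quadratic remainder arises and the interpolant $\Pi_h\phi$ suffices in place of a discrete dual. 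This is slightly cleaner; the only additional care required is that your dual coefficients now depend on $y_h$, so one must verify that coercivity and the $H^2$-regularity bound $\norm{\phi}_{H^2(0,1)}\le C\norm{e}$ hold with constants independent of $h$ --- which they do, since $\norm{y_h}_{H_0^1(0,1)}\le\nu^{-1}\norm{f}$ by Theorem~\ref{t:y_h} and in one dimension $H_0^1(0,1)\hookrightarrow L^\infty(0,1)$.
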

\begin{proof}
In order to derive the $L^2$--estimate for the approximation error of the Burgers equation, we introduce the following auxiliary linear problem:
\begin{empheq}[left={\empheqlbrace}]{align}
&\text{Given} \, r\in L^2(0,1), \,\text{find} \, z \in H_0^1(0,1) \, \text{such that:} \nonumber\\
&a(z,\varphi)+b(y,\varphi,z)+b(\varphi, y, z)=(r,\varphi), \quad \forall \varphi \in  H_0^1(0,1), \label{eq:aux_1}
\end{empheq} 
and its finite element approximation:
\begin{empheq}[left={\empheqlbrace}]{align}
&\text{Given} \, r\in L^2(0,1), \,\text{find} \, z_h \in V_h \, \text{such that:} \nonumber\\
&a( z_h,\varphi_h)+b(y,\varphi_h, z_h)+b(\varphi_h,y,z_h)=(r,\varphi_h), \quad \forall \varphi_h \in V_h.\label{eq:aux_2}
\end{empheq} 

Based on the properties of $b$, it is clear that equations \eqref{eq:aux_1} and \eqref{eq:aux_2}  fulfill the hypothesis of the Lax-Milgram theorem in their respective formulation spaces. Indeed, from its definition the symmetric bilinear form $\tilde a(z,\varphi):=a(z,\varphi)+b(y,\varphi, z)+b(\varphi,y,z)$ is continuous in $H_0^1(0,1)$ from its definition. V-ellipticity follows from \eqref{eq:b} and the estimate $ \norm{y}_{H_0^1(0,1)} \leq \frac1\nu \norm{f}$ as follows:
\begin{align}
\tilde a(\varphi, \varphi)=a(\varphi,\varphi)+b(\varphi, y,\varphi) &\geq (\nu-\norm{y}_{H_0^1(0,1)}) \norm{\varphi}_{H_0^1(0,1)}^2 \nonumber \\
&>\nu(1-\frac{\norm{f}}{\nu^2}) \norm{\varphi}^2_{H_0^1(0,1)}>0.
\end{align}
 Therefore, there exist unique solutions $z \in H_0^1(0,1) \cap H^2(0,1)$ of  \eqref{eq:aux_1} and $z_h \in V_h$ of \eqref{eq:aux_2}, respectively. Moreover, by linearity we can easily check that $z$ and $z_h$  satisfy 
\begin{equation}\label{eq:aux_est}
\norm{z- z_h}_{ H_0^1(0,1)} \leq C h \norm{z}_{H^2(0,1)}. 
\end{equation}

Now, let us observe that  \eqref{eq:yest_1} implies that $y$ and $y_h$ fulfill the relation 
\begin{equation}
a(y-y_h,z_h)=-b(y,y-y_h, z_h)-b(y-y_h,y_h, z_h). \label{eq:aux_5h}
\end{equation}
On the other hand,  taking $\varphi=y-y_h$ in \eqref{eq:aux_1}  we have 
\begin{align}
(r, y-y_h)&=a(z,y-y_h)+b(y-y_h,y,z)+b(y,y-y_h, z)\nonumber \\
	       &=a(z-z_h,y-y_h)+a(z_h,y-y_h)+b(y-y_h,y,z)+b(y,y-y_h, z), \,\nonumber 
\end{align}
where we  replace the identity \eqref{eq:aux_5h} to attain
\begin{align}
(r, y-y_h)=& a(z-z_h,y-y_h)+b(y-y_h,y,z)+b(y,y-y_h, z)\nonumber \\
	       &	-b(y,y-y_h, z_h)-b(y-y_h,y_h, z_h)\nonumber \\
		=& a(z-z_h,y-y_h)+b(y-y_h,y-y_h,z)+b(y-y_h,y_h,z-z_h) \nonumber\\
		&+b(y,y-y_h, z-z_h),\nonumber
\end{align}
then, by continuity of $a$ and $b$ we estimate		
\begin{align}		
(r, y-y_h)	\leq &C \norm{z-z_h}_{H_0^1(0,1)}\norm{y-y_h}_{H_0^1(0,1)}+\norm{z} _{H_0^1(0,1)}\norm{y-y_h}^2_{H_0^1(0,1)}  \nonumber\\
	&+(\norm{y} _{H_0^1(0,1)}+\norm{y_h} _{H_0^1(0,1)}) \norm{y-y_h}_{H_0^1(0,1)} \norm{z-z_h}_{H_0^1(0,1)}, \nonumber
\end{align}
finally, by  using  \eqref{eq:burg_est_h1} and \eqref{eq:aux_est} we arrive to
\begin{equation}
\norm{y-y_h}=\sup_{\norm{r} \leq 1} (r, y-y_h) \leq C h^2 \norm{y}_{H^2(0,1)}, \nonumber
\end{equation}
which finishes the proof.
\end{proof}

\section{Numerical approximation of the control problem}
For convenience, we use the following notation:
\begin{itemize}
\item  For every control $u \in U_{ad} $ satisfying $\norm{Bu} < \nu^2$, $y(u)$ denotes the unique solution   of \eqref{eq:burgers} in $H_0^1(0,1)$.
\item For every control $u \in U_{ad} $ satisfying $\norm{Bu} < \nu^2$, $y_h(u)$ denotes the unique solution of \eqref{eq:discrete_state} in $V_h$.
\end{itemize}
$p(u)$ and $p_h(u)$ will be used analogously to denote the corresponding adjoint states.
\\

Let us define the set of discrete admissible controls by $U_{ad,h}=U_{ad} \cap V_h$. In addition, the  state equation is approximated by the following problem: for a given $u \in U_{ad}$, find $y_h \in V_h$ satisfying
\begin{equation}\label{eq:discrete_state}
a(y_h, \varphi_h)+b(y_h,y_h, \varphi)=(B u, \varphi_h), \quad \forall \varphi_h \in V_h.	
\end{equation}

We are interested in unique local solutions close to the optimal state $\bar y$. With respect to this, we have the following preliminar result.
\begin{lemma} \label{l:local_sol}
Let $(\bar y, \bar u) \in H_0^1(0,1) \times U_{ad}$ the optimal pair for $\mathbf{(P)}$ with $\nu^2 >\norm{B\bar u}$. Then, there exist positive numbers $\rho_1$ and $\rho_2$ independent of the mesh parameter $h$, such that for all $ u \in B(\bar u, \rho_1) $ there exists a unique $y_h(u) \in V_h\cap B(\bar y, \rho_2)$  satisfying equation \eqref{eq:discrete_state}. Moreover, the corresponding discrete state $y_h(u)$ satisfies
\begin{equation}
\norm{y_h(u)}_{H_0^1(0,1)} \leq \frac1\nu \norm{Bu}. \nonumber
\end{equation} 
\end{lemma}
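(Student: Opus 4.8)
The plan is to read Lemma \ref{l:local_sol} as a quantitative, mesh-independent localization of the discrete well-posedness result Theorem \ref{t:y_h}: once $u$ is close enough to $\bar u$, the right-hand side $Bu$ of the discrete state equation \eqref{eq:discrete_state} still satisfies the smallness condition of Theorem \ref{t:y_h}, which yields a globally unique $y_h(u)\in V_h$ together with the a priori bound; it then only remains to check that this $y_h(u)$ stays in a fixed ball around $\bar y$, and that the radii of the two balls can be chosen without reference to $h$.

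First I would fix $\rho_1$. Since $B=\mathcal{X}_\omega$, one has $\norm{Bv}^2=\int_\omega v^2\le\norm{v}^2$ for every $v\in L^2(0,1)$, so $\norm{Bu}\le\norm{B\bar u}+\norm{u-\bar u}$. Because $\nu^2>\norm{B\bar u}$ by hypothesis, choosing $\rho_1:=\nu^2-\norm{B\bar u}>0$ (or any smaller positive number) forces $\norm{Bu}<\norm{B\bar u}+\rho_1=\nu^2$ for every $u\in B(\bar u,\rho_1)$; this number manifestly does not depend on $h$.

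Next, for such a $u$, equation \eqref{eq:discrete_state} is exactly the discrete Burgers equation \eqref{eq:discrete_Burgers} with data $f=Bu$, and $\norm{f}<\nu^2$ by the previous step, so Theorem \ref{t:y_h} applies and delivers a unique $y_h(u)\in V_h$ solving \eqref{eq:discrete_state} together with $\norm{y_h(u)}_{H_0^1(0,1)}\le\frac1\nu\norm{Bu}$ --- this is already the ``moreover'' part. To obtain the localization I would combine this bound with the continuous a priori estimate recalled after \eqref{eq:visc}, namely $\norm{\bar y}_{H_0^1(0,1)}\le\frac1\nu\norm{B\bar u}$ (valid because $(\bar y,\bar u)$ is feasible with $\norm{B\bar u}<\nu^2$), and use the triangle inequality together with the strict inequality $\norm{Bu}<\norm{B\bar u}+\rho_1$ to get
\[
\norm{y_h(u)-\bar y}_{H_0^1(0,1)}\le\tfrac1\nu\norm{Bu}+\tfrac1\nu\norm{B\bar u}<\tfrac1\nu\l(2\norm{B\bar u}+\rho_1\r)=:\rho_2 .
\]
Thus $y_h(u)\in V_h\cap B(\bar y,\rho_2)$ with $\rho_2$ again independent of $h$; uniqueness of a solution of \eqref{eq:discrete_state} inside $V_h\cap B(\bar y,\rho_2)$ is then automatic, since Theorem \ref{t:y_h} already gives uniqueness in all of $V_h$.

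I do not expect a genuine obstacle: the argument is essentially bookkeeping on top of Theorem \ref{t:y_h}. The only things to watch are the elementary inequality $\norm{Bv}\le\norm{v}$ that makes the choice of $\rho_1$ explicit, and the requirement that every constant entering the definitions of $\rho_1$ and $\rho_2$ depend only on $\nu$, $\bar u$ and $\bar y$, so that neither radius is tied to the mesh size $h$.
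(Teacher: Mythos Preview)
Your proposal is correct and follows the same overall strategy as the paper: choose $\rho_1$ so that $\norm{Bu}<\nu^2$ for all $u\in B(\bar u,\rho_1)$, then invoke Theorem \ref{t:y_h} to obtain existence, uniqueness and the a priori bound for $y_h(u)$.

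The only genuine difference is in how you localize $y_h(u)$ near $\bar y$. You use the crude triangle inequality $\norm{y_h(u)-\bar y}_{H_0^1}\le \norm{y_h(u)}_{H_0^1}+\norm{\bar y}_{H_0^1}$ together with the two a priori bounds, which immediately gives an $h$-independent $\rho_2$. The paper instead splits through the continuous state $y(u)$,
\[
\norm{\bar y - y_h(u)}_{H_0^1}\le \norm{\bar y - y(u)}_{H_0^1}+\norm{y(u)-y_h(u)}_{H_0^1}\le \tfrac1\nu\norm{B(\bar u-u)}+Ch\norm{y(u)}_{H^2},
\]
using Lipschitz dependence of the continuous solution and the finite element error estimate of Theorem \ref{t:burg_est_h1}. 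The paper's bound is sharper (it vanishes as $u\to\bar u$ and $h\to0$), but one still has to argue that the $h$-term is uniformly bounded to extract an $h$-independent $\rho_2$. Your argument is more elementary and makes the $h$-independence of $\rho_2$ explicit without that extra step; either route is acceptable for the purpose of the lemma.
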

\begin{proof} If we define $\delta:=\nu^2-\norm{B\bar u}$ and $\rho_1:=\frac{\delta}{2}$, then for any $u\in B(\bar u,\rho_1) \cap U_{ad}$ we have that $\norm{B u} \leq \norm{Bu-B\bar u} + \nu^2 - \delta \leq \norm{u-\bar u} + \nu^2 - \delta<\nu^2$. According to Theorem \ref{t:y_h} there exists $y_h=y_h(u)$ satisfying equation   \eqref{eq:discrete_state}, with the bound $\norm{y_h}_{H_0^1(0,1)} \leq \frac1\nu \norm{Bu} $.  $\rho_2$ can be chosen  using the estimate
\begin{align}
\norm{\bar y -y_h(u)}_{H_0^1(\om)}&\leq \norm{\bar y -y(u)}_{H_0^1(,1)}+\norm{ y(u) -y_h(u)}_{H_0^1(0,1)} \nonumber\\
							&\leq \frac1\nu \norm{B(\bar u - u)}+C h \norm{y(u)}_{H^2(0,1)}. \nonumber
\end{align}
\end{proof}
\begin{lemma}  
Let $(\bar y, \bar u)\in H_0^1(0,1) \times U_{ad}$ an optimal pair for $(\mathbf{P})$ with $\nu^2 >\norm{B\bar u}$. Consider controls $u$ and $v$ in the open ball $B(\bar u, \rho_1)$ from Lemma \ref{l:local_sol} , then it follows that
\begin{equation}\label{eq:uv_estimate}
\norm{y(u)-y_h(v)}_{H_0^1(0,1)} \leq C ( h \norm{y(h)}_{H^2(0,1)}+\norm{u-v}) 
\end{equation}
\end{lemma}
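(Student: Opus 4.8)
The plan is to decompose the error through the triangle inequality and handle the two pieces with, respectively, the finite-element estimate for the Burgers equation and a Lipschitz bound for the discrete control-to-state map. First I would write
\[
\norm{y(u)-y_h(v)}_{H_0^1(0,1)} \le \norm{y(u)-y_h(u)}_{H_0^1(0,1)}+\norm{y_h(u)-y_h(v)}_{H_0^1(0,1)}.
\]
Since $u\in B(\bar u,\rho_1)$, the computation in the proof of Lemma \ref{l:local_sol} gives $\norm{Bu}\le \nu^2-\delta/2<\nu^2$ with $\delta=\nu^2-\norm{B\bar u}$, so $y(u)$ and $y_h(u)$ are the unique solutions of \eqref{eq:burg1} and \eqref{eq:discrete_state} with right-hand side $Bu$, and Theorem \ref{t:burg_est_h1} immediately yields $\norm{y(u)-y_h(u)}_{H_0^1(0,1)}\le C h\,\norm{y(u)}_{H^2(0,1)}$.

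For the second term I would set $e_h:=y_h(u)-y_h(v)\in V_h$ (both discrete states exist and are unique by Theorem \ref{t:y_h}, as $\norm{Bu},\norm{Bv}<\nu^2$), subtract the two instances of \eqref{eq:discrete_state}, and test with $\varphi_h=e_h$, obtaining
\[
a(e_h,e_h)+b(y_h(u),y_h(u),e_h)-b(y_h(v),y_h(v),e_h)=(B(u-v),e_h).
\]
Writing $y_h(u)=y_h(v)+e_h$ and using \eqref{eq:b2} to cancel $b(y_h(v),e_h,e_h)$ and $b(e_h,e_h,e_h)$, the nonlinear difference collapses to $b(e_h,y_h(v),e_h)$, which by \eqref{eq:b1} equals $-b(e_h,e_h,y_h(v))$; estimating this term with \eqref{eq:b0} together with the bound $\norm{y_h(v)}_{H_0^1(0,1)}\le \tfrac1\nu\norm{Bv}$ from Theorem \ref{t:y_h}, and using $\norm{B(u-v)}\le\norm{u-v}$ on the right, I get
\[
\nu\l(1-\tfrac{\norm{Bv}}{\nu^2}\r)\norm{e_h}_{H_0^1(0,1)}^2 \le C\,\norm{u-v}\,\norm{e_h}_{H_0^1(0,1)}.
\]
Because $v\in B(\bar u,\rho_1)$ forces $\norm{Bv}\le\nu^2-\delta/2$, the factor $\nu(1-\norm{Bv}/\nu^2)$ is bounded below by $\delta/(2\nu)>0$ uniformly in $h$, $u$ and $v$, so dividing through gives $\norm{e_h}_{H_0^1(0,1)}\le C\norm{u-v}$ with $C$ independent of the mesh. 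Adding the two bounds produces \eqref{eq:uv_estimate}.

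The only genuinely delicate points are the uniformity of the constant and the treatment of the cubic term. The uniformity is secured precisely by confining $u$ and $v$ to $B(\bar u,\rho_1)$, which keeps $\norm{Bu}$ and $\norm{Bv}$ bounded away from $\nu^2$ and hence the coercivity constant of the linearized-type estimate bounded below; and the cubic term $b(e_h,y_h(v),e_h)$ must be reduced via the skew-symmetry identities \eqref{eq:b1}–\eqref{eq:b2} before applying the trilinear bound \eqref{eq:b0}, since a naive estimate would not combine with the viscous term. Everything else is a routine use of continuity of $a$ and $b$, Poincaré's inequality on $(0,1)$, and the triangle inequality.
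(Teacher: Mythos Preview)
Your proof is correct and follows essentially the same route as the paper: both split $\norm{y(u)-y_h(v)}_{H_0^1}$ via the triangle inequality into the finite-element error $\norm{y(u)-y_h(u)}_{H_0^1}$ (handled by Theorem~\ref{t:burg_est_h1}) and a discrete Lipschitz term $\norm{y_h(u)-y_h(v)}_{H_0^1}$, then subtract the two discrete Burgers equations, test with the difference, and use \eqref{eq:b2} to collapse the nonlinear part to a single trilinear term absorbed by the viscous coercivity. The only cosmetic difference is that the paper keeps $y_h(u)$ in the middle slot and arrives at the explicit constant $\nu/(\nu^2-\norm{Bu})$, whereas you keep $y_h(v)$ and bound the coercivity factor uniformly via $\norm{Bv}\le\nu^2-\delta/2$; both are equivalent once $u,v\in B(\bar u,\rho_1)$.
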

\begin{proof}
The proof is analogous to the proof of Lemma \ref{l:proj_err_y}. Since $y_h(u)$ and $y_h(v)$ are the solutions of \eqref{eq:discrete_state} with right-hand side $u$ and $v$ accordingly, we subtract the corresponding equations to satisfy 
\begin{align}\label{eq:eq_uv}
a(y_h(v)-y_h(u),\varphi_h)+b(y_h(v),y_h(v),\varphi_h)-b(y_h(u),y_h(u),\varphi_h)=(B(v-u),\varphi_h),
\end{align}
for all $ \varphi_h \in V_h$. In particular, choosing $\varphi_h=y_h(v)-y_h(u)$ in \eqref{eq:eq_uv} and  using \eqref{eq:b2} we estimate
\begin{align}
\nu \norm{y_h(v)-y_h(u)}^2_{H_0^1(0,1)}=&b(y_h(u),y_h(u),y_h(v)-y_h(u))-b(y_h(v),y_h(v),y_h(v)-y_h(u)) \nonumber\\
&+(B(v-u),y_h(v)-y_h(u)), \nonumber \\
=&b(y_h(u)-y_h(v),y_h(u),y_h(v)-y_h(u)) + (B(v-u),y_h(v)-y_h(u)) \nonumber\\
\leq&\norm{y_h(u)}_{H_0^1(0,1)} \norm{y(v)-y(u)}_{H_0^1(0,1)} ^2+ \norm{v-u}\norm{y(v)-y(u)}_{H_0^1(0,1)}.\nonumber
\end{align}
Taking the first term on the right  to the left side, and taking into account Lemma \ref{l:local_sol} we get
\begin{align}
(\nu -\frac1\nu \norm{B  u} )\norm{y_h(u)-y_h(v)}^2_{H_0^1(0,1)} \leq& \norm{u-v}\norm{y_h(v)-y_h(u)}_{H_0^1(0,1)}, \nonumber
\end{align}
thus, we get the estimate 
\begin{equation}\label{eq:yuyv_estimate}
\ds\norm{y_h(u)-y_h(v)}_{H_0^1(0,1)} \leq \frac{\nu}{\nu^2-\norm{Bu}}  \norm{u-v}.
\end{equation}

 By noticing that $\norm{y(u)-y_h(v)}_{H_0^1(0,1)} \leq \norm{y(u)-y_h(u)}_{H_0^1(0,1)}+ \norm{y_h(u)-y_h(v)}_{H_0^1(0,1)} $,  it is easy to derive \eqref {eq:uv_estimate}  using the estimate \eqref{eq:yuyv_estimate} and the error bound established in Theorem \ref{t:burg_est_h1}.
\end{proof}
%
\\
\\
We are in place to formulate the discrete optimal control problem associated to $\mathbf{(P)}$. Let us define the discrete admissible set $U_{ad,h}:=U_{ad} \cap V_h$. In addition, we shall not consider any source of error on $y_d$. The discrete optimal control problem is given by:
\begin{empheq}[left={\mathbf{(P_h)} \empheqlbrace}]{align}
\displaystyle &\min_{(y, u)\in V_h \times U_{ad,h}} J({y,u})=\frac12\norm{y-y_d}^2 +\frac\lambda2 \norm{u}^2 \nonumber \\
&\text{ subject to \myref{eq:discrete_state}. }\nonumber
\end{empheq}

\begin{remark} The set $\mathcal F_{ad,h}:=\{(y,u)\in V_h \times U_{ad,h}:  (y,u) \, satisfiying \,  \eqref{eq:discrete_state} \}$ is not empty. Therefore, existence of a solution of $\mathbf{(P_h)}$ is a direct consequence of the compactness of $\mathcal{F}_{ad,h}$ and continuity of $J$ in $\mathcal{F}_{ad,h}$.
 \end{remark}
The optimality system for a local solution $\bar u_h$ of $\mathbf{(P_h)}$ can be derived analogously to the continuous optimality system; therefore, we state this without proof in the following theorem.
\begin{theorem}\label{t:fonch}
Let $(\bar y_h, \bar u_h) \in \mathcal{F}_{ad,h}$ be a local solution of $\mathbf{(P_h)}$ such that $\norm{B\bar u} <\nu^2$, then there exists a discrete adjoint state $\bar p_h \in V_h$ such that the following optimality system is fulfilled:
\begin{subequations} \label{eq:opt_sys_h}
\begin{align}
&a(\bar y_h,\varphi_h) +b(\bar y_h,\bar y_h,\varphi_h) =(B \bar u_h,\varphi_h), & \forall \varphi_h \in V_h, \label{eq:state_h}\\
&a(\bar p_h,\varphi_h) -(\bar y_h \bar p'_h,\varphi_h) =(y_d-\bar y_h,\varphi_h), & \forall \varphi_h \in V_h,  \label{eq:adj_h} \\
 &(B\bar p_h + \lambda \bar u_h, u -\bar u_h) \geq 0, & \forall u \in U_{ad,h}.  \label{eq:var_ineqh}
\end{align}
\end{subequations}
\end{theorem}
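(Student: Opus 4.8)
The plan is to mirror the continuous optimality-condition derivation already carried out in Theorem~\ref{t:fonc}, but now inside the finite-dimensional subspace $V_h$, so that everything reduces to finite-dimensional constrained calculus. First I would note that, by the Remark preceding the theorem, the feasible set $\mathcal{F}_{ad,h}$ is nonempty and the discrete state map $u\mapsto y_h(u)$ is well-defined on a neighbourhood of $\bar u_h$ thanks to Theorem~\ref{t:y_h} (which applies because $\|B\bar u_h\|<\nu^2$ and, shrinking the neighbourhood as in Lemma~\ref{l:local_sol}, the same strict inequality persists). Hence near $\bar u_h$ the problem $\mathbf{(P_h)}$ is equivalent to minimizing the reduced functional $j_h(u):=J(y_h(u),u)$ over the convex set $U_{ad,h}$, and $j_h$ is $C^1$ because $u\mapsto y_h(u)$ is differentiable (the discrete analogue of Lemma~\ref{l:diff_R}: differentiating \eqref{eq:discrete_state} gives a well-posed linearized discrete equation, invertible since its bilinear form is $V_h$-elliptic under $\|B\bar u_h\|<\nu^2$, exactly as in the proof of Theorem~\ref{t:burg_est_l2}).

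Next I would write the first-order condition for a local minimizer of a $C^1$ functional over a convex set: $j_h'(\bar u_h)(u-\bar u_h)\ge 0$ for all $u\in U_{ad,h}$. To convert this into the stated system I introduce the discrete adjoint state $\bar p_h\in V_h$ defined as the unique solution of \eqref{eq:adj_h}; uniqueness and existence again follow from $V_h$-ellipticity of the associated bilinear form $\tilde a(\cdot,\cdot)$ (the same form appearing in the proof of Theorem~\ref{t:burg_est_l2}, with $y=\bar y_h$), which is strictly coercive because $\|\bar y_h\|_{H_0^1(0,1)}\le\frac1\nu\|B\bar u_h\|<\nu$. Then I differentiate the discrete state equation \eqref{eq:discrete_state} in a direction $v=u-\bar u_h$ to obtain $z_h=y_h'(\bar u_h)v\in V_h$ solving the linearized discrete equation, test that equation with $\bar p_h$, test the adjoint equation \eqref{eq:adj_h} with $z_h$, and subtract: the bilinear terms cancel by construction, leaving $(\bar y_h-y_d,z_h)=(B v,\bar p_h)$. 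Combining this with the chain rule $j_h'(\bar u_h)v=(\bar y_h-y_d,z_h)+\lambda(\bar u_h,v)$ yields $j_h'(\bar u_h)v=(B\bar p_h+\lambda\bar u_h,v)$, and the variational inequality becomes exactly \eqref{eq:var_ineqh}. Equation \eqref{eq:state_h} is just \eqref{eq:discrete_state} restated at the optimum.

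The main obstacle, such as it is, lies in carefully justifying the test-function manipulation that makes the bilinear terms cancel: unlike the continuous case, in $V_h$ one must check that $z_h$ is an admissible test function in \eqref{eq:adj_h} and that $\bar p_h$ is admissible in the linearized discrete state equation — both hold trivially since all quantities lie in $V_h$, so the cancellation is cleaner than in the continuous setting and there is no two-norm discrepancy to worry about. A secondary point requiring care is that the linearization is only valid for directions $v$ such that $\bar u_h+v$ remains in the neighbourhood where $y_h(\cdot)$ is defined; but since the condition $j_h'(\bar u_h)(u-\bar u_h)\ge 0$ is positively homogeneous in $u-\bar u_h$ and $U_{ad,h}$ is convex, testing with points arbitrarily close to $\bar u_h$ suffices to recover it for all $u\in U_{ad,h}$. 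Since the paper explicitly says this theorem is stated without proof, the above is exactly the routine argument it alludes to; I would present it in two or three short paragraphs at most, or simply cite the parallel derivation of Theorem~\ref{t:fonc}.
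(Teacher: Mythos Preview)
Your proposal is correct and is precisely the routine argument the paper alludes to; the paper itself gives no proof beyond the remark that the system ``can be derived analogously to the continuous optimality system,'' so your reduced-functional derivation with the discrete adjoint defined via the $V_h$-elliptic linearized form is exactly what is intended. You have in fact supplied more detail than the paper does, and your closing suggestion---either a brief paragraph or a direct citation of Theorem~\ref{t:fonc}---matches the paper's own treatment.
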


Later on, in the derivation of the order of convergence for the optimal control, we shall need this optimality system as well as the following estimate for the adjoint equation.
\begin{theorem}\label{t:ph_estimate} Let $(y(u), u)$ a feasible pair  for problem $\mathbf{(P)} $, with $\norm{B u} <\nu^2$ and let the adjoint state $p(u)$ solution of the following equation:
\begin{equation}\label{eq:adj_p}
a(p(u),\varphi) + b(y(u),\varphi, p(u)) + b(\varphi,y(u),p(u)) = (y(u)-y_d,\varphi), \quad \forall \varphi \in H_0^1(0,1).
\end{equation}
If $p_h(u) \in V_h$ is the solution of the discretized version of equation \eqref{eq:adj_p}; that is:
\begin{equation}\label{eq:discrete_adjoint}
a(p_h,\varphi_h)+ b(y_h(u),\varphi_h, p) + b(\varphi_h,y_h(u),p) =( y_h ( u)-y_{d},\varphi_h) ,\quad   \forall \varphi_h \in V_h,
\end{equation}
 then there exists a constant $C$, independent of $h$ such that
\begin{equation} \label{eq:ph_H1}
\norm{ p(u) -p_h(u)}_{H_0^1(0,1)} \leq Ch,
\end{equation}
moreover,  the  estimate in the $L^2$-norm holds:
\begin{equation} \label{eq:ph_L2}
\norm{ p(u) - p_h(u)}\leq Ch^2.
\end{equation}
\end{theorem}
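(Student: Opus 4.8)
The plan is to treat the adjoint equations \eqref{eq:adj_p} and \eqref{eq:discrete_adjoint} as linear elliptic problems with a lower-order drift term and reduce the estimates \eqref{eq:ph_H1}--\eqref{eq:ph_L2} to a combination of (i) the standard finite element theory for the linear operator $\tilde a(\cdot,\cdot)$ already shown to be $V$-elliptic in the proof of Theorem~\ref{t:burg_est_l2}, and (ii) the state error bounds of Theorems~\ref{t:burg_est_h1} and \ref{t:burg_est_l2}. First I would observe that the bilinear form $a_y(p,\varphi):=a(p,\varphi)+b(y(u),\varphi,p)+b(\varphi,y(u),p)$ coincides (up to the symmetry relation \eqref{eq:b1}) with the form $\tilde a$ appearing in the proof of Theorem~\ref{t:burg_est_l2}, hence it is continuous and $V$-elliptic with ellipticity constant $\nu(1-\norm{Bu}/\nu^2)>0$, uniformly for $u\in B(\bar u,\rho_1)$; consequently $p(u)\in H_0^1(0,1)\cap H^2(0,1)$ with $\norm{p(u)}_{H^2(0,1)}\le C$, the constant depending only on $\norm{y(u)-y_d}$ and the ellipticity constant.

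Next I would split the error by introducing the \emph{auxiliary} discrete adjoint $q_h\in V_h$ solving $a_{y}(q_h,\varphi_h)=(y(u)-y_d,\varphi_h)$ for all $\varphi_h\in V_h$, i.e. the Galerkin approximation of $p(u)$ for the \emph{exact} coefficient $y(u)$ and the \emph{exact} right-hand side. For this term, C\'ea's lemma together with the interpolation estimate of Lemma~\ref{l:proj_err} gives $\norm{p(u)-q_h}_{H_0^1(0,1)}\le C h\norm{p(u)}_{H^2(0,1)}\le Ch$, and a standard Aubin--Nitsche duality argument (using again the $H^2$-regularity of the adjoint problem, which holds because its coefficients are the same up to sign) yields $\norm{p(u)-q_h}\le Ch^2$. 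It then remains to estimate $\norm{q_h-p_h(u)}$: subtracting the defining relations, the difference $e_h:=q_h-p_h(u)\in V_h$ satisfies, for all $\varphi_h\in V_h$,
\begin{equation}\label{eq:ph_diff}
a_{y_h(u)}(e_h,\varphi_h)=\bigl(b(y_h(u)-y(u),\varphi_h,q_h)+b(\varphi_h,y_h(u)-y(u),q_h)\bigr)+\bigl(y(u)-y_h(u),\varphi_h\bigr),
\end{equation}
where $a_{y_h(u)}$ is the perturbed form with $y(u)$ replaced by $y_h(u)$; note $a_{y_h(u)}$ is still uniformly $V$-elliptic for $h$ small by Lemma~\ref{l:local_sol} and the bound $\norm{y_h(u)}_{H_0^1}\le\frac1\nu\norm{Bu}$. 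Testing \eqref{eq:ph_diff} with $\varphi_h=e_h$, using the continuity bound \eqref{eq:b0} for $b$, the uniform bound on $\norm{q_h}_{H_0^1(0,1)}$, and Theorem~\ref{t:burg_est_h1} to control $\norm{y(u)-y_h(u)}_{H_0^1(0,1)}\le Ch$, gives $\norm{e_h}_{H_0^1(0,1)}\le Ch$; combined with the triangle inequality and the C\'ea estimate this proves \eqref{eq:ph_H1}.

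For the $L^2$-estimate \eqref{eq:ph_L2} one cannot simply bound the right-hand side of \eqref{eq:ph_diff} in $H^{-1}$, since the terms $b(y_h(u)-y(u),\varphi_h,q_h)$ pair a first derivative of $\varphi_h$ against $y_h(u)-y(u)$, which only gives $O(h)$; the main obstacle is therefore to recover the extra power of $h$ here. The remedy is a second duality argument: let $\psi\in H_0^1(0,1)\cap H^2(0,1)$ solve $a_{y_h(u)}(\varphi,\psi)=(e_h,\varphi)$ for all $\varphi\in H_0^1(0,1)$, with $\norm{\psi}_{H^2(0,1)}\le C\norm{e_h}$; then $\norm{e_h}^2=a_{y_h(u)}(e_h,\psi)=a_{y_h(u)}(e_h,\psi-\Pi_h\psi)+a_{y_h(u)}(e_h,\Pi_h\psi)$, and into the second term we substitute \eqref{eq:ph_diff} with $\varphi_h=\Pi_h\psi$. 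The first term is bounded by $C\norm{e_h}_{H_0^1(0,1)}\norm{\psi-\Pi_h\psi}_{H_0^1(0,1)}\le Ch\cdot\norm{e_h}_{H_0^1(0,1)}\cdot h\norm{e_h}\le Ch^2\cdot h\cdot\norm{e_h}$; in the second term, the $b$-contributions are rewritten (integrating by parts / using \eqref{eq:b1}) so that the factor $y_h(u)-y(u)$ is paired against $q_h'\,\Pi_h\psi$ or $q_h(\Pi_h\psi)'$, both bounded in $L^2$, whence this contribution is $\le C\norm{y(u)-y_h(u)}\,\norm{\psi}_{H^1}\le Ch^2\norm{e_h}$ by Theorem~\ref{t:burg_est_l2}, while the term $(y(u)-y_h(u),\Pi_h\psi)$ is likewise $\le Ch^2\norm{e_h}$. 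Dividing by $\norm{e_h}$ yields $\norm{e_h}\le Ch^2$, and together with the Aubin--Nitsche bound on $\norm{p(u)-q_h}$ this establishes \eqref{eq:ph_L2}. \hfill\kw
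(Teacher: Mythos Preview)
Your proof is correct but organized differently from the paper's. The paper proceeds in one pass: it subtracts \eqref{eq:adj_p} and \eqref{eq:discrete_adjoint} directly, inserts $\Pi_h p$ as a comparison element, and manipulates the resulting $b$--terms (using \eqref{eq:b}) to obtain \eqref{eq:ph_H1}; for \eqref{eq:ph_L2} it introduces a single auxiliary primal--type problem \eqref{eq:aux_p} with coefficient $y(u)$ and argues as in Theorem~\ref{t:burg_est_l2}. You instead decouple the two sources of error by inserting the intermediate Galerkin solution $q_h$ of the adjoint problem with the \emph{exact} coefficient $y(u)$ and \emph{exact} right-hand side: then $p(u)-q_h$ is handled by C\'ea and Aubin--Nitsche for the fixed elliptic form $a_y$, while $q_h-p_h(u)$ carries only the perturbation $y(u)\!\to\! y_h(u)$ in both the coefficient and the data, which you control via Theorems~\ref{t:burg_est_h1} and \ref{t:burg_est_l2} together with a second duality.

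The trade-off is this: the paper's single-step argument is shorter in total but requires more algebra in juggling the $b$--terms simultaneously for both the coefficient and the interpolation error; your splitting is longer (two duality arguments instead of one) but each piece is a standard building block, and the data-perturbation step \eqref{eq:ph_diff} makes transparent exactly where the $O(h^2)$ bound on $\norm{y(u)-y_h(u)}$ enters. Two small points to tighten: (i) in the dual problem for $e_h$ you take the coefficient $y_h(u)$, so the $H^2$--regularity constant for $\psi$ must be shown uniform in $h$; this holds because $\norm{y_h(u)}_{H_0^1}\le \nu^{-1}\norm{Bu}$ by Lemma~\ref{l:local_sol} and $H^1\hookrightarrow L^\infty$ in one dimension; (ii) the estimate of $a_{y_h(u)}(e_h,\psi-\Pi_h\psi)$ gives $C\norm{e_h}_{H_0^1}\cdot h\norm{\psi}_{H^2}\le Ch^2\norm{e_h}$, not $Ch^3\norm{e_h}$ as your chain of inequalities suggests---but the weaker bound is already what you need.
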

\begin{proof}
The proof is analogous to the proof of Theorem \ref{t:burg_est_l2}. To simplify notation we define $p=p(u)$ and $p_h=ph_(u)$. Let us take $\varphi_h \in V_h$ as test function in the weak formulation of \eqref{eq:adj_p} and  \eqref{eq:discrete_adjoint} respectively, and then substract the resulting equations obtaining
\begin{align}
a(p_h- p,\varphi_h)&=b(y(u),\varphi_h,  p )-b(y_h(u),\varphi_h, p_h ) \nonumber \\
&+b(\varphi_h, y(u), p )-b(\varphi_h,y_h(u), p_h ) \nonumber \\
&+ (y_h(u)-y(u),\varphi_h ) ,  \label{eq:est_p.0}
\end{align}
by applying property \eqref{eq:b} and choosing $\varphi_h = p_h-\Pi_h p$, from \eqref{eq:est_p.0} we have that
\begin{align}
a(p_h-&\Pi_h p,p_h-\Pi_h p) \nonumber \\
=&a( p-\Pi_h p,p_h-\Pi_h p)+b(y(u)-y_h(u),p_h-\Pi_h p,  p ) \nonumber \\
&+ b(y_h(u), p_h-\Pi_h p, p-p_h)+ b(p_h-\Pi_h p, y(u)-y_h(u), p ) \nonumber \\
&+  b(p_h-\Pi_h p, y_h(u), p-p_h ) + (y(u)-y_h(u), p_h-\Pi_h p)\nonumber \\
=&a( p-\Pi_h p,p_h-\Pi_h p)+b(y(u)-y_h(u),p_h-\Pi_h p,  p ) \nonumber \\
&+ b(y_h(u), p_h-\Pi_h p, p-\Pi_{h}p)+ b(p_h-\Pi_h p, y(u)-y_h(u), p ) \nonumber \\
&+  b(p_h-\Pi_h p, y_h(u), p-\Pi_{h}p ) + b(p_h-\Pi_h p, y_h(u), \Pi_{h}p-p_h ) \nonumber\\
& +  (y(u)-y_h(u), p_h-\Pi_h p). \label{eq:est_{p.0a}}
\end{align}
Taking into account the definition of the trilinear form $b$, the property \eqref{eq:b0} and Lemma \ref{l:local_sol} we estimate
\begin{align}
&\nu \norm{p_h-\Pi_h p}_{H_0^1(0,1)}  \nonumber \\
& \leq (2\norm{y_h(u)}_{H_0^1(0,1)} + \nu)\norm{p-\Pi_h p}_{H_0^1(0,1)} +  \norm{y_h(u)}_{H_0^1(0,1)} \norm{p_h-\Pi_{h}p}_{H_0^1(0,1)}  \nonumber\\
& \quad + (2\norm{p}_{H_0^1(0,1)}+1)\norm{y(u)-y_h(u)}_{H_0^1(0,1)} \nonumber \\
&  \leq (\frac2\nu\norm{B u} + \nu)\norm{p-\Pi_h p}_{H_0^1(0,1)} + (2\norm{p}_{H_0^1(0,1)}+1)\norm{y(u)-y_h(u)}_{H_0^1(0,1)} \nonumber \\
& \quad +  \frac1\nu\norm{Bu}_{H_0^1(0,1)} \norm{p_h-\Pi_{h}p}_{H_0^1(0,1)} \label{eq:est_p.0b}
\end{align}
Taking the last term in \eqref{eq:est_p.0b} to the left-hand side, since $\norm{Bu} \leq \nu^2$ and using estimates \eqref{eq:burg_est_h1} and Lemma \ref{l:proj_err} we have that there exist a constant $C>0$ independent of $h$ such that
\begin{align}
\norm{p_h-\Pi_h p}_{H_0^1(0,1)}\leq & C (\norm{p-\Pi_h  p}_{H_0^1(0,1)} + \norm{y(u)-y_h(u)}_{H_0^1(0,1)} )\nonumber \\
 \leq & Ch ( \norm{ p}_{H^2(0,1)}+\norm{ y(u)}_{H^2(0,1)}). \nonumber
\end{align}
From the last inequality we obtain the desired $H_0^1$-estimate \eqref{eq:ph_H1} since
\begin{align}
\norm{p-p_h}_{H_0^1(0,1)} \leq& \norm{p-\Pi_h p}_{H_0^1(0,1)} + \norm{\Pi_h p-p_h}_{H_0^1(0,1)} \leq  C h. \nonumber
\end{align}
for some constant $C>0$ independent of $h$.
Now, we prove  \eqref{eq:ph_L2}. By similar arguments used to derive \eqref{eq:burg_est_L2} we consider the auxiliary  linear elliptic problem \eqref{eq:aux_1}, with $y=y(u)$:
\begin{equation} \label{eq:aux_p}
\left\{
\begin{array}{ll}
&\text{Given} \, r\in L^2(0,1), \,\text{find} \, z \in H_0^1(0,1) \, \text{such that:} \\
&a(z,\varphi)+b( y(u),z,\varphi)+b(z,  y(u), \varphi)=(r,\varphi), \quad \forall \varphi \in  H_0^1(0,1), 
\end{array}
\right.
\end{equation} 
Once again, the bilinear form $\tilde a(z,\varphi):=a(z,\varphi)+b( y(u),z,\varphi)+b(z,  y(u), \varphi)$ is elliptic. Therefore, \eqref{eq:aux_p} has a unique solution $z \in H_{0}^{1}(0,1)$ with $\norm{z}_{H_{0}^{1}(0,1)}\leq \norm{r}$.  We denote by $z_h \in V_h$ the corresponding  finite element approximation \eqref{eq:aux_2}, which fulfills the error estimate $\norm{z-z_h} _{H_0^1(0,1)} \leq C h$, for some constant $C>0$ independent of $h$. After subtracting the auxiliary problem \eqref{eq:aux_p} and its discretization choosing $\varphi=p_h-p$ we get
\begin{align}
(r,p_h-p )= & a(z, p - p_h) + b( y(u),z, p_h-p)+ b(z, y(u), p_h-p) \nonumber \\
				     =&a(z-z_h, p - p_h)+a(z_h, p - p_h)  + b(y(u),z, p_h-p)+ b(z, y(u), p_h-p). \label{eq:est_p.3}
\end{align}
Choosing $\varphi_h=z_h$ in identity \eqref{eq:est_p.0} and inserting in \eqref{eq:est_p.3} leads to
\begin{align}
(r,p_h-p )= & a(z-z_h, p - p_h)+(y_h(u)- y(u), z_{h}) \nonumber \\
&+ b( y(u) , z_h, p )-b(y_h(u),z_h, p_h ) +b(z_h,y(u), p )-b(z_h,y_h(u), p_h ) \nonumber \\
&+ b( y(u) , z, p_h-p)+ b(z,y(u), p_h-p) \nonumber \\
= & a(z-z_h, p - p_h)+(y_h(u)-y(u), z_{h}) \nonumber \\
&+ b(y(u),z-z_h, p_h-p )+b(z-z_h,y(u), p_h-p ) \nonumber \\
&-b(y_h(u)-y(u),z_h, p_h ) -b(z_h,y_h(u)-y(u), p_h ), \nonumber \\
= & a(z-z_h, p - p_h)+(y_h(u)-y(u), z_{h}) \nonumber \\
&+ b(y(u),z-z_h, p_h-p )+b(z-z_h,y(u), p_h-p )\nonumber \\
&+ (z_{h}p_{h}', y_{h}(u)-y(u)), \nonumber 			  
\end{align}
since $a$ is continuous in $(H_0^1(0,1))^{2}$ and $b$ satisfies \eqref{eq:b0}, from estimates \eqref{eq:burg_est_h1} and \eqref{eq:burg_est_L2}similarly to Theorem  \ref{t:burg_est_l2} we get the estimate \eqref{eq:ph_L2}.
\end{proof}

We are interested in the convergence properties of local solutions of $\mathbf{(P_h)} $.  Following  ideas given in \cite{casmat01}, the following  convergence properties are established.
\begin{assumption}\label{eq:A1} In order to establish an order of convergence, we will assume the following 
\begin{equation}
\delta_\nu:=\ds\sup_{h>0} \{ \nu^2 - \norm{B u_h} \}>0.
\end{equation}
\end{assumption}
This uniform bound for $\bar u_h$ is needed to have the following result.
\begin{corollary}
We notice that in our notation $\bar p_h = p_h(\bar u_h) $ and $\bar p = p (\bar u)$; consequently,  under the assumption that   $\delta_\nu:=\ds\sup_{h>0} \{ \nu^2 - \norm{B u_h} \}>0$ then Theorem \ref{t:ph_estimate} implies that
\begin{align} \label{eq:adj_order}
\norm{\bar p_h - \bar p} &\leq \norm{p_h (\bar u_h)-p(\bar u_h)} + \norm{p (\bar u_h)-\bar p (\bar u)} \nonumber \\
&\leq  c( h^2 + \norm{\bar u - \bar u_h}) ,
\end{align}
for some constant $c$ independent of the size of the mesh $h$.
\end{corollary}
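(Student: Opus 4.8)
The plan is to prove the corollary by a simple triangle inequality together with the estimates already established in Theorem~\ref{t:ph_estimate}. First I would decompose the error $\norm{\bar p_h - \bar p}$ by inserting the intermediate quantity $p(\bar u_h)$, namely the continuous adjoint state associated with the \emph{discrete} optimal control $\bar u_h$, so that
\[
\norm{\bar p_h - \bar p} \leq \norm{p_h(\bar u_h) - p(\bar u_h)} + \norm{p(\bar u_h) - p(\bar u)}.
\]
The first term is handled directly: by Assumption~\ref{eq:A1} we have $\norm{B\bar u_h} \leq \nu^2 - \delta_\nu < \nu^2$ uniformly in $h$, so $(y(\bar u_h),\bar u_h)$ is a feasible pair to which Theorem~\ref{t:ph_estimate}, in particular the $L^2$-estimate \eqref{eq:ph_L2}, applies; this gives $\norm{p_h(\bar u_h) - p(\bar u_h)} \leq C h^2$ with $C$ independent of $h$ (the uniformity of $C$ being exactly what the uniform bound $\delta_\nu>0$ secures, since the constants in Theorem~\ref{t:ph_estimate} depend on $\nu^2 - \norm{Bu}$).

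For the second term I would use the Lipschitz dependence of the continuous adjoint state on the control. This follows from standard estimates for the linear elliptic adjoint equation \eqref{eq:adj_p}: subtracting the equations for $p(\bar u_h)$ and $p(\bar u)$, testing with $p(\bar u_h) - p(\bar u)$, and using the $V$-ellipticity of the associated bilinear form (already verified in the proof of Theorem~\ref{t:burg_est_l2}) together with the Lipschitz dependence of $y(\cdot)$ on the control, $\norm{y(\bar u_h) - y(\bar u)}_{H_0^1(0,1)} \leq \frac{1}{\nu}\norm{B(\bar u_h - \bar u)} \leq \frac{1}{\nu}\norm{\bar u_h - \bar u}$, one obtains $\norm{p(\bar u_h) - p(\bar u)}_{H_0^1(0,1)} \leq c\,\norm{\bar u_h - \bar u}$, and a fortiori the $L^2$-bound. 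Combining the two bounds yields $\norm{\bar p_h - \bar p} \leq c(h^2 + \norm{\bar u - \bar u_h})$, which is the claim.

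I do not expect a genuine obstacle here; the only point requiring care is bookkeeping of the constants. One must check that the constant $C$ in the $L^2$-estimate of Theorem~\ref{t:ph_estimate} is bounded uniformly over $h$, and this is precisely why Assumption~\ref{eq:A1} is invoked: the ellipticity constant of $\tilde a$ for the auxiliary problem, and hence every constant downstream, is controlled by $\nu(1 - \norm{Bu}/\nu^2) \geq \nu \delta_\nu / \nu^2 > 0$, uniformly in $h$ when $u = \bar u_h$. Likewise the $H^2$-regularity norms of $y(\bar u_h)$ and $p(\bar u_h)$ entering the right-hand sides of \eqref{eq:ph_H1}--\eqref{eq:ph_L2} are bounded uniformly, since $\bar u_h$ ranges in the bounded set $U_{ad}$ and the maps $u \mapsto y(u)$, $u \mapsto p(u)$ are bounded on bounded subsets of $U_{ad}$ satisfying the viscosity condition. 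With these remarks the triangle-inequality argument goes through verbatim.
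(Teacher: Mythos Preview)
Your proposal is correct and matches the paper's approach exactly: the paper does not give a separate proof of this corollary, but the triangle-inequality decomposition is already written into the displayed inequality of the statement itself, with the first term controlled by Theorem~\ref{t:ph_estimate} (estimate~\eqref{eq:ph_L2}) and the second by the Lipschitz dependence of $p(\cdot)$ on the control. Your additional remarks on the uniformity of the constants via Assumption~\ref{eq:A1} make explicit a point the paper leaves tacit.
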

\begin{theorem} \label{t:conv1}
Let $(\bar y_h, \bar u_h)$ an optimal pair for $\mathbf{(P_h)} $. Then, there is a  subsequence $(\bar u_h)_{h>0}$ which converges weakly in $L^2(0,1)$ to a limit $\bar u$. Moreover, the weak limit $\bar u$ is a solution of $\mathbf{(P)} $ which satisfies
\begin{equation}\label{eq:convergence}
\lim_{h\arrow 0} J(\bar y_h,\bar u_h)=J(\bar y,\bar u)=\inf(P) \quad \text{and} \quad \lim_{h\arrow 0} \norm{\bar u_h -\bar u}=0.
\end{equation}
\end{theorem}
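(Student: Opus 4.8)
The plan is to use the standard three-step argument for convergence of finite element discretizations of optimal control problems: extract a weakly convergent subsequence of the discrete optima, identify its weak limit as a global minimizer of $\mathbf{(P)}$ by comparison with a recovery sequence, and finally upgrade the weak convergence of the controls to strong convergence via convergence of norms.

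First I would note that $\bar u_h\in U_{ad,h}\subset U_{ad}$, so $(\bar u_h)_{h>0}$ is bounded in $L^2(0,1)$ by $\max\{|\alpha|,|\beta|\}$; hence there is a subsequence, still denoted $(\bar u_h)$, with $\bar u_h\rightharpoonup\bar u$ in $L^2(0,1)$, and since $U_{ad}$ is closed and convex, hence weakly closed, $\bar u\in U_{ad}$. By Assumption~\ref{eq:A1}, $\norm{B\bar u_h}\le\nu^2-\delta_\nu$, so $\norm{B\bar u}\le\liminf_h\norm{B\bar u_h}<\nu^2$ and $\bar y:=y(\bar u)$ is well defined, i.e. $(\bar y,\bar u)\in\mathcal{F}_{ad}$. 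Next I would show $\bar y_h\to\bar y$: because $B$ is bounded and linear, $B\bar u_h\rightharpoonup B\bar u$ in $L^2(0,1)$, so Lemma~\ref{l:st_strngconv} gives $y(\bar u_h)\to\bar y$ in $H_0^1(0,1)$; since $\norm{B\bar u_h}$ is bounded, the associated states $y(\bar u_h)$ are bounded in $H^2(0,1)$, so Theorem~\ref{t:burg_est_h1} yields $\norm{\bar y_h-y(\bar u_h)}_{H_0^1(0,1)}=\norm{y_h(\bar u_h)-y(\bar u_h)}_{H_0^1(0,1)}\le Ch\to0$, whence $\bar y_h\to\bar y$ strongly in $H_0^1(0,1)$ and in $L^2(0,1)$; in particular $\tfrac12\norm{\bar y_h-y_d}^2\to\tfrac12\norm{\bar y-y_d}^2$.

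The core step is to show that $\bar u$ is optimal. Let $(y^\ast,u^\ast)$ be a global solution of $\mathbf{(P)}$, which exists by the existence theorem above and, under the standing condition on admissible controls, satisfies $\norm{Bu^\ast}<\nu^2$; then the Remark following Theorem~\ref{t:fonc} gives $u^\ast\in C^{0,1}([0,1])$, so $\Pi_h u^\ast\in V_h$ is well defined, and since a piecewise linear function takes, on each element, values between its two nodal values, $\alpha\le u^\ast(x_i)\le\beta$ forces $\Pi_h u^\ast\in U_{ad,h}$; moreover $\norm{\Pi_h u^\ast-u^\ast}\to0$ by~\eqref{eq:superlinear1}. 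For $h$ small, $\norm{B\Pi_h u^\ast}\le\norm{\Pi_h u^\ast-u^\ast}+\norm{Bu^\ast}<\nu^2$, so the discrete state $y_h(\Pi_h u^\ast)$ exists, and repeating the reasoning of the previous paragraph (Theorem~\ref{t:burg_est_h1} plus Lemma~\ref{l:st_strngconv} applied to $\Pi_h u^\ast\to u^\ast$) gives $y_h(\Pi_h u^\ast)\to y^\ast$ in $L^2(0,1)$, hence $J(y_h(\Pi_h u^\ast),\Pi_h u^\ast)\to J(y^\ast,u^\ast)=\inf(P)$. Since $(\bar y_h,\bar u_h)$ minimizes over $\mathcal{F}_{ad,h}$ while $(y_h(\Pi_h u^\ast),\Pi_h u^\ast)\in\mathcal{F}_{ad,h}$, using weak lower semicontinuity of $\norm{\cdot}^2$ together with the convergences already established I would obtain
\[
J(\bar y,\bar u)\ \le\ \liminf_{h\to0}J(\bar y_h,\bar u_h)\ \le\ \limsup_{h\to0}J(\bar y_h,\bar u_h)\ \le\ \lim_{h\to0}J\bigl(y_h(\Pi_h u^\ast),\Pi_h u^\ast\bigr)\ =\ \inf(P).
\]
Since $(\bar y,\bar u)\in\mathcal{F}_{ad}$ forces $J(\bar y,\bar u)\ge\inf(P)$, all the terms coincide; thus $\bar u$ is a solution of $\mathbf{(P)}$ and $\lim_{h\to0}J(\bar y_h,\bar u_h)=J(\bar y,\bar u)=\inf(P)$.

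Finally, for strong convergence of the controls I would combine $\tfrac12\norm{\bar y_h-y_d}^2\to\tfrac12\norm{\bar y-y_d}^2$ with $J(\bar y_h,\bar u_h)\to J(\bar y,\bar u)$ to get $\tfrac\lambda2\norm{\bar u_h}^2\to\tfrac\lambda2\norm{\bar u}^2$, i.e. $\norm{\bar u_h}\to\norm{\bar u}$; in the Hilbert space $L^2(0,1)$ this together with $\bar u_h\rightharpoonup\bar u$ yields $\norm{\bar u_h-\bar u}^2=\norm{\bar u_h}^2-2(\bar u_h,\bar u)+\norm{\bar u}^2\to0$. The main obstacle is the construction of an admissible discrete recovery sequence in the core step: keeping the interpolant inside $U_{ad,h}$ and guaranteeing existence of its discrete Burgers state for small $h$. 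This is exactly where the one-dimensional regularity $u^\ast\in C^{0,1}([0,1])$, afforded by the elliptic regularity Remark, is essential; passing to the limit directly in the discrete optimality system~\eqref{eq:opt_sys_h} would be more delicate, whereas everything else here is routine compactness and lower-semicontinuity bookkeeping.
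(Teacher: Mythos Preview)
Your proof is correct and follows essentially the same line as the paper's: weak compactness of $U_{ad}$, a recovery sequence built from the interpolant $\Pi_h$ of an optimal control, the $\liminf$/$\limsup$ sandwich, and the upgrade to strong convergence of the controls via convergence of norms. The only cosmetic difference is that you interpolate an independently chosen global minimizer $u^\ast$ while the paper interpolates the weak limit $\bar u$ itself; your choice makes the comparison with $\inf(P)$ slightly more transparent, but the argument is otherwise the same.
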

\begin{proof}
Let  $(\bar u_h)_{h>0}$  be the sequence such that $\bar u_h$ is the optimal control for $(\mathbf{P_h})$. Since this sequence is formed by admissible controls for the problem $\mathbf{(P_h)} $ then it is also bounded. Thus, we can extract a weak convergent subsequence in $L^2(0,1)$, denoted again by $(\bar u_h)_{h>0}$. Let us denote its weak limit by $\bar u$ and denote by $\bar y :=y(\bar u )$ the associated state. It is clear, in view of  Lemma \ref{l:st_strngconv}, that $\bar y_h \arrow  \bar y$ in $H_0^1(0,1)$. Noticing that the pair $(y_h(\Pi_h \bar u), \Pi_h \bar u)$ is feasible for $(\mathbf{P_h})$  and by convexity of the objective functional we can conclude that
\begin{align}
J(\bar y, \bar u) \leq & \liminf_{h\arrow 0} J(\bar y_h, \bar u_h) \nonumber \\
		 \leq &  \liminf_{h\arrow 0} J(y_h(\Pi_h \bar u), \Pi_h \bar u) \nonumber \\
		\leq &  \limsup_{h\arrow 0} J(y_h(\Pi_h \bar u), \Pi_h \bar u) = J(\bar y, \bar u) =\inf(\mathbf P),\nonumber 
\end{align}
which  together with the fact that $(\bar y, \bar u) \in \mathcal{F}_{ad}$ imply that $\bar u$ is an optimal control for $(\mathbf P)$. Then, the first identity of \eqref{eq:convergence} follows by applying Mazur's Theorem and convexity of the objective functional.
The second identity of  \eqref{eq:convergence} is obtained by the following argument
\begin{align}
\frac{\lambda}{2}\norm{\bar u_h - \bar u}^2&=\frac{\lambda}{2}\norm{\bar u_h}^2-\frac{\lambda}{2}\norm{\bar u}^2+\lambda (\bar u, \bar u_h -\bar u) \nonumber \\
&=J(\bar y_h, \bar u_h)-J(\bar y, \bar u)+\frac12\norm{\bar y - \bar y_h}^2+\lambda (\bar u, \bar u_h -\bar u),  
\end{align}
where the last term tends to 0 as $h \arrow 0$ by the weak convergence of $\bar u_h$ to $\bar u$ and the first part of  \eqref{eq:convergence}.
\end{proof}
\subsection{Derivation of the order of convergence}\label{s:errorest}
In this section we derive the main result of this paper by adapting the theory developed in \cite{cas06}. First we recall some auxiliary results. 
We denote the solution of the adjoint equation by $p(\bar u_{h})$ satisfying
\begin{equation}\label{eq:adj_uh}
a(p, \varphi) + b(\bar y_{h}, \varphi, p) + b(\varphi,\bar y_{h}, p) = ( \bar y_{h} - y_{d}, \varphi),  
\end{equation}
for all $\varphi \in H_0^1(0,1)$, where $\bar y_{h}$ is the solution of \eqref{eq:state_h}.
\begin{lemma}\label{l:left_est}
Let us assume that there is a constant $c>0$ independent of $h$ such that $\norm{\bar u_h -\bar u} \geq ch$, then there is an $h_0>0$ and $\mu>0$ such that the estimate
\begin{align}\label{eq:left_est}
\mu \norm{\bar u_{h}-\bar u }^2 \leq
&[\mathcal{L'}(y(\bar u_{h}),\bar u_{h}, p(\bar u_{h}))-\mathcal{L'}(\bar y,\bar u, \bar p)](y(\bar u_{h})-\bar y,\bar u_{h}  - \bar u),
\end{align}
is satisfied for all $h<h_0$.
\end{lemma}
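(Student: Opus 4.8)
The plan is to recast the right-hand side of \eqref{eq:left_est} as an application of the second-order sufficient condition \eqref{eq:ssc}, treating everything that stands in the way as a higher-order perturbation. Set $w_h:=y(\bar u_h)-\bar y$, $v_h:=\bar u_h-\bar u$ and $\rho_h:=\norm{v_h}$, and recall $\rho_h\to0$ from Theorem \ref{t:conv1}. The starting point is the exact algebraic identity obtained by observing that $\mathcal L(\cdot,\cdot,p)$ is quadratic in $(y,u)$ with a Hessian $\mathcal L''$ that is independent of $(y,u)$ (see \eqref{eq:d2_R}--\eqref{eq:d2_Lagr}), while $\mathcal L'(y,u,\cdot)$ is affine in $p$; splitting the increment of $\mathcal L'$ first in $(y,u)$ and then in $p$ gives
\[
[\mathcal L'(y(\bar u_h),\bar u_h,p(\bar u_h))-\mathcal L'(\bar y,\bar u,\bar p)](w_h,v_h)=\mathcal L''(\bar y,\bar u,\bar p)[w_h,v_h]^2-\langle R'(y(\bar u_h),\bar u_h)(w_h,v_h),p(\bar u_h)-\bar p\rangle .
\]
It therefore suffices to bound the first term below by $\delta\rho_h^2$ up to lower-order terms and to show the second term is $o(\rho_h^2)$.

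The second term is the easy one: since $R$ is quadratic and $R(\bar y,\bar u)=R(y(\bar u_h),\bar u_h)=0$, the exact Taylor expansion gives $R'(y(\bar u_h),\bar u_h)(w_h,v_h)=\tfrac12R''(w_h,v_h)^2$, hence by \eqref{eq:d2_R} it equals $b(w_h,w_h,p(\bar u_h)-\bar p)$; using \eqref{eq:b0}, the Lipschitz bound $\norm{w_h}_{H_0^1(0,1)}\le C\rho_h$ and $\norm{p(\bar u_h)-\bar p}_{H_0^1(0,1)}\le C(h+\rho_h)$ (from Theorem \ref{t:ph_estimate} and the stability of the adjoint equation), this term is $O(\rho_h^2(h+\rho_h))=o(\rho_h^2)$. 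For the Hessian term I would replace $w_h$, which solves only the nonlinear state equation, by the solution $\tilde w_h$ of the linearized equation $R'(\bar y,\bar u)(\tilde w_h,v_h)=0$ — well posed by the ellipticity already exploited in Theorem \ref{t:burg_est_l2}; the same quadratic Taylor argument gives $R'(\bar y,\bar u)(w_h-\tilde w_h,0)=-\tfrac12R''(w_h,v_h)^2$, and the coercivity of $R'(\bar y,\bar u)(\cdot,0)$ yields $\norm{w_h-\tilde w_h}_{H_0^1(0,1)}\le C\rho_h^2$, so that $\mathcal L''(\bar y,\bar u,\bar p)[w_h,v_h]^2=\mathcal L''(\bar y,\bar u,\bar p)[\tilde w_h,v_h]^2+O(\rho_h^3)$.

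The decisive — and, I expect, hardest — step is to show that $v_h\in C^\tau_{\bar u}$ for $h$ small, after which \eqref{eq:ssc} applied to $(\tilde w_h,v_h)$ gives $\mathcal L''(\bar y,\bar u,\bar p)[\tilde w_h,v_h]^2\ge\delta\rho_h^2$ and the proof closes: for $h<h_0$,
\[
[\mathcal L'(y(\bar u_h),\bar u_h,p(\bar u_h))-\mathcal L'(\bar y,\bar u,\bar p)](w_h,v_h) \ge \delta\rho_h^2-C(h+\rho_h)\rho_h^2 \ge \tfrac{\delta}{2}\rho_h^2 ,
\]
which is \eqref{eq:left_est} with $\mu=\delta/2$; here the standing hypothesis $\rho_h\ge ch$ together with $\rho_h\to0$ is what makes the perturbations absorbable for $h$ small. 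For the membership $v_h\in C^\tau_{\bar u}$: the sign conditions \eqref{eq:crit_cone_b}--\eqref{eq:crit_cone_c} are immediate from the feasibility $\alpha\le\bar u_h\le\beta$; for \eqref{eq:crit_cone_a} I would use that on $\om_\tau$ the continuous control $\bar u=P_{[\alpha,\beta]}(-\tfrac1\lambda B\bar p)$ is pinned to one of the bounds and that, by Lipschitz continuity of $B\bar p+\lambda\bar u$, the set $\om_\tau$ lies at a mesh-independent positive distance from the boundary of the corresponding active set; testing the discrete variational inequality \eqref{eq:var_ineqh} with perturbations of $\bar u_h$ by individual nodal hat functions at nodes $x_j$ in this buffered region, and using $\norm{\bar p_h-\bar p}_{L^\infty(0,1)}\le C\norm{\bar p_h-\bar p}_{H_0^1(0,1)}\to0$ (Assumption \ref{eq:A1} provides the uniform feasibility $\norm{B\bar u_h}<\nu^2$ needed to invoke Theorem \ref{t:ph_estimate}), one forces $\bar u_h=\bar u$ on $\om_\tau$, so $v_h$ vanishes there. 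The main care in this step goes into the piecewise-linear structure of $U_{ad,h}$ near $\partial\om$ — this is where one takes $\om=(0,1)$, for which $\bar u\in C^{0,1}([0,1])$, or assumes $\overline{\om_\tau}\subset\om$; alternatively one truncates $v_h$ onto $C^\tau_{\bar u}$ and absorbs the truncation error using $\rho_h\ge ch$.
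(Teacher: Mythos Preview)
Your argument is correct and takes a genuinely different route from the paper's. The paper proceeds by a compactness argument: it normalizes $z_h=w_h/\rho_h$, $\hat v_h=v_h/\rho_h$, extracts weakly convergent subsequences with limits $(z,v)$, shows $v\in C_{\bar u}^\tau$ by exploiting the hypothesis $\rho_h\geq ch$ to force $\|\Pi_h\bar u-\bar u\|/\rho_h\to0$ via \eqref{eq:superlinear1}, verifies $R'(\bar y,\bar u)(z,v)=0$, applies \eqref{eq:ssc} to the \emph{limit} pair, and recovers the desired inequality for finite $h$ by weak lower semicontinuity of the normalized Hessian expression. You instead establish directly that the unnormalized direction $v_h=\bar u_h-\bar u$ itself lies in $C_{\bar u}^\tau$ for $h$ small, via a nodal argument that combines the discrete variational inequality \eqref{eq:var_ineqh} with the one-dimensional embedding $H_0^1\hookrightarrow C^0$ (giving $L^\infty$-convergence of $\bar p_h$); this lets you apply \eqref{eq:ssc} to $(\tilde w_h,v_h)$ rather than to a weak limit, and the remaining perturbation terms are handled by your exact quadratic identities. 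Your approach is more constructive and, notably, does not actually require the hypothesis $\rho_h\geq ch$ along the main line of argument (you invoke it only as a fallback for boundary effects), whereas in the paper this hypothesis is precisely what places the weak limit in the critical cone. The trade-off is that your nodal argument leans on the one-dimensional piecewise-linear structure and the $C^0$-embedding, while the paper's weak-limit route follows the pattern of \cite{cas06} and transfers to higher-dimensional semilinear problems without modification.
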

\begin{proof}
From the first derivative of the Lagrangian given by \eqref{eq:d1_Lagr}, it follows that
\begin{align}
[\mathcal{L'}(y(\bar u_h),&\bar u_h, p(\bar u_h)) -\mathcal{L'}(\bar y,\bar u, \bar p)](y(\bar u_h)-\bar y,\bar u_h - \bar u) \nonumber \\
=&[\mathcal{L'}(y(\bar u_h),\bar u_h, p(\bar u_h)) -\mathcal{L'}(y(\bar u_h),\bar u_h,\bar p)](y(\bar u_h)-\bar y,\bar u_h - \bar u)\nonumber \\
&+[\mathcal{L'}(y(\bar u_h),\bar u_h, \bar p) -\mathcal{L'}(\bar y,\bar u, \bar p)](y(\bar u_h)-\bar y,\bar u_h - \bar u)  \nonumber \\
=& [\mathcal{L'}(y(\bar u_h),\bar u_h, p(\bar u_h)) -\mathcal{L'}(y(\bar u_h),\bar u_h, \bar p)](y(\bar u_h)-\bar y,\bar u_h - \bar u) \nonumber \\
&+\norm{y(\bar u_h) -\bar y}^2+\lambda \norm{\bar u_h -\bar u}^2  -2b(y(\bar u_h)-\bar y, y(\bar u_h)-\bar y, \bar p).
\label{eq:error.1}
\end{align}
Since $y(\bar u_h)$ and $\bar y =y(\bar u)$ satisfy \eqref{eq:burg1} for $f=\bar u_h$ and $f=\bar u$ respectively, the first term on the right-hand side in \eqref{eq:error.1} satisfies:
\begin{align}
[\mathcal{L'}(y&(\bar u_h), \bar u_h, p(\bar u_h)) -\mathcal{L'}(y(\bar u_h),\bar u_h,\bar p)](y(\bar u_h)-\bar y,\bar u_h - \bar u)  \nonumber \\
=&-\langle R'(y(\bar u_h), \bar u_h)(y(\bar u_h)- \bar y, \bar u_h - \bar u),  p(\bar u_h)-\bar p) \nonumber \\
=&-a(y(\bar u_h)-\bar y,p(\bar u_h) - \bar p ) -b(y(\bar u_h) -\bar y, y(\bar u_h),p(\bar u_h) - \bar p) - b(y(\bar u_h), y(\bar u_h) -\bar y,p(\bar u_h) - \bar p) \nonumber \\
&+(B(\bar u_h-\bar u), p(\bar u_h)- \bar p) \nonumber \\
=& b(y(\bar u_h), y(\bar u_h), p(\bar u_h) - \bar p) - (B\bar u_h, p(\bar u_h) - \bar p) - b(\bar y, \bar y, \bar p_h - \bar p) + (B\bar u,  p(\bar u_h) - \bar p) \nonumber \\
&-b(y(\bar u_h) -\bar y, y(\bar u_h), p(\bar u_h) - \bar p) - b(y(\bar u_h), y(\bar u_h) -\bar y, p(\bar u_h)- \bar p)+(B(\bar u_h-\bar u), p(\bar u_h) - \bar p) \nonumber \\
=&-b(y(\bar u_h) - \bar y, y(\bar u_h) -\bar y, p(\bar u_h) - \bar p) \label{eq:error.0},
\end{align}
by replacing  \eqref{eq:error.0} in  \eqref{eq:error.1}   we observe
\begin{align}
[\mathcal{L'}&(y(\bar u_{h}),\bar u_{h}, p(\bar u_{h}))-\mathcal{L'}(\bar y,\bar u, \bar p)](y(\bar u_{h})-\bar y,\bar u_{h} - \bar u) \nonumber \\
=&\norm{y(\bar u_{h})-\bar y}^2+\lambda \norm{\bar u_{h} -\bar u}^2-2b(y(\bar u_{h})-\bar y, y(\bar u_{h})-\bar y, \bar p) \nonumber \\
&- b(y(\bar u_{h}) - \bar y, y(\bar u_{h}) -\bar y, p(\bar u_{h}) - \bar p).
\label{eq:error.1a}
\end{align}
Now, let us define the sequences $z_{h}=\ds\frac{y(\bar u_{h})-\bar y}{\norm{\bar u_{h}-\bar u}}$ and  $v_{h}=\ds\frac{\bar u_{h}-\bar u}{\norm{\bar u_{h}-\bar u}}$. By noticing that these sequences are bounded, we can extract a subsequence of $h$ denoted again by $h$, such that $h \arrow 0$  and  $v_{h} \rightharpoonup v$ in $L^2(0,1)$ and $z_{h} \rightharpoonup z$  in $H_0^1(0,1)$. 
We shall proof that $(z,v)$ belongs to the critical cone $C_{\bar u}^\tau$.  By its definition, $v_h$ fulfills \eqref{eq:crit_cone_b} and \eqref{eq:crit_cone_c}, and so does $v$. In order to check that $v(x)=0$ whenever $x \in \om_{\tau}$, we argue as in \cite{cas06}. If $h \arrow 0$, from Theorem \ref{t:conv1}, we have that there also exists a subsequence $\bar u_{h}(x) \arrow \bar u (x)$ a.e. $x \in (0,1)$ 
We notice that since $\bar{u}_{h} \arrow \bar u$  the associated adjoint state $\bar p_{h} \arrow \bar p$, both in in $L^2(0,1)$ which together with  $v_{h} \rightharpoonup v$ imply 
\begin{align}
(B\bar p +\lambda \bar u, v) \nonumber & =\lim_{h \arrow 0} (B\bar p_{h} + \lambda \bar u_{h}, v_{h}) \nonumber \\
						&=\lim_{h \arrow 0} \frac{1}{\norm{\bar u_{h}-\bar u}} (B\bar p_{h} + \lambda \bar u_{h}, \bar u_{h}-\bar u)  \nonumber \\
						& = \lim_{h \arrow 0} \frac{1}{\norm{\bar u_{h}-\bar u}} (B\bar p_{h} + \lambda \bar u_{h}, \bar u_h -\Pi_{h} \bar u) + \frac{1}{\norm{\bar u_{h}-\bar u}} (B\bar p_{h} + \lambda \bar u_{h}, \Pi_{h} \bar u - \bar u).\nonumber 
\end{align}						
By considering \eqref{eq:var_ineqh} we infer
\begin{align}						
(B\bar p +\lambda \bar u, v) 						& \leq \lim_{h \arrow 0} \frac{1}{\norm{\bar u_{h}-\bar u}} (B(\bar p_{h}-\bar p) + \lambda (\bar u_{h}-\bar u), \Pi_{h} \bar u - \bar u)  \nonumber \\
						&\qquad +\lim_{h \arrow 0} \frac{1}{\norm{\bar u_{h}-\bar u}}(B\bar p +\lambda \bar u, \Pi_{h} \bar u - \bar u)\nonumber \\
						&    \leq \lim_{k \arrow 0} \frac{\norm{\Pi_{h} \bar u - \bar u }}{\norm{\bar u_{h}-\bar u}} \l(  \norm{B(\bar p_{h} -\bar p)} +\lambda \norm{\bar u_{h} - \bar u}  \r)+ \norm{B\bar p+ \lambda \bar u} \lim_{h \arrow 0} \frac{\norm{\Pi_{h} \bar u - \bar u}}{\norm{\bar u_{h}-\bar u}} \nonumber \\
						& \leq c\lim_{h \arrow 0}  \frac{\norm{\Pi_{h} \bar u - \bar u }}{h}=0, \nonumber
\end{align}
where the last estimation follows from Lemma \ref{l:proj_err}. From the last inequality and the fact that $\tau <|\bar p(x) + \lambda \bar u(x)|$, we infer that $v(x)=0$ in $\om_{\tau}$ and thus we have that $v \in C_{\bar u}^\tau$. Furthermore, the pair $(z,v)$ satisfies the linear equation given by $R'(\bar y, \bar u)(z,v)=0$. To see this,  substract $\langle R(y(\bar u_{h}), \varphi \rangle$ and  $\langle R(\bar y, \bar u), \varphi \rangle$ to obtain
\begin{equation}  \label{eq:lineq_wk2}
a(z_{h},\varphi)+b(z_{h}, y(\bar u_{h}),\varphi)+b(\bar y, z_{h},\varphi)-(B v_{h},\varphi)=0 \quad \forall \varphi \in H_0^1(0,1),
\end{equation}
hence,  using the convergence properties of $z_{h}$ and $v_{h}$ and  Lemma \ref{l:st_strngconv},  after passing to the limit $k \arrow \infty$, we get $R'(\bar y, \bar u)(z,v)=0$.

From \eqref{eq:error.1a}   we have
\begin{align}
[\mathcal{L'}&(y(\bar u_{h}),\bar u_{h}, p(\bar u_{h}))-\mathcal{L'}(\bar y,\bar u, \bar p)](y(\bar u_{h})-\bar y,\bar u_{h} - \bar u) \nonumber \\
=&\l(\norm{z_{h}}^2+\lambda \norm{v_{h}}^2-2b(z_{h}, z_{h}, \bar p)  -b(z_{h}, z_{h}, \bar p(\bar u_{h}) - \bar p )  \r) \norm{\bar u_{h}-\bar u }^2,
\label{eq:error.1ab}
\end{align}

Moreover,  since the pair $(z,v) \in C^\tau_{\bar u}$ with $R'(\bar y, \bar u)(z,v)=0$ and $p(\bar u_{h}) \arrow \bar p$ uniformly, by applying second order sufficient conditions \eqref{eq:ssc} we infer that
\begin{align}
\liminf_{h \arrow 0} & \l( \norm{z_{h}}^2+\lambda \norm{v_{h}}^2-2b(z_{h}, z_{h}, \bar p) -b(z_{h}, z_{h}, p(\bar u_{h})- \bar p)   \r) \nonumber \\
& \geq \norm{z}^2+\lambda \norm{v}^2-2b(z, z,\bar p) \geq \delta \norm{v}^2. \label{eq:error.2c}
\end{align}
Finally, from \eqref{eq:error.1ab}, \eqref{eq:error.2c} and choosing $\mu=\frac{\delta}{2}\norm{v}^2$,  there exists an $h_0$ such that for all $h< h_0$ we get
\begin{align}
\mu \norm{\bar u_{h}-\bar u }^2 \leq
&[\mathcal{L'}(y(\bar u_{h}),\bar u_{h}, p(\bar u_{h}))-\mathcal{L'}(\bar y,\bar u, \bar p)](y(\bar u_{h})-\bar y,\bar u_{h}  - \bar u).\nonumber
\end{align}
\end{proof}
\begin{lemma}\label{l:gradient.1} It holds that
\begin{equation}
\lim_{h \arrow 0} \frac{1}{h^2} |\bigl( (B \bar p +\lambda \bar u) , \Pi_h \bar u-\bar u \bigl)| =0.
\end{equation}
\end{lemma}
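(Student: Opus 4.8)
The plan is to exploit the complementarity between the adjoint--based gradient $\bar d:=B\bar p+\lambda\bar u$ and the optimal control $\bar u$, and then to invoke the superlinear interpolation estimate \eqref{eq:superlinear1}. Two facts will be used repeatedly. First, by the regularity discussed in the remark following Theorem~\ref{t:fonc}, $\bar u$ and $B\bar p$, hence $\bar d$, are Lipschitz continuous on $[0,1]$; let $L$ denote a common Lipschitz constant. Second, the projection characterization \eqref{eq:uproj} shows that $\bar d$ vanishes wherever the control is strictly feasible: if $\alpha<\bar u(x)<\beta$, then $-\tfrac1\lambda B\bar p(x)$ lies in $(\alpha,\beta)$ and the projection acts as the identity, so that $\bar u(x)=-\tfrac1\lambda B\bar p(x)$, i.e.\ $\bar d(x)=0$.

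The computation I would carry out is to split over the mesh, $(B\bar p+\lambda\bar u,\Pi_h\bar u-\bar u)=\sum_{i=1}^n\int_{I_i}\bar d\,(\Pi_h\bar u-\bar u)\,dx$, and to treat each cell $I_i=[x_{i-1},x_i]$ according to whether $\bar u$ is constant on it. If $\bar u\equiv\gamma$ on $I_i$, then $\Pi_h\bar u$ is the affine interpolant of the nodal values $\gamma,\gamma$, so $\Pi_h\bar u=\bar u$ on $I_i$ and the corresponding term vanishes. If $\bar u$ is not constant on $I_i$, then $\bar u(I_i)$ is, by continuity, a nondegenerate subinterval of $[\alpha,\beta]$, so $\bar u$ attains a value in the open interval $(\alpha,\beta)$ at some point $c_i\in I_i$; by the second fact $\bar d(c_i)=0$, whence $|\bar d(x)|\le L|x-c_i|\le Lh$ for all $x\in I_i$, and therefore $\bigl|\int_{I_i}\bar d\,(\Pi_h\bar u-\bar u)\,dx\bigr|\le Lh\int_{I_i}|\Pi_h\bar u-\bar u|\,dx$.

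Summing over all cells (the constant ones contributing nothing) gives
\[
|(B\bar p+\lambda\bar u,\Pi_h\bar u-\bar u)|\le Lh\,\norm{\Pi_h\bar u-\bar u}_{L^1(0,1)}\le Lh\,\norm{\Pi_h\bar u-\bar u}_{L^2(0,1)},
\]
the last inequality being Cauchy--Schwarz on $(0,1)$. Dividing by $h^2$ and recalling that $\bar u\in C^{0,1}([0,1])\subset W^{1,p}(0,1)$ for every $p>1$, the superconvergence estimate \eqref{eq:superlinear1} yields $\tfrac1{h^2}|(B\bar p+\lambda\bar u,\Pi_h\bar u-\bar u)|\le L\,\tfrac1h\norm{\bar u-\Pi_h\bar u}_{L^2(0,1)}\arrow 0$, which is the assertion.

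I expect the only delicate point to be the right way of \emph{combining} the two ingredients. A naive cell-by-cell bound estimates the contribution of each of the (up to $O(1/h)$) cells meeting the active/inactive interface by the interpolation error $O(h^2)$, which only reproduces the order $h$ or $h^2$ and never its little-$o$; the improvement to $o(h^2)$ comes precisely from replacing the count of interface cells by the \emph{global} quantity $\norm{\Pi_h\bar u-\bar u}_{L^1(0,1)}$ and then using that the Lagrange interpolant of a $W^{1,p}$ function ($p>1$) converges superlinearly. If $\omega\subsetneq(0,1)$, then $B\bar p$ and $\bar u$ may jump across $\partial\omega$; as in the remark following Theorem~\ref{t:fonc} we work under the regularity $\bar u,\,B\bar p\in C^{0,1}([0,1])$, under which the argument above applies verbatim.
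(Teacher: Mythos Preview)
Your proof is correct and follows essentially the same approach as the one the paper refers to (\cite[Lemma~4.4]{cas06}) and writes out explicitly in the proof of the stronger Lemma~\ref{l:improved.1}: split the mesh cells according to whether $\bar u$ is constant (equivalently, whether $\bar d=B\bar p+\lambda\bar u$ has a zero), use $\Pi_h\bar u=\bar u$ on constant cells, bound $|\bar d|\le Lh$ on the rest via Lipschitz continuity and the complementarity relation, and finish with the superlinear interpolation estimate \eqref{eq:superlinear1}. The only cosmetic difference is that the paper partitions via $I_h^+$ and $I_h^0$ (cells where $\bar d$ is nonvanishing versus has a zero) rather than via constancy of $\bar u$, but these lead to the same bound $|(B\bar p+\lambda\bar u,\Pi_h\bar u-\bar u)|\le Lh\,\norm{\Pi_h\bar u-\bar u}_{L^1(0,1)}$.
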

\begin{proof}
The proof of this Lemma can be found in \cite[Lemma 4.4]{cas06}.
\end{proof}
\begin{theorem} \label{t:mainresult} Let $\bar u$ be a local solution of  $\mathbf{(P)}$ satisfying second-order sufficient condition \eqref{eq:ssc}. If $\bar u_h$ is a local solution of $\mathbf{(P_h)} $ satisfying Assumption \ref{eq:A1} and  such that $\lim_{h\arrow 0}\norm{\bar u_h-\bar u}=0$ then the following convergence property holds
\begin{equation}\label{eq:estimate}
 \lim_{h\arrow 0}\frac{1}{h}\norm{\bar u_h-\bar u}=0.
\end{equation}
\end{theorem}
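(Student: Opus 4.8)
The plan is to argue by contradiction: suppose $\eqref{eq:estimate}$ fails, so there is a constant $c>0$ and a sequence $h\to 0$ with $\norm{\bar u_h-\bar u}\geq ch$. This is precisely the hypothesis of Lemma \ref{l:left_est}, which gives an $h_0>0$ and $\mu>0$ such that for $h<h_0$
\begin{equation}\label{eq:plan1}
\mu\norm{\bar u_h-\bar u}^2 \leq [\mathcal{L}'(y(\bar u_h),\bar u_h,p(\bar u_h))-\mathcal{L}'(\bar y,\bar u,\bar p)](y(\bar u_h)-\bar y,\bar u_h-\bar u).
\end{equation}
The strategy is to bound the right-hand side of \eqref{eq:plan1} from above by a quantity that is $o(\norm{\bar u_h-\bar u}^2)$ plus $O(\norm{\bar u_h-\bar u}^2\cdot\text{small})$, thereby forcing $\norm{\bar u_h-\bar u}=o(h)$, the desired contradiction.

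First I would split the right-hand side of \eqref{eq:plan1} using the variational inequalities. The term $\mathcal{L}'(\bar y,\bar u,\bar p)(y(\bar u_h)-\bar y,\bar u_h-\bar u)$ contains the continuous variational inequality \eqref{eq:var_ineq}, which via the derivative expression \eqref{eq:d1_Lagr} reads $(\bar y-y_d,w)+\lambda(\bar u,h)-\langle R'(\bar y,\bar u)(w,h),\bar p\rangle = (B\bar p+\lambda\bar u,h)\geq 0$ for feasible directions; similarly $\mathcal{L}'(y(\bar u_h),\bar u_h,p(\bar u_h))(y(\bar u_h)-\bar y,\bar u_h-\bar u)$ relates to the discrete variational inequality \eqref{eq:var_ineqh}. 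The trick, following \cite{cas06}, is that the discrete inequality holds only against discrete test functions, so one inserts $\Pi_h\bar u$: write $\bar u-\bar u_h = (\Pi_h\bar u-\bar u_h) + (\bar u-\Pi_h\bar u)$, apply \eqref{eq:var_ineqh} to the first piece (which is feasible for $\mathbf{(P_h)}$ since $\Pi_h\bar u\in U_{ad,h}$), and control the second piece by $(B\bar p_h+\lambda\bar u_h,\bar u-\Pi_h\bar u)$. This last quantity is handled by Lemma \ref{l:gradient.1} together with the adjoint-error corollary \eqref{eq:adj_order}: $(B\bar p_h+\lambda\bar u_h,\bar u-\Pi_h\bar u) = (B\bar p+\lambda\bar u,\bar u-\Pi_h\bar u) + (B(\bar p_h-\bar p),\bar u-\Pi_h\bar u)$, where the first term is $o(h^2)$ by Lemma \ref{l:gradient.1} and the second is bounded by $c(h^2+\norm{\bar u-\bar u_h})\cdot\norm{\bar u-\Pi_h\bar u}$, hence $o(h)\cdot\norm{\bar u-\bar u_h}+o(h^2)$ using \eqref{eq:superlinear1}.

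Collecting the estimates, \eqref{eq:plan1} becomes, for $h$ small,
\begin{equation}\label{eq:plan2}
\mu\norm{\bar u_h-\bar u}^2 \leq o(h)\,\norm{\bar u_h-\bar u} + o(h^2),
\end{equation}
from which Young's inequality (absorbing $o(h)\norm{\bar u_h-\bar u}\leq \tfrac{\mu}{2}\norm{\bar u_h-\bar u}^2 + o(h^2)$) yields $\norm{\bar u_h-\bar u}^2 \leq o(h^2)$, i.e. $\tfrac1h\norm{\bar u_h-\bar u}\to 0$ along the subsequence, contradicting $\norm{\bar u_h-\bar u}\geq ch$. The main obstacle I anticipate is the bookkeeping in passing from the abstract difference of Lagrangian derivatives in \eqref{eq:plan1} to the concrete form \eqref{eq:plan2}: one must carefully track which adjoint state ($\bar p$, $\bar p_h$, or $p(\bar u_h)$) appears in each trilinear term, use the state and adjoint equations \eqref{eq:burg1}, \eqref{eq:adj_p}, \eqref{eq:adj_uh} to cancel the nonlinear contributions (much as in the derivation of \eqref{eq:error.0}), and repeatedly invoke the $H^1$ state error bound \eqref{eq:burg_est_h1} and adjoint bound \eqref{eq:adj_order} to show the residual trilinear terms $b(\cdot,\cdot,p(\bar u_h)-\bar p)$ are of order $h\cdot\norm{\bar u_h-\bar u}$ or smaller. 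Once that reduction is done cleanly, the contradiction is immediate.
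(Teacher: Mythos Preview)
Your proposal is correct and follows essentially the same route as the paper: argue by contradiction to invoke Lemma~\ref{l:left_est}, bound the right-hand side by inserting $\Pi_h\bar u$ into the discrete variational inequality~\eqref{eq:var_ineqh}, and then control the residual terms via Lemma~\ref{l:gradient.1}, the interpolation estimate~\eqref{eq:superlinear1}, and the adjoint bound~\eqref{eq:adj_order}. The only cosmetic difference is that the paper, after reaching the analogue of your inequality~\eqref{eq:plan2}, divides through by $h\norm{\bar u_h-\bar u}$ (using $\norm{\bar u_h-\bar u}\ge ch$) and passes to the limit, whereas you propose Young's inequality; both yield the same contradiction.
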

\begin{proof}
As in \cite{cas06}, the proof is argued by contradiction. Let us assume that \eqref{eq:estimate} is false, therefore there exist a constant $c>0$  and a subsequence $(\bar u_{h})_{h>0}$ such that the relation 
\begin{equation} \label{eq:noestimate}
\norm{\bar u_{h} - \bar u} \geq c h,
\end{equation}
holds for all $h>0$ sufficiently small. From Lemma \ref{l:left_est} there exists $h_0>0$ such that for all $h<h_0$ the estimate \eqref{eq:left_est} holds.

We proceed to estimate the right hand side of \eqref{eq:left_est}. Since $\bar u$ and $\bar u_h$ are local solutions for  $\mathbf{(P)}$ and $\mathbf{(P_h)}$  respectively, then they satisfy the first order necessary conditions given by Theorems \ref{t:fonc} and \ref{t:fonch}.  We observe that $\bar u_h $ is feasible  for $\mathbf{(P)}$ and $\Pi_h \bar u$ is feasible for $\mathbf{(P)}$. Therefore, taking  $u=\bar u_h$ in \eqref{eq:var_ineq} and   $u=\bar u$ in \eqref{eq:var_ineqh} it comes
\begin{align}
&(B\bar p +\lambda \bar u, \bar u_h - \bar u) \geq 0 , \text{and} \label{eq:vi1} \\
&(B\bar p_h +\lambda \bar u_h, \Pi_h\bar u - \bar u_h) \geq 0 \label{eq:vi2},
\end{align}
Then, inequalities \eqref{eq:vi1}  and \eqref{eq:vi2} imply that
\begin{align}
\mathcal{L'}&(\bar y_h,\bar u_h, \bar p_h)(\bar y_h-\Pi_h \bar y, \bar u_h - \Pi_h  \bar u)  - \mathcal{L'}(\bar y,\bar u, \bar p)(\bar y_h-\bar y, \bar u_h - \bar u)& \nonumber  \\
=& (\bar p_h, B(\bar u_h -\Pi_h \bar u))+ \lambda (\bar u_h, \bar u_h - \Pi \bar u) - (\bar p, B(\bar u_h -\bar u))- \lambda (\bar u, \bar u_h - \bar u) \nonumber \\
=& (B \bar p_{h} + \lambda \bar{u}_{h}, \bar u_{h}-\Pi_{h}\bar u  ) - (B \bar p + \lambda u, \bar u_{h}-u) \leq 0 \label{eq:erro.0a} .
 \end{align}
 With the help of \eqref{eq:erro.0a} we estimate
\begin{align}
[\mathcal{L'}&(y(\bar u_{h}),\bar u_{h}, p(\bar u_{h}))-\mathcal{L'}(\bar y,\bar u, \bar p)](y(\bar u_{h})-\bar y,\bar u_{h} - \bar u) \nonumber\\
=&\mathcal{L'}(y(\bar u_h),\bar u_h,  p(\bar u_h))(y(\bar u_h)-\bar y,\bar u_h - \bar u ) -\mathcal{L'}(\bar y_h,\bar u_h, \bar p_h)( \bar y_h -\Pi_h \bar y, \bar u_h -\Pi_h \bar u) \nonumber \\
&+\mathcal{L'}(\bar y_h,\bar u_h, \bar p_h)( \bar y_h -\Pi_h \bar y, \bar u_h -\Pi_h \bar u)-\mathcal{L'}(\bar y,\bar u, \bar p)(y(\bar u_h)-\bar y,\bar u_h - \bar u) \nonumber \\
\leq& (y(\bar u_h) - y_d,y(\bar u_h)-\bar y ) + \lambda (\bar u_h,\bar u_h -\bar u)  -  \langle R'(\bar y(\bar u_h), \bar u_h) (y(\bar u_h)- \bar y, \bar u_h - \bar u ), p(\bar u_h) \rangle\nonumber \\
&-( \bar y_h -y_d, \bar y_h- \Pi_h y) - \lambda ( \bar u_h, \bar u_h-\Pi_h \bar u)  + \langle R'(\bar y_h, \bar u_h) (\bar y_h-\Pi_h \bar y, \bar u_h- \Pi_h \bar u), \bar p_h \rangle\nonumber 
\end{align}

since $\bar p_{h}$ and  $p (\bar u_{h})$ satisfy adjoint equations  \eqref{eq:adj_h} and \eqref{eq:adj_uh} we have
\begin{align}
=& (B p(\bar u_{h}), \bar u_{h} -\bar u)+\lambda (\bar u_{h}, \bar u_{h} -\bar u) - ( B \bar p_{h}, \bar u_{h}-\Pi_{h} \bar u) - \lambda ( \bar u_{h}, \bar u_{h}- \Pi_{h}\bar u) \nonumber\\
=&   (\lambda \bar u_{h}+B \bar p_{h},  \Pi_{h} \bar u- \bar u)+ ( B(p(\bar u_{h})- \bar p_{h}), \bar u_{h}-\bar u) \nonumber \\
=& (\lambda  (\bar u_{h}-\bar u)+B(\bar p_{h} -\bar p),  \Pi_{h} \bar u- \bar u)+ (\lambda \bar u + B \bar p,\Pi_{h} \bar u- \bar u )+ ( B(p(\bar u_{h})- \bar p_{h}), \bar u_{h}-\bar u) \label{eq:error.0b}
\end{align}
Now, from our assumption \eqref{eq:noestimate} we apply Lemma \ref{l:left_est} to \eqref{eq:error.0b} we get
\begin{align}
\mu \norm{\bar u_h -\bar u}^2 &\leq 
[\mathcal{L'}(y(\bar u_{h}),\bar u_{h}, p(\bar u_{h}))-\mathcal{L'}(\bar y,\bar u, \bar p)](y(\bar u_{h})-\bar y,\bar u_{h} - \bar u) \nonumber\\
&\leq 
 (\lambda  (\bar u_{h}-\bar u)+B(\bar p_{h} -\bar p),  \Pi_{h} \bar u- \bar u)+ (\lambda \bar u + B \bar p,\Pi_{h} \bar u- \bar u )\nonumber\\
 &\quad+ ( B(p(\bar u_{h})- \bar p_{h}), \bar u_{h}-\bar u) \label{eq:error.0b1}
 \end{align}
 According to the estimate \eqref{eq:adj_order}, from \eqref{eq:error.0b1} we have
 \begin{align}
\mu \norm{\bar u_h -\bar u}^2 &  
\leq (\lambda \norm{\bar u_h -\bar u} + \norm{\bar p_{h} -\bar p} )\norm{\Pi_{h} \bar u- \bar u } + (\lambda \bar u + B \bar p,\Pi_{h} \bar u- \bar u ) \nonumber\\
&\quad+\norm{ p_{h}(\bar u_h) -\bar p_h}\norm{\bar u_h -\bar u} \nonumber\\
& \leq c_{1} (\norm{\bar u -\bar u_h}+h^2) \norm{\Pi_{h} \bar u- \bar u } +  (\lambda \bar u + B \bar p,\Pi_{h} \bar u- \bar u )+c_{2}h^{2}\norm{ \bar u_{h}-\bar u} \label{eq:error.0b2}
\end{align}
next, dividing the last relation by $ h\norm{\bar u_{h}-\bar u}$, then \eqref{eq:noestimate} implies
\begin{align}
\mu \norm{\bar u_{h}-\bar u} \leq &c \frac{\norm{  \Pi_{h} \bar u- \bar u }}{h}
+\frac{(\lambda \bar u + B \bar p,\Pi_{h} \bar u- \bar u ) }{h^{2}}
+ c_{2}h,
\end{align}
for some constant $c>0$ independent of $h$. Finally,  in view of Lemma \ref{l:gradient.1} and the relation \eqref{eq:superlinear1} we divide by $h$ and pass to the limit
\begin{align}
\mu \lim_{k \arrow \infty} 
\frac{1}{h}\norm{\bar u_{h}-\bar u}
 \leq &  \lim_{k \arrow \infty} \biggl( c \frac{\norm{  \Pi_{h} \bar u- \bar u }}{h}
+\frac{(\lambda \bar u + B \bar p,\Pi_{h} \bar u- \bar u ) }{h^{2}}\biggl)=0. \nonumber 
\end{align}
This contradicts our assumption \eqref{eq:noestimate}, and therefore the statement of Theorem \ref{t:mainresult} is true.
\end{proof}
\subsection{An improved error estimate} We make a further error analysis by taking into account a stronger assumption on the estructure of the optimal control which allow us to derive a better interpolation error in the $L^1$--norm which is crucial to make an improvement in the overall error estimate. The following assumption was proposed by R\"osch in \cite{roe06}, and guarantees that $\bar u$ is Lipschitz continuous and piecewise of class $C^2$ on the domain $\om=(0,1)$. 
\begin{assumption} \label{eq:A2} There exists a finite number of points $t_k \in[0,1]$, for $k=0,\ldots, N$ such that $t_0=0$ and $t_N=1$, such that the optimal control $\bar u \in C^2[t_{k-1},t_{k}]$ for all $k=1,\ldots,N$.
\end{assumption}
The following interpolation error is a consequence of the  last assumption and its proved in \cite[ Lemma 3]{roe06}.
\begin{lemma}\label{l:improved.0}
Under Assumption \ref{eq:A2} there exists a positive constant $c$, such that the following bound for the interpolation error
\begin{align}\label{eq:improved.0}
\norm{\bar u -\Pi_{h}\bar u} \leq c h^{3/2},
\end{align}
holds.
\end{lemma}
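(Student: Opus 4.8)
The plan is to exploit the piecewise $C^2$ structure of $\bar u$ given by Assumption \ref{eq:A2} together with the fact that $\bar u$ is globally Lipschitz. I would split the $L^2$-norm of the interpolation error over the mesh intervals $I_i$ into two classes: the "good" intervals $I_i$ that lie entirely inside one of the smoothness pieces $[t_{k-1},t_k]$, and the "bad" intervals that contain one of the finitely many breakpoints $t_k$. On a good interval, $\bar u \in C^2(I_i)$, so the standard one-dimensional interpolation estimate from Lemma \ref{l:proj_err} (with $m=0$, $q=2$, $k=1$, $p=\infty$, $n=1$) gives $\norm{\bar u-\Pi_h\bar u}_{L^2(I_i)} \le C h^{2}\,\|\bar u''\|_{L^\infty(I_i)} \le C h^{2}$. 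Summing the squares over the $O(1/h)$ good intervals yields $\sum_{\text{good }i}\norm{\bar u-\Pi_h\bar u}_{L^2(I_i)}^2 \le C h^{4}\cdot (1/h) = C h^{3}$, i.e. an $h^{3/2}$ contribution — in fact this part is even better than needed.

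The essential point is the bad intervals. There are at most $N$ of them (one per breakpoint $t_k$, possibly two if $t_k$ is itself a mesh node, but the count stays bounded independently of $h$). On such an interval $\bar u$ need not be $C^2$, but it is Lipschitz with some global constant $L$, and $\Pi_h$ reproduces the nodal values, so $\|\bar u-\Pi_h\bar u\|_{L^\infty(I_i)} \le \operatorname{osc}_{I_i}\bar u + \operatorname{osc}_{I_i}\Pi_h\bar u \le C L h$. Hence $\norm{\bar u-\Pi_h\bar u}_{L^2(I_i)}^2 \le (C L h)^2 \cdot h = C h^{3}$, and summing over the at most $N$ bad intervals gives a total of at most $N C h^{3}$. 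Combining the two classes, $\norm{\bar u-\Pi_h\bar u}^2 \le C h^{3}$, and taking square roots delivers \eqref{eq:improved.0}.

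The step I expect to be the only mildly delicate one is the bookkeeping that each mesh interval meets at most one breakpoint once $h$ is small enough (so that $h < \min_k (t_k - t_{k-1})$), which is what keeps the number of bad intervals bounded by a constant and prevents the error from being polluted. Everything else is a direct application of Lemma \ref{l:proj_err} on the good intervals and the Lipschitz bound on the bad ones; since the paper only needs the $h^{3/2}$ rate, there is no need to track the sharper $h^{2}$ available away from the breakpoints. I would also remark, as in \cite{roe06}, that one could alternatively bound the $L^1$-interpolation error by $c h^{2}$ on the good part and $c h^{2}$ on the bad part and then interpolate between $L^1$ and $L^\infty$, but the direct interval-splitting argument above is the most transparent route to \eqref{eq:improved.0}.
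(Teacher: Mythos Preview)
Your proposal is correct; the paper itself does not give a proof but merely cites \cite[Lemma~3]{roe06}, and your interval-splitting argument (full $C^2$ interpolation estimate on the ``good'' intervals, Lipschitz bound on the finitely many ``bad'' intervals containing a breakpoint) is exactly the standard argument from that reference. One harmless slip: with the parameters $m=0$, $q=2$, $k=1$, $p=\infty$, $n=1$ in Lemma~\ref{l:proj_err} the exponent is $n(1/q-1/p)+k+1-m=5/2$, not $2$, so the good-interval contribution to the sum of squares is actually $O(h^4)$ rather than $O(h^3)$---this only strengthens your estimate, and the bad intervals remain the bottleneck that produces the $h^{3/2}$ rate.
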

\begin{lemma}\label{l:improved.1}
Under Assumption \ref{eq:A2} there exists a positive constant $c$, such that the following estimate follows:
\begin{align}\label{eq:improved.1}
|(\lambda \bar u + B \bar p, \Pi_h \bar u -\bar u)| \leq ch^3.
\end{align}
\end{lemma}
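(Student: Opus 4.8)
The plan is to exploit the pointwise structure of $\bar u$ on each subinterval $[t_{k-1},t_k]$ given by Assumption \ref{eq:A2} together with the variational inequality \eqref{eq:var_ineq} and the regularity of $\bar p$. First I would split the integral $(\lambda\bar u + B\bar p,\Pi_h\bar u - \bar u)$ into a sum over the mesh cells $I_i=[x_{i-1},x_i]$, and classify the cells into two types: those that are contained in the interior of some regularity interval $[t_{k-1},t_k]$ \emph{and} on which the active set $\{x: \bar u(x)=\alpha\}\cup\{x:\bar u(x)=\beta\}$ either fills the whole cell or is empty, versus the finitely many "bad" cells that either contain one of the breakpoints $t_k$ or meet the boundary of the active set. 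The number of bad cells is bounded independently of $h$, so their contribution will be controlled by an elementwise estimate of the form $|I_i|\cdot\|\lambda\bar u + B\bar p\|_{L^\infty(I_i)}\cdot\|\Pi_h\bar u-\bar u\|_{L^\infty(I_i)}$, which is $O(h)\cdot O(1)\cdot O(h^{3/2})$... but that only gives $h^{5/2}$, so this is not yet enough and I must be more careful — see below.

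On a "good" cell that is strictly inactive, the projection formula \eqref{eq:uproj} gives $\bar u = -\tfrac1\lambda B\bar p$ there, hence $\lambda\bar u + B\bar p \equiv 0$ on $I_i$ and the cell contributes nothing. On a "good" cell that is fully active, $\bar u$ is constant (equal to $\alpha$ or $\beta$), so $\Pi_h\bar u - \bar u = 0$ on $I_i$ and again the contribution vanishes. Thus only the bad cells matter, and the key is that on a bad cell I should \emph{not} bound $\|\lambda\bar u+B\bar p\|_{L^\infty(I_i)}$ by a constant but rather by $O(h)$: indeed, $\lambda\bar u + B\bar p$ vanishes on the inactive part of a neighbouring good region, it is Lipschitz (since $\bar u$ is Lipschitz and $\bar p\in C^{1,1/2}$), and each bad cell is within distance $O(h)$ of a point where $\lambda\bar u+B\bar p=0$ (either inside a strictly inactive good cell, or at a point of the active set where the complementarity $\lambda\bar u + B\bar p=0$ still holds on the boundary of activity). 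Therefore $\|\lambda\bar u+B\bar p\|_{L^\infty(I_i)} = O(h)$ on each bad cell. Combining, each bad cell contributes $|I_i|\cdot O(h)\cdot\|\Pi_h\bar u-\bar u\|_{L^\infty(I_i)} = O(h)\cdot O(h)\cdot O(h)=O(h^3)$ — here I use the local interpolation bound $\|\Pi_h\bar u-\bar u\|_{L^\infty(I_i)}\le Ch\,\|\bar u'\|_{L^\infty(I_i)}=O(h)$ valid since $\bar u$ is Lipschitz on $\om$. Summing over the $O(1)$ bad cells gives the claimed bound $|(\lambda\bar u+B\bar p,\Pi_h\bar u-\bar u)|\le ch^3$.

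The main obstacle I anticipate is making rigorous the claim that $\lambda\bar u + B\bar p = O(h)$ on the transitional cells — i.e. on cells straddling the free boundary between an active and an inactive region. This requires a careful use of the complementarity structure: where $\bar u = \alpha$ one has $\lambda\bar u + B\bar p\ge 0$, where $\bar u$ is strictly interior one has $\lambda\bar u + B\bar p = 0$, and by continuity of $\bar p$ (hence of $\lambda\bar u + B\bar p$ on the inactive side, extended by the Lipschitz continuity of $\bar u$) the function $\lambda\bar u+B\bar p$ is continuous across the free boundary and vanishes there; its Lipschitz modulus then controls its size within an $O(h)$ neighbourhood. One must also handle the cells containing a breakpoint $t_k$ of the piecewise-$C^2$ structure: there $\bar u$ is still Lipschitz, so the interpolation error is still $O(h)$ on that cell, and the same $O(h)$ bound on $\lambda\bar u + B\bar p$ applies since $t_k$ lies in the closure of a region where this quantity is either identically zero or vanishes at an endpoint. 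Once these elementwise estimates are in place, the summation over finitely many cells is routine. This argument is essentially the one-dimensional refinement of \cite[Lemma 4.5]{cas06}, adapted using Lemma \ref{l:improved.0}.
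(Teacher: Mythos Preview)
Your argument is correct and shares the paper's two key observations: (i) on cells where $|\lambda\bar u+B\bar p|>0$ throughout, $\bar u$ is constant so $\Pi_h\bar u-\bar u=0$; and (ii) on the remaining cells the factor $\lambda\bar u+B\bar p$ is $O(h)$ by Lipschitz continuity, since each such cell contains (or is adjacent to) a zero. The organisation of the final step differs, however. The paper does \emph{not} assert that only finitely many cells contribute; instead it pulls out the uniform bound $|\lambda\bar u+B\bar p|\le Lh$ on all of $I_h^0$ and then estimates $\|\Pi_h\bar u-\bar u\|_{L^1(0,1)}\le ch^2$ directly from Assumption~\ref{eq:A2} (this is where the piecewise-$C^2$ regularity is used, via the argument of \cite[Lemma~3]{roe06}): on the $O(1/h)$ mesh cells lying inside a regularity interval one gets $O(h^2)$ per cell in $L^\infty$, hence $O(h^3)$ in $L^1$ per cell, while on the at most $N-1$ cells straddling a breakpoint $t_k$ one has only the Lipschitz bound $O(h)$ in $L^\infty$, i.e.\ $O(h^2)$ in $L^1$ per cell. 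Summing gives $\|\Pi_h\bar u-\bar u\|_{L^1}=O(h^2)$ and the product $Lh\cdot O(h^2)=O(h^3)$.

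Your route localises instead: you claim finitely many ``bad'' cells and bound each by $h\cdot O(h)\cdot O(h)=O(h^3)$ using only the Lipschitz interpolation estimate. This is slightly more direct, but note that the finiteness of bad cells requires reading Assumption~\ref{eq:A2} as saying the $t_k$ are precisely the switching points of the active set---the natural intended meaning, but not literally what is written. The paper's use of the $L^1$ interpolation bound sidesteps this: it never needs to count switching points, only the $N$ regularity breakpoints. If you want to keep your counting argument, you could alternatively note that on any transitional cell lying \emph{inside} some $[t_{k-1},t_k]$ the $C^2$ interpolation bound gives $O(h^2)$ rather than $O(h)$, so those cells contribute $O(h^4)$ each and may safely number $O(1/h)$; only the $\le N$ cells containing a $t_k$ need the cruder $O(h^3)$ bound.
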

\begin{proof} The proof is done along the lines of the proof of Lemma 4.4 in \cite{cas06}, in combination with the arguments of Lemma 3 in \cite{roe06}. In our mesh $\mathcal{I}_h$, we consider the following sets 
\begin{align}
I_h^+ &= \{ i:  |\lambda \bar u (x) + B \bar p(x)|>0, \, \forall x \in I_i \in \mathcal{I}_h\}, \,\text{and} \nonumber \\
I_h^0 &= \{ i:  \exists\, \xi_i \in I_i, \text{such that }  \lambda \bar u (\xi_i) + B \bar p(\xi_i)=0\}. \nonumber
\end{align}
Notice that $\lambda \bar u + B \bar p \in C^{0,1}$ by the regularity of $\bar u$ and $\bar p$. Moreover, from the variational inequality \eqref{eq:var_ineq} we have the characterization
\begin{align}
\lambda \bar u(x) + B \bar p(x) \geq 0 & \,\text{if }  \bar u (x) = \alpha, \nonumber \\
\lambda \bar u(x) + B \bar p(x)  \leq 0 & \,\text{if } \bar u (x) = \beta,  \nonumber\\
\lambda \bar u(x) + B \bar p(x)  = 0 & \,\text{if } \alpha <\bar u (x) <\beta .  \nonumber
\end{align}

 Therefore, if $i \in I_h^+$ we have that $\lambda \bar u(x) + B \bar p(x) \not =0$ for all $x \in I_i$, and thus $\bar u(x)=\alpha$ or $\bar u(x)=\beta$ accordingly, which in turn implies that $\Pi_h \bar u(x) = \bar u(x)$  for all $x \in I_i$. From this observation we have:
 \begin{align}\label{eq:improved.11}
 |(\lambda \bar u + B \bar p, \Pi_h \bar u -\bar u)| &= \left|\int_0^1 (\lambda \bar u(x) + B \bar p(x))( \Pi_h \bar u(x) -\bar u(x)) \,dx \right| \nonumber \\
 & = \left| \sum_{i =1}^n\int_{I_i} (\lambda \bar u(x) + B \bar p(x))( \Pi_h \bar u(x) -\bar u(x)) \,dx \right| \nonumber\\
 & = \left| \sum_{i \in I_h^0}\int_{I_i} (\lambda \bar u(x) + B \bar p(x))( \Pi_h \bar u(x) -\bar u(x)) \,dx \right| \nonumber\\
 & \leq \sum_{i \in I_h^0}\int_{I_i} |\lambda \bar u(x) + B \bar p(x) - \lambda \bar u(\xi_i) - B \bar p(\xi_i) | \, | \Pi_h \bar u(x) -\bar u(x) | \,dx. 
 \end{align}
Since $\lambda \bar u + B \bar p \in C^{0,1}$.  By denoting its Lipschitz constant by $\tilde L$; from \eqref{eq:improved.11} we have that
 \begin{align}
 |(\lambda \bar u + B \bar p, \Pi_h \bar u -\bar u)| & \leq \sum_{i \in I_h^0}\int_{I_i} L|x-\xi_i| \, | \Pi_h \bar u(x) -\bar u(x) | \,dx.  \nonumber\\
 & \leq Lh \sum_{i \in I_h^0} \int_{I_i}| \Pi_h \bar u(x) -\bar u(x) | \, dx.  \nonumber\\
 & \leq Lh  \int_{0}^1| \Pi_h \bar u(x) -\bar u(x) | \, dx.  \label{eq:improved.2}
 \end{align}
Now, consider the integrand on the right--hand side of \eqref{eq:improved.2}. By  Assumption \ref{eq:A2}, we distinguish the intervals $[t_{i-1}, t_{i}]$ between the class $\mathcal{I}_1$ containing the intervals where $\bar u \in C^2[t_{i-1}, t_{i}]$ and the class  $\mathcal{I}_2$ formed by the remaining intervals where $\bar u$ is only Lipschitz. From interpolation error for piecewise linear functions in one dimension, we get:
\begin{align}
\int_{0}^1| \Pi_h \bar u(x) -\bar u(x) | \, dx& = \sum_{i=1}^N \int_{t_{i-1}}^{t_{i}} | \Pi_h \bar u(x) -\bar u(x) |\, dx \nonumber \\
&= \sum_{I_1} \int_{t_{i-1}}^{t_{i}} | \Pi_h \bar u(x) -\bar u(x) |\, dx  + \sum_{I_2} \int_{t_{i-1}}^{t_{i}} | \Pi_h \bar u(x) -\bar u(x) |\, dx \nonumber \\
&\leq \sum_{I_1} \frac{\norm{\bar u''(\zeta_i)}}{8} h^2 \,h+ \sum_{I_2} \frac{L}{4}h\,h \, dx \nonumber\\
&\leq \sum_{I_1} \max_{1\leq i \leq N}\frac{\norm{\bar u''(\zeta_i)}}{8} h^3 \, h + (N-1)\frac{L}{2} h^2 \, dx  \label{eq:improved.3}
\end{align}
where $\zeta_i \in (t_{i-1}, t_{i})$. Note that by the Assumption \ref{eq:A2} the class $I_2$ contains at most $N-1$ intervals and that $N$ is independent of $h$. Therefore, from \eqref{eq:improved.2} and \eqref{eq:improved.3} we deduce the estimate \eqref{eq:improved.1}.
\end{proof}
\begin{theorem} \label{t:improved_result} Let $\bar u$ be a local solution of  $\mathbf{(P)}$ satisfying second-order sufficient condition \eqref{eq:ssc}. If $\bar u_h$ is a local solution of $\mathbf{(P_h)} $  such that $\lim_{h\arrow 0}\norm{\bar u_h-\bar u}=0$; under  Assumptions \ref{eq:A1} and \ref{eq:A2} then the following  error estimate holds
\begin{equation}\label{eq:estimate_improved}
\norm{\bar u_h-\bar u} \leq ch^{3/2}
\end{equation}
\end{theorem}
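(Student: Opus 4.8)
The plan is to follow the contradiction scheme of the proof of Theorem~\ref{t:mainresult}, but to feed in the sharper interpolation bounds of Lemmas~\ref{l:improved.0} and~\ref{l:improved.1} wherever the bare superlinear estimate~\eqref{eq:superlinear1} was used. Suppose \eqref{eq:estimate_improved} fails. Then, passing to a suitable subsequence (still denoted by $h$) and diagonalizing in the constant, there is a subsequence with $h\to 0$ and $\norm{\bar u_h-\bar u}/h^{3/2}\to\infty$, so that in particular $\norm{\bar u_h-\bar u}\geq h^{3/2}$ for all small $h$ along it. Since the hypotheses of Theorem~\ref{t:mainresult} are in force we already know $\norm{\bar u_h-\bar u}=o(h)$; hence the hypothesis $\norm{\bar u_h-\bar u}\geq ch$ of Lemma~\ref{l:left_est} is \emph{not} available, and the first task is to re-derive the estimate~\eqref{eq:left_est} under the weaker assumption $\norm{\bar u_h-\bar u}\geq h^{3/2}$.

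To that end I would revisit the proof of Lemma~\ref{l:left_est}. The only place the stronger hypothesis entered there is in showing that the weak limit $v$ of $v_h:=(\bar u_h-\bar u)/\norm{\bar u_h-\bar u}$ vanishes on $\om_\tau$, and that step rested on bounding $\tfrac{1}{\norm{\bar u_h-\bar u}}(B\bar p+\lambda\bar u,\Pi_h\bar u-\bar u)$ by $\norm{B\bar p+\lambda\bar u}\,\norm{\Pi_h\bar u-\bar u}/\norm{\bar u_h-\bar u}$. Under Assumption~\ref{eq:A2} this term is instead controlled by Lemma~\ref{l:improved.1}:
\[
\frac{1}{\norm{\bar u_h-\bar u}}\bigl|(B\bar p+\lambda\bar u,\Pi_h\bar u-\bar u)\bigr|\leq\frac{c\,h^{3}}{h^{3/2}}=c\,h^{3/2}\to 0 ,
\]
while the remaining contributions to $(B\bar p+\lambda\bar u,v)$ are handled as before: using $\norm{\Pi_h\bar u-\bar u}\leq c\,h^{3/2}$ (Lemma~\ref{l:improved.0}), $\norm{\bar u_h-\bar u}\geq h^{3/2}$, and the adjoint bound~\eqref{eq:adj_order}, the quotient $\norm{\Pi_h\bar u-\bar u}/\norm{\bar u_h-\bar u}$ stays bounded and is multiplied by a factor tending to $0$. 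Hence $v\in C^\tau_{\bar u}$, and passing to the limit in~\eqref{eq:lineq_wk2} gives $R'(\bar y,\bar u)(z,v)=0$ exactly as in Lemma~\ref{l:left_est}. Running the rest of that argument (identity~\eqref{eq:error.1ab} and the second-order condition~\eqref{eq:ssc}, with the case $v=0$ settled via $\lambda>0$ and $\norm{v_h}=1$, as in the contradiction closing Theorem~\ref{t:ssc}) then yields some $\mu>0$ with
\[
\mu\norm{\bar u_h-\bar u}^{2}\leq\bigl[\mathcal{L}'(y(\bar u_h),\bar u_h,p(\bar u_h))-\mathcal{L}'(\bar y,\bar u,\bar p)\bigr]\bigl(y(\bar u_h)-\bar y,\bar u_h-\bar u\bigr)
\]
for all small $h$ along the subsequence.

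Next, I estimate the right-hand side verbatim as in the proof of Theorem~\ref{t:mainresult}, which produces inequality~\eqref{eq:error.0b2},
\[
\mu\norm{\bar u_h-\bar u}^{2}\leq c_1\bigl(\norm{\bar u-\bar u_h}+h^{2}\bigr)\norm{\Pi_h\bar u-\bar u}+(\lambda\bar u+B\bar p,\Pi_h\bar u-\bar u)+c_2h^{2}\norm{\bar u_h-\bar u}.
\]
Inserting $\norm{\Pi_h\bar u-\bar u}\leq c\,h^{3/2}$ (Lemma~\ref{l:improved.0}) and $|(\lambda\bar u+B\bar p,\Pi_h\bar u-\bar u)|\leq c\,h^{3}$ (Lemma~\ref{l:improved.1}) gives
\[
\mu\norm{\bar u_h-\bar u}^{2}\leq c\,h^{3/2}\norm{\bar u_h-\bar u}+c\,h^{7/2}+c\,h^{3}+c_2h^{2}\norm{\bar u_h-\bar u},
\]
and absorbing the two mixed terms by Young's inequality, $a\,h^{3/2}\norm{\bar u_h-\bar u}\leq\tfrac{\mu}{4}\norm{\bar u_h-\bar u}^{2}+\tfrac{a^{2}}{\mu}h^{3}$ and likewise for $c_2h^{2}\norm{\bar u_h-\bar u}$, leaves $\tfrac{\mu}{2}\norm{\bar u_h-\bar u}^{2}\leq C(h^{3}+h^{7/2}+h^{4})\leq C'h^{3}$ for $h$ small. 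Hence $\norm{\bar u_h-\bar u}\leq C''h^{3/2}$ with $C''$ independent of $h$, contradicting $\norm{\bar u_h-\bar u}/h^{3/2}\to\infty$; this proves~\eqref{eq:estimate_improved}.

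I expect the main obstacle to be the middle step, namely checking carefully that Lemma~\ref{l:left_est} survives the weakening of its hypothesis from $\norm{\bar u_h-\bar u}\geq ch$ to $\norm{\bar u_h-\bar u}\geq h^{3/2}$: one must verify that \emph{every} quotient carrying $\norm{\bar u_h-\bar u}$ in the denominator in the $v|_{\om_\tau}=0$ argument still tends to zero, and it is exactly there that the cubic bound of Lemma~\ref{l:improved.1} (rather than~\eqref{eq:superlinear1}) is indispensable, together with the fact that Assumption~\ref{eq:A2} upgrades $\norm{\Pi_h\bar u-\bar u}=o(h)$ to $O(h^{3/2})$. The closing absorption argument is routine.
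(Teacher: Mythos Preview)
Your argument is correct, and the final estimation via \eqref{eq:error.0b2}, Lemmas~\ref{l:improved.0}--\ref{l:improved.1}, and Young's inequality is exactly what the paper does. The only substantive difference lies in how you secure the coercivity inequality \eqref{eq:left_est}, i.e., how you show that the weak limit $v$ of $v_h=(\bar u_h-\bar u)/\norm{\bar u_h-\bar u}$ lies in the critical cone $C^\tau_{\bar u}$.

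You proceed by contradiction: assuming $\norm{\bar u_h-\bar u}/h^{3/2}\to\infty$ yields the lower bound $\norm{\bar u_h-\bar u}\geq h^{3/2}$, and then the cubic bound of Lemma~\ref{l:improved.1} (together with Lemma~\ref{l:improved.0}) makes the quotients in the $(B\bar p+\lambda\bar u,v)\leq 0$ step of Lemma~\ref{l:left_est} vanish. The paper instead argues \emph{directly}, without any contradiction hypothesis or lower bound on $\norm{\bar u_h-\bar u}$: it observes that on $\om_\tau$ the optimal control is active, $\bar u\equiv\alpha$ or $\bar u\equiv\beta$, hence $\Pi_h\bar u-\bar u=0$ there; combined with the discrete variational inequality \eqref{eq:var_ineqh} this forces $\int_{\om_\tau}(B\bar p+\lambda\bar u)\,v\,dx\leq 0$ (and $\geq 0$ by the sign structure), so $v=0$ on $\om_\tau$. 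This structural shortcut removes the need for the contradiction frame and for invoking Lemma~\ref{l:improved.1} at this stage; the paper then arrives at \eqref{eq:left_est2} and finishes with Young's inequality exactly as you do. Your route is a natural and legitimate continuation of the scheme of Theorem~\ref{t:mainresult}; the paper's is shorter because it exploits directly that interpolation is exact on the active set. You are also slightly more careful than the paper in flagging the $v=0$ case, which indeed is handled by the $\lambda>0$, $\norm{v_h}=1$ argument from Theorem~\ref{t:ssc}.
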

\begin{proof}
Analogous to the proof of Lemma \ref{l:left_est}, we have a pair of sequences  $z_{h}=\ds\frac{y(\bar u_{h})-\bar y}{\norm{\bar u_{h}-\bar u}}$ and  $v_{h}=\ds\frac{\bar u_{h}-\bar u}{\norm{\bar u_{h}-\bar u}}$ such that $v_{h} \rightharpoonup v$ in $L^2(0,1)$ and $z_{h} \rightharpoonup z$  in $H_0^1(0,1)$ when $h \arrow 0$. By construction of $v_h$, it is easy to see that $v$ satisfies the sign condition \eqref{eq:crit_cone_b} and \eqref{eq:crit_cone_c}. To verify that the condition \eqref{eq:crit_cone_a} is satisfied by $v$, we first observe that $\int_{\om_\tau} (B\bar p(x) + \lambda \bar u(x))( \bar u_h(x) -\bar u(x))\, dx \geq 0$, implying that  $\int_{\om_\tau} (B\bar p(x) + \lambda \bar u(x))v\, dx \geq 0$. 

On the other hand,
\begin{align}						
\int_{\om_\tau} (B\bar p(x) +& \lambda \bar u(x))v(x)\,dx \nonumber\\
&= \frac{1}{\norm{\bar u_h -\bar u}}\lim_{h \arrow 0} \int_{\om_\tau} (B\bar p_h(x) +\lambda \bar u_h(x))({ \bar u_h(x) -\bar u(x)})\,dx \nonumber \\
&=  \frac{1}{\norm{\bar u_h -\bar u}}\lim_{h \arrow 0} \l[\int_{\om_\tau} (B\bar p_h(x) +\lambda \bar u_h(x))( \bar u_h(x) -\Pi_h\bar u(x))\,dx \r. \nonumber \\
&\qquad\qquad\qquad \qquad \l.+  \int_{\om_\tau} (B\bar p_h(x) +\lambda \bar u_h(x))( \Pi_h\bar u(x)-\bar u(x))\,dx \r]. \nonumber
 \end{align}
The first integral on the right-hand side of the last identity is less or equal than 0 by the first order necessary optimality conditions \eqref{eq:var_ineqh}; while the second integral is equal to 0 by noticing that the optimal control is active on $\om_\tau$, i.e. $\bar u(x)=\alpha$ or $\bar u(x)=\beta$ and thus $\Pi_h \bar u(x) - \bar u(x) = 0$. Since $|B\bar p(x) + \lambda \bar u(x)|>\tau>0$ on $\om_\tau$, we apply second the order sufficient condition of Theorem \ref{t:ssc} and by the repeating the arguments in the proof of Lemma \ref{l:left_est}, we can deduce the existence of an $h_0>0$ and $\mu>0$, such that
\begin{align}\label{eq:left_est2}
\mu \norm{\bar u_{h}-\bar u }^2 \leq
&[\mathcal{L'}(y(\bar u_{h}),\bar u_{h}, p(\bar u_{h}))-\mathcal{L'}(\bar y,\bar u, \bar p)](y(\bar u_{h})-\bar y,\bar u_{h}  - \bar u), \nonumber \\
& \leq c_{1} (\norm{\bar u -\bar u_h}+h^2) \norm{\Pi_{h} \bar u- \bar u } +  (\lambda \bar u + B \bar p,\Pi_{h} \bar u- \bar u )+c_{2}h^{2}\norm{ \bar u_{h}-\bar u}. 
\end{align}
Now, by applying the Young's inequality and taking into account Lemma \ref{l:improved.0} and Lemma \ref{l:improved.1}, we finally deduce that
\begin{align}\label{eq:improved_restult}
\frac{\mu}{2} \norm{\bar u_{h}-\bar u }^2
& \leq c_2 h^4+c_3\norm{\Pi_{h}\bar u- \bar u }^2 +  (\lambda \bar u + B \bar p,\Pi_{h} \bar u- \bar u ) \nonumber \\
& \leq c_2 h^4 +c_4 h^3 + ch^3, 
\end{align}
which implies the error estimate \eqref{eq:estimate_improved}
\end{proof}

\section{Numerical experiments}
For the sake of illustration of our theory, we develop a numerical test where the exact solution of the optimal control problem is known. The optimization problem is solved by a BFGS method, which stops when the norm of  the residual $u_{k+1}-u_k$  is less than the tolerance of $1e-7$.  Our example reads as follows
\begin{subequations}\label{eq:example}
\begin{empheq}[left={\mathbf{(E)} \empheqlbrace}]{align}
\displaystyle &\min_{(y,u)\in H_0^1(0,1) \times U_{ad}} J({y,u})=\frac12\norm{ y-y_d}^2 +\frac\lambda2 \norm{u - u_d}^2\\
&\text{subject to: } \nonumber\\
&\begin{array}{l} \label{eq:burgers}
-\nu  y'' +yy' =u +f \quad \text{in } (0,1), \\
y(0)=y(1)=0,
\end{array}
\end{empheq}
\end{subequations}
where:
\begin{subequations}\label{eq:example.1}
\begin{align}
\nu &= 0.78, \nonumber\\
u_a &= -1, \quad u_b =1 \nonumber \\
y_d(x) &= -x (x -1), \nonumber\\
u_d(x) &=
\left\{
\begin{array}{ll}
1 & 0 \leq x <1/3, \\
- 6x +3 & 1/3 \leq x \leq 2/3, \\
-1 & 2/3 < x <1, 
\end{array}
\right.
\nonumber\\
f(x)&=-u_d + x(x-1)(2x-1)+2\nu.\nonumber
\end{align}
\end{subequations}
With these choices, problem $(E)$ has the optimal control $\bar u = u_d$ with associated optimal state $\bar y = y_d$, and adjoint state $\bar p =0$ together satisfying the optimality conditions stablished in Theorem \ref{t:fonc}. Note that since $\bar p$ vanishes, the optimal quantities also satisfy the second order optimality condition \eqref{eq:ssc}. Figure \ref{fig:opt_pair} shows the computed optimal control $\bar u$ and its associated optimal state $\bar y$ at $h=0.0244$.

In the next table we estimate numerically the order of convergence in the $L^2$--norm (EOC). From the numerical results, it can be observed a quadratic order of convergence for the optimal control values of $\lambda$ close to 1, but this order is lower if the value of $\lambda$ decreases. It should be notice that our control satisfies Assumption \ref{eq:A2}.
\begin{table}[h!]
\begin{tabular}{ccc}
$h$       & Error in $L^2$ & EOC \\
\hline
\hline
0.0476 & 0.0111045 &  - \\\hline
0.0244 & 0.0031017& 1.91 \\\hline
0.0123 & 0.0008099 & 1.94\\\hline
0.0062 & 0.0002152 & 1.95\\\hline
0.0031 & 0.0000656 & 1.94\\ \hline
0.0016 & 0.0000300 & 1.87\\ \hline
0.0008 & 0.0000190 & 1.71\\ \hline
\end{tabular}
\caption{Numeric computation of the errors and the order of convergence}
\end{table}

The numerical approximations can be observed in the following figure. Note that $\norm{\bar u} = \sqrt{7}/3 $; hence,  the condition $\norm{\bar u} \leq \nu^2$ is satisfied.
\begin{figure}[h]
        \centering
        \begin{subfigure}{0.5\textwidth}
                \centering
                \includegraphics[scale=0.35]{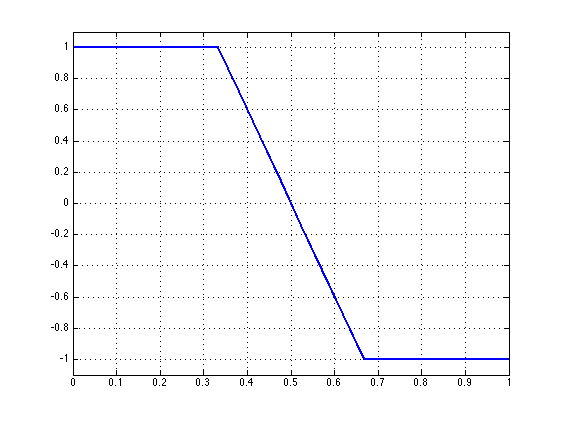}
                \caption{Optimal control}
                \label{fig:exp2_01}
        \end{subfigure}%
        ~ 
        \begin{subfigure}[H]{0.5\textwidth}
                \centering
                \includegraphics[scale=0.35]{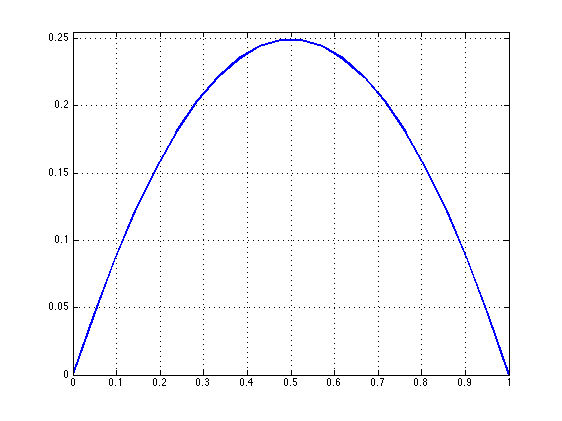}
                \caption{Optimal state}
                \label{fig:exp2_02}
        \end{subfigure}
 \caption{Approximation of the optimal pair $(\bar y,\bar u)$, for $\lambda=0.1$}     
  \label{fig:opt_pair}
\end{figure}

\bibliographystyle{plain}

\end{document}